\documentclass[11pt]{amsart}%{article}

\usepackage{xcolor}

\usepackage{amsfonts}
\usepackage{amsmath}
\usepackage{amssymb}

\newcommand{\R}{\mathbb R}
\newcommand{\N}{\mathbb N}

\newcommand{\HH}{\mathcal{H}}

\newcommand{\Rn}{\mathbb R^{n}}

\newtheorem{theorem}{Theorem}[section]
\newtheorem{lemma}[theorem]{Lemma}
\newtheorem{prop}[theorem]{Proposition}
\newtheorem{coro}[theorem]{Corollary}
\newtheorem{claim}[theorem]{Claim}
\newtheorem{definition}[theorem]{Definition}
\newtheorem{example}[theorem]{Example}

\title{Centro-sectional measures for log-concave functions}
\author{K\'aroly J. B\"or\"oczky}
\address{Alfr\'ed R\'enyi Institute of Mathematics, Hungarian Academy
  of Sciences, Re\'altanoda u. 13-15, H-1053 Budapest, Hungary, and
Institute of Mathematics, E\"otv\"os University, P\'azm\'any P\'eter s\'et\'any 1/c, H-1117, Budapest, Hungary}
\email{boroczky.karoly.j@renyi.hu}

\author{Jinrong Hu}
\address{Institut f\"{u}r Diskrete Mathematik und Geometrie, Technische Universit\"{a}t Wien, Wiedner Hauptstrasse 8-10, 1040 Wien, Austria}
\email{jinrong.hu@tuwien.ac.at}

\author{Jiaqian Liu}
\address{School of Mathematics and statistics, Henan University, Jinming Avenue, 475001, Kaifeng, China}
\email{liujiaqian@henu.edu.cn}

\thanks{2020 \emph{Mathematics Subject Classification}: 52A40 (52A38)\\
\emph{Keywords}: Log-concave functions; Centro-sectional measures}

\begin{document}

\begin{abstract}
We introduce centro-sectional measures with parameters $q,m$ for log-concave functions on $\mathbb{R}^n$, defined in terms of the $q$-th moments of their Radon transforms with respect to the Haar measure on $m$-dimensional subspaces, where $m=1,\ldots,n-1$, and establish the corresponding variational formulas. Our measures generalize the notion of dual curvature measure if $q=1$, and are related to the Sine transform if $q=2$. In line with the coarea formula for log-concave functions as BV functions, the variational formulas give rise to the Euclidean centro-sectional measures and the spherical centro-sectional measures. In the symmetric setting, we solve the associated even functional centro-sectional Minkowski problem, which asks which pairs of measures can arise as the centro-sectional measures of an even log-concave function.

\end{abstract}
\maketitle

\section{Introduction}

A recurring theme in convex geometry is to recover a convex body from geometric information associated with it. Such reconstruction problems, commonly referred to as Minkowski-type problems, occupy a central position in the field and have stimulated extensive developments over the past century. 
The geometric data appearing in Minkowski-type problems are typically geometric measures obtained from natural geometric invariants. In the classical Brunn--Minkowski theory, the fundamental invariants are the quermassintegrals, whose variational formulas give rise to the area measures of Aleksandrov, Fenchel, and Jessen and the curvature measures introduced by Federer. 
The prototype of these reconstruction problems is the classical Minkowski problem for the surface area measure, solved by Minkowski \cite{M897,M903} in the discrete and absolute continuous case around 1900, and later by  Aleksandrov  \cite{A39,A42} in general. Regularity of the solution has been intensively investigated in the second half of the 20th century; see, for example, \cite{Caf90a,Caf90b,CY76,FJ38,N53,P52}. 

A dual picture emerged with Lutwak's introduction of the dual Brunn--Minkowski theory in the 1970s \cite{L75}. Recently, for $q\in\R$, Huang, Lutwak, Yang, and Zhang \cite{HLYZ16} introduced the dual curvature measure $\widetilde{C}_q(K,\cdot)$ through the variational theory of the dual intrinsic volumes $\widetilde{V}_q(K)$ (equivalently, the corresponding dual quermassintegrals) of a convex body $K\subset\Rn$ with $o\in{\rm int}\,K$. 
These measures naturally extend the role played by Federer's curvature measures in the classical theory and lead to the formulation of the dual Minkowski problem by \cite{HLYZ16}, and their work has initiated a rapidly growing research direction; see \cite{BF19,BLYZ19,BLYZ20,CL18,HJ19,HZ18,LSW20,Z17,Z18}. We note that for $q\neq0$ and an $o$-symmetric convex body $K\subset\Rn$, the dual intrinsic volume is 
$$
\widetilde{V}_q(K)=\frac{\omega_n}{2^q}\int_{{\rm G}(n,1)}{
\rm V}_1(K\cap\xi)^q\,d\nu_1(\xi),
$$
where  ${\rm G}(n,m)$ is the Grassmannian of linear $m$-planes of $\Rn$ equipped with  the Haar probability measure $\nu_{m}$, $m=1,\ldots,n-1$, ${
\rm V}_m$ stands for the $m$-dimensional volume, and $\omega_n={
\rm V}_n(B^n)$ for the Euclidean centered unit ball $B^n\subset\R^n$. A key property of the  celebrated dual curvature measure $\widetilde{C}_q(K,\cdot)$ on $S^{n-1}$ introduced by \cite{HLYZ16} is the variational formula that if $q\neq 0$ and $L\subset \R^n$ is any convex body, then
\begin{equation}
\label{tildeV-tildeC}
\lim_{t\to 0^+}\frac{\widetilde{V}_q(K+tL)-\widetilde{V}_q(K)}t=q\int_{S^{n-1}}\frac{h_L}{h_K}\,d\widetilde{C}_q(K,\cdot).
\end{equation}

A similar notion originating from integral geometry is the ${\rm SL}(n)$ invariant dual affine quermassintegrals $\widetilde{\Phi}_{n-m}(K)$ of $K$, $m=1,\ldots,n-1$, that were proposed by Lutwak in the 1980s (see \cite[P. 515]{S14}) and whose main properties were established by Grinberg \cite{Gri91},
Busemann, Straus \cite{BuS60} and Gardner \cite{Gar07} (see also
Milman, Yehudayoff \cite{MiY23}), and are defined as 
$$
\widetilde{\Phi}_{n-m}(K)= \frac{\omega_n}{\omega_m}\left(\int_{{\rm G}(n,m)}{\rm V}_m(\xi\cap K)^n\,d\nu_{m}(\xi)\right)^{\frac1n}.
$$

A common generalization of these notions have been recently proposed by Cai, Leng, Wu, and Xi \cite{CLWX26}, considering a two-parameter family of centro-sectional functionals $\widetilde{\Psi}_{m,q}(K)$, $q\in\R$, $m=1,\ldots,n-1$, of a convex body $K\subset\R^n$ with $o\in{\rm int}\,K$; namely,
\begin{align*}
\widetilde{\Psi}_{m,q}(K)=&\int_{{\rm G}(n,m)}{\rm V}_m(\xi\cap K)^q\,d\nu_{m}(\xi),&&q\neq 0,\\
\widetilde{\Psi}_{m,0}(K)=&\int_{{\rm G}(n,m)}\log ({\rm V}_m(\xi\cap K))\,d\nu_{m}(\xi),&&q=0.
\end{align*}
 This notion includes dual quermassintegral and dual affine quermassintegral as special cases, namely, when $q=1$,  then $\widetilde{\Psi}_{m,1}(K)=\frac{\omega_m}{\omega_n}\widetilde{V}_{m}(K)$; and when $q=n$, then $\widetilde{\Psi}_{m,n}(K)=(\omega_m/\omega_n)^n\widetilde{\Phi}_{n-m}(K)^{n}$.
The paper \cite{CLWX26} established the property that  the differentials of $\widetilde{\Psi}_{m,q}(\cdot)$ lead to the variational centro-sectional measures $\widetilde{A}_{m,q}(K,\cdot)$ on $S^{n-1}$ analogously to \eqref{tildeV-tildeC}. 

The centro-sectional measures provide a unified framework encompassing dual curvature measures and their affine analogues. More precisely, when $q=1$, then $\widetilde{A}_{m,1}(K,\cdot)=\frac{\omega_m}{\omega_n}\widetilde{C}_{m}(K,\cdot)$; and when $q=n$, then $\widetilde{A}_{m,n}(K,\omega)$, is the affine dual curvature measure based on the notion of dual affine quermassintegral $\widetilde{\Phi}_{n-m}(K)$ (see \cite{CLWX25}).
The even Minkowski problem associated with centro-sectional measures was also solved in \cite{CLWX26}. More recently,  \cite{BLY26+} extended these results to the $L_p$ setting.

The purpose of the present paper is to extend this centro-sectional theory from convex bodies to log-concave functions. Such a passage is natural in functional convex geometry, where convex bodies are embedded into the class of log-concave functions through their characteristic functions, see for example Colesanti, Fragal\`a \cite{CF13}, Cordero-Erausquin, Klartag \cite{CK15} and Falah, Rotem \cite{FR26} in the case of the functional Minkowski problem.  Recently, Xi, Zhao \cite{XZ26} introduced fractional integral affine surface areas and the associated fractional affine area measures through a  variational framework, and studied the corresponding Minkowski problems. While their construction is based on directional chord data, rather than centro-section, it illustrates a closely related principle: nonlocal geometric measures can be obtained as first variations of integral-geometric functionals. This perspective is also central to the functional centro-sectional theory developed here.
Our resulting theory generalizes the 
theory of dual curvature measures for log-concave functions introduced by Huang, Liu, Xi, and Zhao \cite{HLXZ24}; see also \cite{FXY22,FYZZ25,R23,U25}. We recall  that a function $f:\Rn\to[0,\infty)$ is log-concave if $f((1-\lambda)x+\lambda y)\geq f(x)^{1-\lambda}f(y)^\lambda$ for any $x,y\in\Rn$ and $\lambda\in[0,1]$; or equivalently, $f=e^{-\varphi}$ for a convex function $\varphi:\Rn\to(-\infty,\infty]$. The domain of such $f$ is the convex set $D_f=\{f>0\}$ which has non-empty interior if $\int_{\R^n} f>0$. 

Let $m\in\{1,\ldots,n-1\}$. For a log-concave $f$ with $0<\int_{\Rn} f<\infty$ and $o\in{\rm int}\,D_f$,  the $m$-dimensional Radon transform ${\rm R}_m f\in L_\infty({\rm G}(n,m))$  is defined by
 \begin{equation*}\label{Rm}
{\rm R}_m f(\xi)=\int_{\xi}f(x)d\HH^m(x)=\|f|_{\xi}\|_1
 \end{equation*} 
for $\xi\in {\rm G}(n,m)$ (see \eqref{log-concave-f-xi-A0A1} for finiteness).  We note that in this case, ${\rm R}_m f$ is a continuous function of $\xi\in {\rm G}(n,m)$ (see  Lemma~\ref{fxi-representations}). We also mention that when $m=n$, ${\rm R}_n f$ coincides with the functional volume of $f$; see Falah-Rotem \cite{FR26}.
Tziotziou \cite{T26} extended the dual affine intrinsic volume $\widetilde{\Psi}_{m,n}(K)$ of a convex body $K$ to a log-concave function $f$ on $\Rn$ as
$$
\widetilde{\Psi}_{m,n}(f)=\int_{{\rm G}(n,m)}({\rm R}_mf(\xi))^nd\nu_{m}(\xi).
$$
For $q\in\R$, motivated by the work of Cai, Leng, Wu, Xi \cite{CLWX26}, we consider the $q$-th dual centro-sectional intrinsic volume
\begin{align*}
\widetilde{\Psi}_{m,q}(f)=&\int_{{\rm G}(n,m)}({\rm R}_mf(\xi))^q\,d\nu_{m}(\xi),&& q\neq 0,\\
\widetilde{\Psi}_{m,0}(f)=&\int_{{\rm G}(n,m)}\log ({\rm R}_mf(\xi))\,d\nu_{m}(\xi),&&q=0.
\end{align*}
In particular, if $f=\mathbf{1}_K$ for a convex body $K\subset\R^n$ with $o\in{\rm int}\,K$, then $\widetilde{\Psi}_{m,q}(f)=\widetilde{\Psi}_{m,q}(K)$.

The sum operation on the space of log-concave functions is sup-convolution. For log-concave functions $f, g$ on $\Rn$ and $t>0$, the sup-convolution $f\oplus t\cdot g$ is the upper semicontinuous log-concave function
\begin{equation*}
	 ( f\oplus t\cdot g )  (z)= \sup_{x+t y=z} f(x)g(y)^t.
\end{equation*}
We recall (see Section~\ref{secConvexFunctions}) that for any  function $\psi:\Rn\to(-\infty,\infty]$, its Legendre transform $\psi^*:\Rn\to(-\infty,\infty]$ is defined by the formula
\begin{equation*}
 \label{Legendre-def}
\psi^*(x)=\sup_{y\in\Rn}x\cdot y-\psi(y), 
\end{equation*}
that is a lower semicontinuous convex function.  If $\psi$ itself is a lower continuous convex function, then $(\psi^*)^*=\psi$.
We note that if $f=e^{-\varphi}$ and $g=e^{-\psi}$ are upper semicontinuous log-concave functions on $\R^n$, then $f\oplus t\cdot g=e^{-(\varphi^*+t\psi^*)^*}$.

In the case of Minkowski-type problems for log-concave functions, two measures are associated to a log-concave function $f$, one on $\R^n$ and one on $S^{n-1}$ (see, for example, Colesanti, Fragal\`a \cite{CF13}, Falah, Rotem \cite{FR26} or Huang, Liu, Xi, Zhao \cite{HLXZ24}). The main explanation is the coarea formula for functions of bounded variations as it is discussed in Section~\ref{secBasic-Estimates} (see  
\eqref{coarea-BV-eq} for the special case we need).
Motivated by the Brunn-Minkowski theory of convex bodies,
it is natural to consider the problem when the limit
\begin{equation}\label{eq limit}
\delta_{m,q}(f,g)=\lim_{t\rightarrow 0^+}\frac{\widetilde{\Psi}_{m,q}(f\oplus t\cdot g)-\widetilde{\Psi}_{m,q}(f)}{t}
\end{equation}
exists, and to find a representation of it as an integral with respect to a variational measure based on $\widetilde{\Psi}_{m,q}(f)$. Our first result Theorem~\ref{Amq-pair-measures} solves this problem.

\begin{theorem}
\label{Amq-pair-measures}
Let $q\in \R$,  and $m\in\{1,\ldots,n-1\}$. Let $f=e^{-\varphi}$ be an  upper semicontinuous  log-concave function  on $\Rn$ such that $0<\int_{\Rn} f<\infty$, $\sup f=f(o)$ and $o\in{\rm int}\,D_f$. There exist finite Borel measures $\widetilde{A}_{m,q}^{e}(f,\cdot)$ on $\R^n$ and $\widetilde{A}_{m,q}^{s}(f,\cdot)$ on $S^{n-1}$ such that for any compactly supported  upper semicontinuous log-concave function   $g=e^{-\psi}$ with $g(o)>0$, if $q\neq 0$, then
 \begin{align}
 \label{Amq-pair-measures-eq}
 \delta_{m,q}(f,g) = &q\int_{\Rn} \psi^{*}(x)d\widetilde{A}^{e}_{m,q}(f,x)+ q\int_{S^{n-1}} h_{D_g}(u)d\widetilde{A}^{s}_{m,q}(f,u),\\
 \label{Amq-pair-measures-eq2}
 \delta_{m,0}(f,g) = &\int_{\Rn} \psi^{*}(x)d\widetilde{A}^{e}_{m,0}(f,x)+ \int_{S^{n-1}} h_{D_g}(u)d\widetilde{A}^{s}_{m,0}(f,u).
 \end{align}
 \end{theorem}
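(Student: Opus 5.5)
The plan is to reduce everything to the first variation of each fibre functional ${\rm R}_m(f\oplus t\cdot g)(\xi)$ for a fixed $\xi\in{\rm G}(n,m)$, and then integrate over ${\rm G}(n,m)$ by dominated convergence.

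\emph{Step 1: variation of a single section.} Fix $\xi$. The restriction $f|_\xi=e^{-\varphi|_\xi}$ is a log-concave function on the $m$-dimensional space $\xi$, with $o$ in the relative interior of its domain and maximum at $o$. Along $\xi$, $(f\oplus t\cdot g)|_\xi$ is not simply $f|_\xi\oplus t\cdot g|_\xi$. Instead, on the Legendre side, $(\varphi^*+t\psi^*)^*$ restricted to $\xi$ has Legendre transform (in $\xi$) equal to the infimal projection of $\varphi^*+t\psi^*$ onto $\xi$.

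The cleaner route is the known first variation formula for the functional volume, applied inside $\xi$ as in Colesanti--Fragal\`a and Huang--Liu--Xi--Zhao \cite{HLXZ24}. Write
$$\int_\xi e^{-(\varphi^*+t\psi^*)^*}\,d\HH^m$$
and use the change of variables $x=\nabla\varphi(y)$. By Alexandrov differentiability and the fact that $\nabla(\varphi^*+t\psi^*)^*$ is the inverse of $\nabla\varphi^*+t\nabla\psi^*$, differentiating at $t=0$ gives
$$\frac{d}{dt}\Big|_{t=0^+}{\rm R}_m(f\oplus t g)(\xi)=\int_\xi \psi^*(\nabla\varphi(x))f(x)\,d\HH^m(x)+\int_{\partial (D_f\cap\xi)} h_{D_g}(\nu_{D_f}(x))f(x)\,\rho_\xi(x)\,d\HH^{m-1}(x).$$
Here $\rho_\xi$ is the Jacobian factor relating the normal of $D_f\cap\xi$ within $\xi$ to the normal $\nu_{D_f}$ of $D_f$. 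The boundary term arises from the coarea/BV formula \eqref{coarea-BV-eq}: the jump of $f$ across $\partial D_f$ contributes the support function of the domain of $g$, since the domain of $f\oplus tg$ is $D_f+tD_g$.

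\emph{Step 2: integrate over the Grassmannian.} By the chain rule,
$$\delta_{m,q}=q\int_{{\rm G}(n,m)}{\rm R}_mf(\xi)^{q-1}\,\frac{d}{dt}{\rm R}_m(\cdot)(\xi)\,d\nu_m(\xi),$$
with ${\rm R}_m f^{-1}$ and no factor $q$ when $q=0$. Pushing the inner measures forward defines
$$\widetilde A^e_{m,q}(f,\cdot)=\int_{{\rm G}(n,m)}{\rm R}_mf(\xi)^{q-1}\,(\nabla\varphi)_\#\bigl(f\,\HH^m|_\xi\bigr)\,d\nu_m(\xi)$$
on $\R^n$, and $\widetilde A^s_{m,q}$ as the analogous push-forward under $\nu_{D_f}$ of the boundary measures on $S^{n-1}$. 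Through the polar-coordinates identity $\int_{{\rm G}(n,m)}\int_\xi F\,d\HH^m\,d\nu_m=c\int_{\R^n}F(x)|x|^{m-n}\,dx$, the Euclidean measure has density $f(x)|x|^{m-n}$ against the relevant Grassmannian weight. Finiteness of both measures follows from continuity and positivity of ${\rm R}_mf$ on the compact ${\rm G}(n,m)$ (Lemma~\ref{fxi-representations}), which bounds ${\rm R}_mf^{q-1}$ above and below. It also uses the exponential decay bound \eqref{log-concave-f-xi-A0A1}, the boundedness of $\psi^*$ from below by $-\psi(o)$, and the fact that $\psi^*\le h_{D_g}+c$ grows at most linearly because $g$ is compactly supported.

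\emph{Main obstacle.} The hard part is justifying the exchange of limit and integral: I need a bound $|{\rm R}_m(f\oplus tg)(\xi)-{\rm R}_mf(\xi)|\le Ct$ uniform in $\xi$ and in small $t$. I would get it by sandwiching. Since $g(o)>0$ and $g\le \|g\|_\infty \mathbf 1_{RB^n}$, we have
$$f\,g(o)^t\le f\oplus tg\le \|g\|_\infty^t\,(f\oplus t\mathbf 1_{RB^n}),$$
and $f\oplus t\mathbf 1_{RB^n}(z)=\sup_{|z-x|\le tR}f(x)$. Its section integral differs from ${\rm R}_mf(\xi)$ by $O(t)$ uniformly in $\xi$, using the uniform exponential decay $f(x)\le A_0e^{-A_1|x|}$ together with the Lipschitz-type control of $\varphi$ near $o$.

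A secondary difficulty is the boundary term in Step 1 when $D_f\cap\xi$ is not compact or $\partial D_f$ is irregular. I would handle it first for $g=\mathbf 1_{L}$-type perturbations via Aleksandrov's variational lemma for sections, and then combine with the Euclidean part using the splitting $\psi^*=(\psi^*-h_{D_g})+h_{D_g}$, as in \cite{HLXZ24}.
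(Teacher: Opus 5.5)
Your route differs from the paper's: you differentiate each section ${\rm R}_m(f\oplus t\cdot g)(\xi)$ and then integrate over ${\rm G}(n,m)$ by dominated convergence. There is a genuine gap, because Step~1, which carries the whole content of the theorem, is not established, and the tools you cite do not give it. The change of variables $x=\nabla\varphi(y)$ with $\nabla(\varphi^*+t\psi^*)^*=(\nabla\varphi^*+t\nabla\psi^*)^{-1}$ is the Colesanti--Fragal\`a argument. It needs second-order regularity of $\varphi^*$ and differentiability of $\psi^*$. This fails already for $g=c\mathbf 1_L$, where $\psi^*=h_L+\log c$, and the theorem assumes nothing about $f$ beyond upper semicontinuity and log-concavity. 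It is also a change of variables on $\R^n$: since $(f\oplus t\cdot g)|_\xi\neq f|_\xi\oplus t\cdot g|_\xi$, as you note, no functional-volume variation formula can be applied inside $\xi$. Lemma~\ref{limt-variation-diffphi} gives the Euclidean integrand pointwise. Near $\partial D_f$, however, $f$ may jump and $\varphi$ need not be Lipschitz, so the quotients $(f_t-f)/t$ have no integrable majorant there, and that is exactly where the spherical term is produced. If you pass to level sets for $g=\mathbf 1_L$, using $\{f_t\geq s\}=F_s+tL$, you must exchange $t\to0^+$ with $\int_0^{\sup f}ds$. For $L\subset\sigma B^n$ the natural majorant $\sigma\varrho_{F_s}(u)/r_s$ of the radial quotients blows up as $s\to\sup f$, because the inradius $r_s$ of $F_s$ about $o$ can shrink like $\sup f-s$ (for instance when $f$ attains its maximum on a segment). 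An Aleksandrov-type lemma for a single body does not address this, since the issue is uniformity in $s$. The paper never differentiates a single section. It integrates over ${\rm G}(n,m)$ first via \eqref{Rm-dualRm-sphere}, so \eqref{HLYZ-radial-der} is needed only for $\HH^{n-1}$-a.e.\ $u\in S^{n-1}$. Levels $s\leq M-\eta$ are dominated using the inradius $\gamma_0\eta$, and the top levels are controlled by Lemma~\ref{tildeV-theta-parallelset}, the $\|x\|^{m-n}$-weighted estimate on $F\Delta(F\oplus t\cdot L)$, with logarithmic refinements for $m=1$.

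The passage to general $g$ is also missing. The map $\psi^*\mapsto(\varphi^*+t\psi^*)^*$ is nonlinear, so the splitting $\psi^*=(\psi^*-h_{D_g})+h_{D_g}$ does not let you add two separately computed derivatives. The paper instead sandwiches $\frac1k\mathbf 1_{L_k}\leq g\leq C\mathbf 1_{{\rm cl}\,D_g}$ and uses the formula for $c\mathbf 1_L$ (Corollary~\ref{variation-phi+tchL}). It then applies Fatou's lemma with Lemma~\ref{limt-variation-diffphi} to the nonnegative differences $\tilde f_t-f_t$ and $f_t-\hat f_{k,t}$. This gives an upper bound with $h_{{\rm cl}\,D_g}$ and a lower bound with $h_{L_k}$, which meet as $L_k\to{\rm cl}\,D_g$. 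By contrast, what you call the main obstacle is the easy part, although exponential decay plus Lipschitz control near $o$ does not by itself control $f$ near $\partial D_f$. The right reason is log-concavity. Suppose $r_0B^n\subset\{f\geq s_0\}$, $\|w\|\leq tR$ and $\epsilon=tR/r_0$. Then $(1-\epsilon)z=(1-\epsilon)(z+w)+\epsilon y$ with $y=-\frac{1-\epsilon}{\epsilon}w\in r_0B^n$. Hence $f(z+w)\leq\kappa^{\epsilon/(1-\epsilon)}f((1-\epsilon)z)$ with $\kappa=\max\{1,\sup f\}/s_0$, and so ${\rm R}_m(f\oplus t\cdot\mathbf 1_{RB^n})(\xi)\leq(1-\epsilon)^{-m}\kappa^{\epsilon/(1-\epsilon)}{\rm R}_mf(\xi)$ uniformly in $\xi$. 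Moreover, $t\mapsto\varrho_{F_s+tL}(u)$ is concave for $t\geq0$, so the radial quotients increase as $t\downarrow0$. Combined with the bound above, this shows that the right derivatives $D_s(u)$ at $t=0$ satisfy $\int_0^{\sup f}D_s(u)\,ds\leq C$ uniformly in $u$. That would supply the missing majorant for one-sided limits with $g=c\mathbf 1_L$. As written, however, your argument rests on a per-section variation formula that is asserted rather than proved.
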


Note that $h_{D_g}$ is the so-called recession function of $\psi^*$ in Theorem~\ref{Amq-pair-measures} (cf. \eqref{barpsi-dompsi-bounded} in Section~\ref{secConvexFunctions}), and the support of $g$ might be lower dimensional in Theorem~\ref{Amq-pair-measures}.  

Let us describe the centro-sectional measures $\widetilde{A}_{m,q}^{e}(f,\cdot)$ on $\R^n$ and $\widetilde{A}_{m,q}^{s}(f,\cdot)$ on $S^{n-1}$ of a log-concave function occurring in Theorem~\ref{Amq-pair-measures}, where the definition makes sense even if the maximum is not attained at the origin (see Section~\ref{secCoarea} for details).  
We note that given a closed convex set $K\subset\R^n$ with ${\rm int}\,K\neq \emptyset$, 
 there exists a unique exterior normal $u_K(x)\in S^{n-1}$ to $K$ at $x$ for $\HH^{n-1}$ a.e. $x\in\partial K$, and
the $m$-dimensional spherical dual Radon transform $\widetilde{\mathcal{R}}^*_m$ occurring in Definition~\ref{tildeAdef} is defined in Section~\ref{secPreliminaries}.

\begin{definition} 
\label{tildeAdef}
Let $q \in \R$ and $m\in\{1,\ldots,n-1\}$. Let $f=e^{-\varphi}$ be an upper semicontinuous log-concave function  on $\Rn$ such that $0<\int_{\Rn} f<\infty$ and  $o\in{\rm int}\,D_f$. For the associated centro-sectional variational Borel measures $\widetilde{A}_{m,q}^{e}(f,\cdot)$ on $\R^n$ and $\widetilde{A}_{m,q}^{s}(f,\cdot)$ on $S^{n-1}$, if $\Omega\subset\R^n$ and $\omega\subset S^{n-1}$ are Borel sets, then
\begin{align*}
\widetilde{A}_{m,q}^{e}(f,\Omega)
:=&\int_{\{x\neq o:\nabla\varphi(x)\in\Omega\}} \|x\|^{m-n}f(x) \left(\widetilde{\mathcal{R}}^*_m({\rm R}_mf)^{q-1}\right)\left(\frac{x}{\|x\|}\right)\,dx,\\
\widetilde{A}_{m,q}^{s}(f,\omega):=
&\int_{\{x\in \partial'D_f:u_{D_f}(x)\in\omega\}}\|x\|^{m-n}f(x) \left(\widetilde{\mathcal{R}}^*_m({\rm R}_mf)^{q-1}\right)\left(\frac{x}{\|x\|}\right)\,dx.
\end{align*}
\end{definition}

\noindent\textbf{The functional centro-sectional Minkowski problem.} Let $q\in \R$  and $m\in\{1,\ldots,n-1\}$. For non-trivial Borel measures $\mu$  on $\R^n$  and $\nu$ on $S^{n-1}$, find the necessary and sufficient conditions on $\mu$ and $\nu$ so that there exists  an upper semicontinuous log-concave function $f$ on $\Rn$ with $0<\int_{\Rn} f<\infty$ and $o\in{\rm int}\,D_f$ such that 
\begin{equation*}
\label{Func-cen-MP}
\mu=\widetilde{A}_{m,q}^{e}(f,\cdot) \mbox{ \ and \ } \nu=\widetilde{A}_{m,q}^{s}(f,\cdot).
\end{equation*}

If the measure $\mu$ has a density $g\in L_1(\Rn)$, then the equation $ \mu=\widetilde{A}_{m,q}^{e}(f,\cdot)$ is equivalent  to the Monge-Amp\`ere equation
\begin{align}
\label{Amqp-Monge-Ampere}
 \left(\widetilde{\mathcal{R}}^*_m\left({\rm R}_me^{-\varphi(x)}\right)^{q-1}\right)\left(\frac{x}{\|x\|}\right)||x||^{m-n}e^{-\varphi(x)}={\rm det} (\nabla^2 \varphi(x))\cdot g(\nabla\varphi(x)).
\end{align}

It is worth mentioning that for $q=2$, $m\in\{2,\ldots,n-1\}$,  by a direct computation, the nonlocal term in the functional
centro-sectional Monge-Amp\`ere equation is given by the
spherical $(m-n)$-Sine transform (see \cite{Rub13}) of the $m$-th power of
the radial function of the associated Keith Ball body $K_m(f)$ (cf. \eqref{Ball-body}), thus \eqref{Amqp-Monge-Ampere} reduces to 
\begin{equation*}
\frac{c_{n,m}e^{-\varphi(o)}}{m}\,
\left[
\mathcal{S}_{m-n}
\left(\varrho_{K_m(e^{-\varphi})}^m\right)
\right]
\left(\frac{x}{\|x\|}\right)
\|x\|^{m-n}e^{-\varphi(x)}
=
\det\bigl(\nabla^2\varphi(x)\bigr)\cdot 
g\bigl(\nabla\varphi(x)\bigr),
\label{eq:q2-functional-centro-sine}
\end{equation*}
where 
\[
c_{n,m}
=
\frac{\Gamma(n-1)}
     {(2\pi)^{\,n-m}\Gamma(m-1)},
\]
and 
$\mathcal{S}_{m-n}$ is the  spherical $(m-n)$-Sine transform defined for $h\in L_\infty(S^{n-1})$ by
\[
(\mathcal{S}_{m-n}h)(u)
=
\int_{S^{n-1}}
|\sin\angle(u,v)|^{m-n}
h(v)\,d\mathcal{H}^{n-1}(v), \qquad u\in S^{n-1}.
\]

A Borel measure $\mu$ on $\R^n$ is said to have finite first moment if $\int_{\R^n}|x|d\mu(x)<\infty$. Our other main result is the characterization of even functional centro-sectional measures.

\begin{theorem}
\label{Amq-even-Minkowski-pair-measures}
Let $q\in\R$ and $m\in\{1,\ldots,n-1\}$. Let $\mu$ and $\nu$ be even Borel measures on $\R^n$ and on $\mathbb{S}^{n-1}$, respectively.
There exists an even upper semicontinuous log-concave function $f$ on $\Rn$ with $0<\int_{\Rn} f<\infty$ such that
\begin{align*}
\mu=&\widetilde{A}_{m,q}^{e}(f,\cdot),\\
\nu= &\widetilde{A}_{m,q}^{s}(f,\cdot), 
\end{align*}
 if and only if the pair $(\mu,\nu)$ satisfies the following conditions:
 \begin{itemize}
\item $0<\mu(\mathbb{R}^n)<\infty $ if $q\neq 0$, and  $\mu(\mathbb{R}^n)=1$ if $q=0$.
 
 \item $\nu$ is finite and $\mu$ has finite first moment.
 
 \item The union of the supports of $\mu$ and $\nu$ is not contained in any hyperplane.
 \end{itemize}
% \begin{itemize}
% \item either $q\neq 0$ and  $0<\mu(\R^n)<\infty$,
% \item or $q=0$ and $\mu(\R^n)=1$.
% \end{itemize}
\end{theorem}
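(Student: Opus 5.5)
Necessity and sufficiency are treated separately. Sufficiency goes through a variational problem of Cordero-Erausquin--Klartag / Falah--Rotem type, combined with the first variation formula of Theorem~\ref{Amq-pair-measures}.

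\emph{Necessity.} Let $f=e^{-\varphi}$ be even. Evenness gives $\sup f=f(o)$ and $o\in{\rm int}\,D_f$. The measures are even because ${\rm R}_mf$ is even on ${\rm G}(n,m)$, $\widetilde{\mathcal{R}}^*_m$ preserves evenness, and $\nabla\varphi(-x)=-\nabla\varphi(x)$.

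Finiteness of $\mu$ and $\nu$ comes from the bound $\|x\|^{m-n}f(x)$ times a bounded function of $x/\|x\|$. Here $\widetilde{\mathcal{R}}^*_m({\rm R}_mf)^{q-1}$ is bounded because ${\rm R}_mf$ is continuous and positive on the compact Grassmannian (Lemma~\ref{fxi-representations}). Since $m-n>-n$ and $f$ decays exponentially, the density $\|x\|^{m-n}f(x)$ is integrable on $\R^n$ and on the boundary set.

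The first moment condition comes from the observation that $\|\nabla\varphi\|$ is $f$-integrable. This is the standard estimate $\int\|\nabla\varphi\|e^{-\varphi}<\infty$ for log-concave $f$, and it survives the singular weight $\|x\|^{m-n}$ near $o$ because $\nabla\varphi$ is bounded near the interior point $o$.

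For $q=0$, the total mass follows from Theorem~\ref{Amq-pair-measures} applied with $g=\mathbf 1_{B^n}$, or more simply from the homogeneity $\widetilde\Psi_{m,0}(e^{-\varphi+c})=\widetilde\Psi_{m,0}(f)+c$. This gives $\mu(\R^n)=1$, and $\mu(\R^n)>0$ in general.

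For the hyperplane condition, suppose both supports lie in $u^\perp$. Then $\nabla\varphi\cdot u=0$ a.e. and $u_{D_f}\perp u$ on $\partial' D_f$. Hence $f$ is invariant under translation in direction $u$, contradicting $\int f<\infty$.

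\emph{Sufficiency.} I would minimize
\[
\Phi(\varphi)=\int_{\R^n}\varphi^*\,d\mu+\int_{S^{n-1}}h_{\operatorname{dom}\varphi^*}\,d\nu
\]
over even convex $\varphi$ under the constraint $\widetilde\Psi_{m,q}(e^{-\varphi})=1$ when $q\neq0$. For $q=0$ the problem becomes unconstrained after adding $-\widetilde\Psi_{m,0}$, using $\mu(\R^n)=1$ so that adding a constant is neutral. Equivalently, one optimizes over even convex $\psi=\varphi^*$.

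The existence step proceeds as follows. Evenness and the non-hyperplane condition yield a lower bound
\[
\int\psi\,d\mu+\int h\,d\nu\ge c\,\big(\text{size of }\psi\big).
\]
The finite first moment makes $\Phi$ finite on test functions such as $\psi=\|x\|^2$ or $\|x\|$. Together these give compactness of minimizing sequences, using Blaschke-type selection for convex functions with lower semicontinuity of the $\nu$-term. One must also control that ${\rm R}_mf$ neither degenerates nor blows up, using the estimate \eqref{log-concave-f-xi-A0A1} and continuity of ${\rm R}_m$ under epi-convergence. This is the main obstacle: ruling out that minimizers concentrate on lower-dimensional sets or escape to infinity. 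I would first try to reduce it to the convex-body estimates of \cite{CLWX26}, applied to the Ball bodies $K_m(f)$ and level sets.

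Finally, perturb the minimizer by $\psi+t\psi_1$ with $\psi_1$ even, which corresponds to $f\oplus t\cdot g$. Theorem~\ref{Amq-pair-measures} then gives the Lagrange condition that $(\mu,\nu)$ equals $\lambda$ times the pair $(\widetilde A^e_{m,q},\widetilde A^s_{m,q})$. The density of such test functions, combined with evenness, turns this into equality of measures.

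The constant $\lambda$ is removed by rescaling $f\mapsto cf$. This multiplies the measures by $c^{q}$ for $q\neq0$ and leaves them unchanged for $q=0$, where $\lambda=1$ is forced by the normalization $\mu(\R^n)=1$.
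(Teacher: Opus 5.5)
Your necessity part and the overall architecture of the sufficiency part match the paper. The paper likewise minimizes $G(\varphi)=\int_{\R^n}\varphi^*\,d\mu+\int_{S^{n-1}}\overline{\varphi^*}\,d\nu$ over even $\varphi$ under a normalization of $\widetilde\Psi_{m,q}(e^{-\varphi})$, uses the Falah--Rotem linear lower bound, epi-compactness, and estimates on the Ball bodies $K_m(f)$.

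\textbf{The gap is the Lagrange-multiplier step as you state it.} Theorem~\ref{Amq-pair-measures} only gives the right derivative, as $t\to 0^+$, of $\widetilde\Psi_{m,q}(f\oplus t\cdot g)$. That covers perturbations $\varphi_0^*+t\psi_1$ with $t\ge 0$ and $\psi_1=\psi^*$ convex. Minimality then gives only the variational inequality
\[
\int_{\R^n}\psi_1\,d\mu+\int_{S^{n-1}}\bar\psi_1\,d\nu\geq \lambda\Big(\int_{\R^n}\psi_1\,d\widetilde A^e_{m,q}(f_0,\cdot)+\int_{S^{n-1}}\bar\psi_1\,d\widetilde A^s_{m,q}(f_0,\cdot)\Big)
\]
on a cone of convex functions. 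The direction $-\psi_1$ is not admissible: $f\oplus t\cdot g$ is undefined for $t<0$, and $\varphi_0^*-t\psi_1$ need not be convex. So no density argument turns this inequality into equality of measures.

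The paper instead perturbs by $\varphi_t=(\varphi_0^*+t\eta)^*+c_t$ for $t$ of \emph{both} signs. It uses $\eta=h_L$ with $L$ an $o$-symmetric convex body, and $\eta=\zeta\in C_c(\R^n)$ even. The latter is non-convex with vanishing recession function, and this is what separates $\mu$ from $\nu$. From $\psi^{**}\le\psi$ one gets $\varphi_t^*\le\varphi_0^*+t\eta-c_t$ and $\overline{\varphi_t^*}\le\overline{\varphi_0^*}+t\bar\eta$. Hence $G(\varphi_0)\le G(\varphi_t)\le\aleph_q(t)$ with equality at $t=0$, and so $\aleph_q'(0)=0$ provided $\aleph_q$ is differentiable at $0$.

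That differentiability is the two-sided limit $\lim_{t\to 0}$ of Propositions~\ref{variation-phi+thL} and \ref{variation-phi+tzeta}, which does not follow from Theorem~\ref{Amq-pair-measures}. For $\eta=h_L$, near $s=\sup f$ the level sets $F_s$ may shrink to $o$, and for $t<0$ the sets $F_s\oplus t\cdot L$ may even be empty. Dominated convergence fails in that layer. Controlling it is exactly the role of Lemma~\ref{tildeV-theta-parallelset}, including its logarithmic bound when $m=1$. For $\eta=\zeta$, Rotem's two-sided lemma is used.

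\textbf{Smaller points.}
\begin{itemize}
\item As written, $h_{{\rm dom}\,\varphi^*}$ is wrong and typically equals $+\infty$. The $\nu$-term must be the recession function $\overline{\varphi^*}=h_{{\rm dom}\,\varphi}=h_{D_f}$.
\item The test function $\varphi^*=\|x\|^2$ needs a finite second moment. Use $\varphi^*=h_{rB^n}$, i.e.\ $f=\mathbf 1_{rB^n}$, instead.
\item For $q=0$, taking $g=\mathbf 1_{B^n}$ alone does not isolate $\mu(\R^n)$. Either compare $g=c\mathbf 1_{B^n}$ with $g=\mathbf 1_{B^n}$, or compute $\widetilde A^e_{m,0}(f,\R^n)=\int({\rm R}_mf)^{-1}{\rm R}_mf\,d\nu_m=1$ via the duality formula, as the paper does.
\item Your nondegeneracy step is only announced. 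The concrete argument runs as follows. The normalization plus the uniform linear lower bound bounds $\varphi_{(k)}(o)$ from above. If $\{\varphi_{(k)}\le N+1\}$ is thin in a direction $u_k$, evenness makes it thin in both directions $\pm u_k$. Then $K_m(f_{(k)})$ satisfies $h(w_k)=h(-w_k)\to 0$. Lemma~\ref{pancake} then forces $\frac1q\log\widetilde\Psi_{m,q}(f_{(k)})\to-\infty$, or $\widetilde\Psi_{m,0}(f_{(k)})\to-\infty$ when $q=0$, contradicting the normalization. So evenness is used essentially here, not only for centering.
\item You rightly flag lower semicontinuity of $G$ under epi-convergence; the paper's existence argument leaves it implicit.
\end{itemize}
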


Let $f=e^{-\varphi}$ be an upper semicontinuous log-concave function  on $\Rn$ such that $0<\int_{\Rn} f<\infty$ and  $o\in{\rm int}\,D_f$. For $q>0$, Huang, Liu, Xi, Zhao \cite{HLXZ24} introduced the functional dual intrinsic volume
$$
\widetilde{V}_q(f)=\int_{\R^n}\|x\|^{q-n}f(x)\,dx
$$
that is always finite. We note that  
 the ``classical" dual intrinsic volume $\widetilde{V}_q(K)$ of a convex body $K\subset\R^n$ with $o\in{\rm int}\,K$ whose variational so-called dual curvature measures were introduced by Huang, Lutwak, Yang, Zhang \cite{HLYZ16}
 can be represented as $\widetilde{V}_q(K)=\frac{q}{n}
\widetilde{V}_q\left(\mathbf{1}_K\right)$.
The paper \cite{HLXZ24} also defined the associated functional dual curvature measures $\widetilde{C}_{q}^{e}(f,\cdot)$ on $\R^n$ and $\widetilde{C}_{q}^{s}(f,\cdot)$ on $S^{n-1}$ in a way such that if $\Omega\subset\R^n$ and $\omega\subset S^{n-1}$ are Borel sets, then
\begin{align*}
%\label{tildeCe-def}
\widetilde{C}_{q}^e(f,\Omega)
:=&\int_{\{x\neq 0:\nabla\varphi(x)\in\Omega\}} \|x\|^{q-n}f(x) \,dx,\\
%\label{tildeCs-def}
\widetilde{C}_{q}^s(f,\omega):=
&\int_{\{x\in \partial'D_f:u_{D_f}(x)\in\omega\}}\|x\|^{q-n}f(x) \,d\HH^{n-1}(x).
\end{align*}
We note that if $m=1,\ldots,n-1$, then
$$
\widetilde{\Psi}_{m,1}(f)=\frac{\omega_m}{\omega_n}\cdot \widetilde{V}_m(f),
$$
$\widetilde{A}^e_{m,1}(f)=\frac{\omega_m}{\omega_n}\cdot \widetilde{C}^e_m(f)$ and $\widetilde{A}^s_{m,1}(f)=\frac{\omega_m}{\omega_n}\cdot \widetilde{C}^s_m(f)$,
therefore, our Theorems~\ref{Amq-pair-measures} and \ref{Amq-even-Minkowski-pair-measures}
improve the corresponding results in \cite{HLXZ24} for $q=1,\ldots,n-1$ as \cite{HLXZ24} needed the following regularity assumptions on $f$ besides $f(o)=\max f$:
There exists $\alpha\in(0,1)$ such that
\begin{equation*}
\label{Huang-Liu-regularity}
\lim_{x\to o}\frac{f(o)-f(x)}{\|x\|^{\alpha+1}}<\infty.
\end{equation*}
In turn, \cite{HLXZ24} could only solve the even Minkowski problem for $f$ when $f(x)=0$ for $\HH^{n-1}$ for a.e. $x\in\partial\{f>0\}$ (in this case, only the Euclidean measure $\widetilde{C}_q^e(f,\cdot)$ occurs, not a pair of measures). 
We have checked that our method can be extended to verify the analogues of Theorem~\ref{Amq-pair-measures} and 
Theorem~\ref{Amq-even-Minkowski-pair-measures} for $\widetilde{C}_{q}^e(f,\Omega)$ and $\widetilde{C}_{q}^s(f,\Omega)$ for any $q>0$ without the regularity assumptions in \cite{HLXZ24}.

Concerning the structure of the paper,
Section~\ref{secPreliminaries} introduces some basic notions, including the spherical Radon transform and its dual. 
Section~\ref{secConvexFunctions} focuses on the basic notions associated to convex functions like Legendre transform and epiconvergence. The coarea formula holds for upper semicontinuous log-concave functions viewed as BV functions, and in turn, the functional centro-sectional measures are introduced and discussed in Section~\ref{secCoarea}, and some additional estimates are provided in
Section~\ref{secBasic-Estimates}.
An estimate for the variation of the moments over a convex body, which plays a fundamental role in the proof of Theorem~\ref{Amq-pair-measures},
is verified in Section~\ref{secpower-integral}. The necessary two-sided limits at zero for 
Theorem~\ref{Amq-pair-measures} and 
Theorem~\ref{Amq-even-Minkowski-pair-measures} are established in Section~\ref{secVariational-formulas}. Finally, Theorem~\ref{Amq-pair-measures} and 
Theorem~\ref{Amq-even-Minkowski-pair-measures} are proved in
Section~\ref{secTheorem1.1} and
Section~\ref{secTheorem1.3}, respectively.

\section{Preliminaries}
\label{secPreliminaries}

The scalar product of $x,y\in\R^n$
is denoted by $x\cdot y$,  and $\|x\|=\sqrt{x\cdot x}$ stands for the corresponding Euclidean norm. Accordingly, we write $B^n=\{x\in\R^n:\|x\|\leq 1\}$ to denote the Euclidean unit ball, and set $S^{n-1}=\partial B^n$.
We recall that a convex body in $\R^n$ is a compact  convex set with non-empty interior.  The $k$-dimensional Hausdorff measure of a Borel subset $X$ of $\R^n$ is denoted by $\HH^k(X)$, normalized in a way such that $\HH^k(X)=V_k(X)$ is the $k$-dimensional Lebesgue measure if $X\subset\R^k$. We set any measure of the empty set  and any integral over the empty set to be zero. If $f\in L_1(\R^n)$, then we also write  $\int_{\R^n} f$ or $\int_{\R^n} f(x)\,dx$ to denote the Lebesgue integral of $f$. In addition, if $g:\partial K\to\R$ is integrable with respect to $\HH^{n-1}$ for a closed convex set $K\subset\R^n$ with $K\neq\R^n$ and ${\rm int}\,K\neq \emptyset$, then we frequently write $\int_{\partial K}g\,d\HH^{n-1}=\int_{\partial K}g(x)\,dx$.

For any finite dimensional real vector space $V$ and $m=0,\ldots,{\rm dim}\,V$, we write ${\rm G}(V,m)$ to denote the Grassmannian of linear subspace of dimension $m$ of $V$, and $\nu_{V,m}$ to denote the unique Haar probability measure on ${\rm G}(V,m)$. In addition, we typically write ${\rm G}(n,m)$ and $\nu_{m}$ for ${\rm G}(\R^n,m)$ and $\nu_{\R^n,m}$. When integrating a function of $\xi\in{\rm G}(n,m)$ over ${\rm G}(n,m)$, we frequently simply write $d\xi$ instead of $d\nu_{m}(\xi)$. 

For $m\in\{1,\ldots,n-1\}$ and Borel measurable $\eta\in L_\infty(S^{n-1})$, the spherical $m$-dimensional Radon transform $\widetilde{\mathcal{R}}_m\eta\in L_\infty({\rm G}(n,m))$  is defined by the formula
\begin{equation*}
\label{Rm-sphere}
\widetilde{\mathcal{R}}_m \eta (\xi)=\int_{\xi\cap S^{n-1}}\eta(x)d\HH^{m-1}(x)
 \end{equation*}
for $\eta\in L_\infty(S^{n-1})$ and $\xi\in {\rm G}(n,m)$. In turn, if $\Upsilon\in  L_\infty({\rm G}(n,m))$ is Borel measurable, then the spherical dual Radon transform $\widetilde{\mathcal{R}}^*_m\Upsilon\in L_\infty(S^{n-1})$ satisfies
\begin{equation}  
\label{dualRm-sphere}
\widetilde{\mathcal{R}}^*_m \Upsilon (u)=\frac{m\omega_m}{n\omega_n}\int_{\zeta\in {\rm G}(u^\bot,m-1)}\Upsilon(\zeta+\R u)\,d\nu_{u^\bot,m-1}(\zeta)
 \end{equation}
for $u\in S^{n-1}$. Helgason's duality formula says that if $\eta\in L_\infty(S^{n-1})$ and $\Upsilon\in  L_\infty({\rm G}(n,m))$ are Borel measurable, then
\begin{equation}
\label{Rm-dualRm-sphere}
\int_{{\rm G}(n,m)}\Upsilon \cdot \widetilde{\mathcal{R}}_m\eta\,d\nu_{m}=
\int_{S^{n-1}}\eta\cdot \widetilde{\mathcal{R}}^*_m \Upsilon (u)\,d\HH^{n-1}.
 \end{equation}

For an integer $k\geq 1$ and a bounded convex set $F\subset\R^k$ with non-empty interior, we define the ``generalized" radial function $\varrho_F\in L_\infty(S^{k-1})$ by the formula
$$
\varrho_F(u)=
\left\{\begin{array}{rl}
0&\mbox{ if }\R u\cap F=\emptyset,\\
\sup\{r\in\R:ru\in F\}&\mbox{ if }\R u\cap F\neq \emptyset
\end{array}\right.
$$ 
for $u\in S^{k-1}$, where $\varrho_F=\varrho_{{\rm cl}\,F}$ is Borel measurable. In order to be able to handle the empty set, we just set 
$$
\varrho_{\emptyset}(u)=0 \mbox{ \  for }u\in S^{k-1}.
$$
Writing ${\rm sign}\,t=1,0,-1$ if $t>0$, $t=0$ or $t<0$, respectively, we deduce from integration in polar coordinates that if $\theta>0$, then
\begin{align}
\label{polar-volume}
V_k(F)=&\frac1k\int_{S^{k-1}}({\rm sign}\,\varrho_F)|\varrho_F|^kd\HH^{k-1},\\
\label{polar-volume-q}
\int_{F}\|x\|^{\theta-k}\,d\HH^k(x)=&\frac1\theta\int_{S^{k-1}}({\rm sign}\,\varrho_F)|\varrho_F|^\theta d\HH^{k-1}.
 \end{align}
In particular, if $F\subset\R^n$ is a convex body and $m\in\{1,\ldots,n-1\}$, then \eqref{polar-volume-q} and integrating in polar coordinates imply
\begin{align}
\nonumber
\int_{{\rm G}(n,m)}\widetilde{\mathcal{R}}_m(\left({\rm sign}\,\varrho_F\right)|\varrho_F|^m)\,d\nu_{m}=&\int_{S^{n-1}}\left({\rm sign}\,\varrho_F\right)|\varrho_F|^m \widetilde{\mathcal{R}}^*_m\mathbf{1}_{{\rm G}(n,m)}\,d\HH^{n-1}\\
\label{rho-tom-xi-inner-int}
=&\frac{m^2\omega_m}{n\omega_n}\int_F\|x\|^{m-n}\,dx.
\end{align}

If $F,K\subset \R^n$ are compact convex sets, then their Hausdorff distance is
$$
\delta_H(F,K)=\min\{r\geq 0:F\subset K+r B^n\mbox{ and }K\subset F+rB^n\},
$$
that turns the family of compact convex sets in $\R^n$ into a locally compact  metric space. Using this topology, the Blaschke Selection theorem says that if $R>0$ and $F_k\subset RB^n$ are compact convex sets for $k\in\N$, then there exists a subsequence $\{F_{k'}\}$ and a compact convex set $F\subset\R^n$ such that
\begin{equation}
\label{Blaschke-Selection}
\lim_{k'\to\infty}F_{k'}=F.
\end{equation}
 In particular, we have the following statement.

\begin{claim}
\label{convergence-by-radial}
Let $F,F_k\subset\R^n$, $k\in\N$ be convex bodies containing $o$ in their interiors. Then $F_k$ tends to $F$ if and only if for any $u\in S^{n-1}$, we have $\lim_{k\to\infty}\varrho_{F_k}(u)=\varrho_F(u)$. 
\end{claim}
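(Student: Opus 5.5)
We prove the two implications separately, using the Blaschke Selection theorem \eqref{Blaschke-Selection} together with elementary facts about radial functions of convex bodies containing $o$ in their interior. For such a body $K$ we have $\varrho_K(u)=\max\{r\geq 0:ru\in K\}>0$. Moreover $\varrho_K$ is continuous on $S^{n-1}$, and $\varrho_K$ is bounded below by some $r>0$ whenever $rB^n\subset K$.

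\emph{Hausdorff convergence implies radial convergence.} Assume $F_k\to F$, and fix $r>0$ with $2rB^n\subset F$. For large $k$ we have $F\subset F_k+rB^n$. A standard separation argument then gives $rB^n\subset F_k$: if a point $x\in rB^n$ were not in $F_k$, a separating hyperplane would push a point of $2rB^n$ at distance more than $r$ from $F_k$, a contradiction. Now fix $\varepsilon\in(0,1)$. We compare homothetic copies. For large $k$,
\[
F_k\subset F+\varepsilon rB^n\subset F+\varepsilon F=(1+\varepsilon)F,
\]
and symmetrically
\[
F\subset F_k+\varepsilon rB^n\subset(1+\varepsilon)F_k,
\]
using $rB^n\subset F_k$. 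Since $\varrho_{\lambda K}=\lambda\varrho_K$ and $\varrho$ is monotone under inclusion, we obtain
\[
(1+\varepsilon)^{-1}\varrho_F\leq \varrho_{F_k}\leq (1+\varepsilon)\varrho_F
\]
on all of $S^{n-1}$. This gives even uniform convergence, hence pointwise convergence.

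\emph{Radial convergence implies Hausdorff convergence.} Assume $\varrho_{F_k}(u)\to\varrho_F(u)$ for every $u$. This step is the main obstacle, because pointwise convergence must be upgraded to uniform bounds. We proceed in two steps.

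First we show the $F_k$ are uniformly bounded. Take the $2n$ directions $\pm e_i$. For large $k$, $\varrho_{F_k}(\pm e_i)\geq \frac12\varrho_F(\pm e_i)$, so $F_k$ contains the cross-polytope $P$ spanned by these $2n$ points, and $P$ contains a ball $sB^n$ with $s>0$ independent of $k$. Suppose $F_k$ contained a point $x_k$ with $\|x_k\|\to\infty$ along a subsequence. Pass to a further subsequence with $x_k/\|x_k\|\to v$. Then $\mathrm{conv}(sB^n\cup\{x_k\})\subset F_k$, and this convex hull contains the segment $[0,tv]$ for every fixed $t$ once $k$ is large. Hence $\varrho_{F_k}(v)\to\infty$, contradicting convergence to the finite value $\varrho_F(v)$. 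So $F_k\subset RB^n$ for some $R$ and all large $k$.

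Next we use compactness. Suppose $F_k\not\to F$. Then some subsequence satisfies $\delta_H(F_{k'},F)\geq\eta>0$. By Blaschke Selection, a further subsequence converges to a compact convex set $G$. Since every $F_k$ contains $sB^n$, so does $G$, so $G$ is a convex body with $o\in{\rm int}\,G$. By the first implication, $\varrho_{F_{k'}}\to\varrho_G$ pointwise. Hence $\varrho_G=\varrho_F$, so $G=F$, because a star body containing $o$ in its interior is determined by its radial function. This contradicts $\delta_H(F_{k'},F)\geq\eta$, and therefore $F_k\to F$.
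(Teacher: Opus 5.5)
Your proof is correct and follows essentially the route the paper intends. The paper gives no written argument and only presents the claim as a consequence of the Blaschke Selection theorem; your compactness argument for the reverse direction, after the uniform bound and the common inner ball $sB^n$, is that deduction with the implicit details supplied, and the homothety sandwich $(1+\varepsilon)^{-1}\varrho_F\leq\varrho_{F_k}\leq(1+\varepsilon)\varrho_F$ handles the forward direction.
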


For $t\in\R$, and convex body $F\subset\R^n$ and compact convex set $L\subset\R^n$ with $o\in L$, we consider 
\begin{equation}
\label{FoplustL}
F\oplus t\cdot L=\{x\in\R^n:x\cdot u\leq h_{F}(u)+th_L(u)\mbox{ \ for }u\in S^{n-1}\},
\end{equation}
that is a compact convex set if it is not the empty set. In particular, if $t\geq 0$, then $F\oplus t\cdot L=F+tL$ (the Minkowski sum),  and if $t< 0$, then $F\oplus t\cdot L=\{x\in \R^n: x+|t|L\subset F\}$, which might be the empty set. In addition, if $L\subset L'$ for a compact convex set $L'\subset\R^n$ and $t<t'$, then 
\begin{equation*}
\label{FtLmonotonicity}
F\Delta (F\oplus t\cdot L)\subset F\Delta ( F\oplus t\cdot L')\mbox{ \ and \ }
F\oplus t\cdot L\subset F\oplus t'\cdot L. 
\end{equation*}

% For $m\in\{1,\ldots,n-1\}$ and $q\in\R$, Cai, Leng, Wu, and Xi \cite{CLWX26} defined the centro-sectional functionals $\widetilde{\Psi}_{m,q}(K)$ of a convex body $K\subset\R^n$ with $o\in{\rm int}\,K$ as
% \begin{align*}
% \widetilde{\Psi}_{m,q}(K)=&\int_{{\rm G}(n,m)}{\rm v}_m(\xi\cap K)^q\,d\nu_{m}(\xi),&&q\neq 0,\\
% \widetilde{\Psi}_{m,0}(K)=&\int_{{\rm G}(n,m)}\log ({\rm v}_m(\xi\cap K))\,d\nu_{m}(\xi),&&q=0.
% \end{align*}
% The pioneering paper \cite{CLWX26} established the property that  the differentials of the aforementioned centro-sectional functional lead to the so-called centro-section measure on $S^{n-1}$ that satisfies the characteristic variational properties \eqref{spheri-var} and \eqref{spheri-var2}.

Finally, we consider two observations from calculus. First, if $|\tau|\leq \frac1{2m}$ for an integer $m\geq 1$, then
\begin{equation}
\label{1+tau-m}
1-m|\tau|\leq (1+\tau)^m\leq 1+2m|\tau|,
 \end{equation}
where the lower bound follows from the AM-GM inequality for m-1 copies of $1$ and one copy of $1-m|\tau|$, and the upper bound is a consequence of the estimate $1+\tau\leq e^\tau$. Secondly, derivation yields the following.

\begin{claim}
\label{tlogt-monotonicity}
The function $\tau\mapsto \tau\log\frac1{\tau}$ is monotone increasing for $\tau\in(0,\frac1{e}]$, and
$\lim_{\tau\to 0^+}\tau\log\frac1{\tau}=0$.
\end{claim}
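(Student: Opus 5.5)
The statement to prove is Claim~\ref{tlogt-monotonicity}, a one-variable calculus fact. I would treat its two parts separately, by direct differentiation and by a substitution that reduces the limit to a standard one.

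\textbf{Monotonicity.} Set $\chi(\tau)=\tau\log\frac1\tau=-\tau\log\tau$ for $\tau>0$. Then
$$
\chi'(\tau)=-\log\tau-1=\log\frac1\tau-1 .
$$
For $\tau\in(0,\frac1e)$ we have $\frac1\tau>e$, so $\log\frac1\tau>1$ and $\chi'(\tau)>0$; at $\tau=\frac1e$ the derivative vanishes. Since $\chi$ is continuous on $(0,\frac1e]$ and has positive derivative on the open interval, the mean value theorem shows that $\chi$ is strictly increasing on $(0,\frac1e]$.

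\textbf{The limit.} Substitute $\tau=e^{-s}$, so that $\tau\to0^+$ corresponds to $s\to\infty$. Then $\chi(\tau)=s e^{-s}$. The elementary bound $e^{s}\geq \frac{s^2}{2}$ for $s\ge0$, which follows from the Taylor series of $e^s$, gives
$$
0\le se^{-s}\le \frac2s\to0 .
$$
Alternatively one can apply L'H\^opital's rule to $\frac{\log(1/\tau)}{1/\tau}$.

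There is no real obstacle here. The only point needing care is that the derivative vanishes at the endpoint $\tau=\frac1e$, which is why the mean value theorem is applied on the closed interval and only positivity on the open interval is used.
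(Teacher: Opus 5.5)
Your proof is correct and takes essentially the paper's approach: the paper justifies the claim only with the remark that ``derivation yields'' it. Your computation $\chi'(\tau)=\log\frac1\tau-1>0$ on $(0,\frac1e)$, together with the substitution $\tau=e^{-s}$ (or L'H\^opital) for the limit, fills in exactly that argument.
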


\section{Convex functions}
\label{secConvexFunctions}

The main reference for this section is \cite{AGM15}. We say that a function $\psi:\Rn\to(-\infty,\infty]$ is proper if $\psi(x)<\infty$ for some $x\in\R^n$.
For any proper function $\psi:\Rn\to(-\infty,\infty]$ that is bounded from below, its Legendre transform $\psi^*:\Rn\to(-\infty,\infty]$ is the proper lower continuous convex function defined by the formula
\begin{equation*}
 \label{Legendre-def}
\psi^*(x)=\sup_{y\in\Rn}x\cdot y-\psi(y). 
\end{equation*}
  We observe that if $\alpha>0$ and $\beta\in\R$, then
\begin{equation}
\label{Legendre-shift}
(\alpha \psi+\beta)^*(x)=\alpha\psi^*\left(\frac{x}{\alpha}\right)-\beta,
\end{equation}
and if $\psi_1\leq\psi_2$ are proper and bounded from below, then
\begin{equation}
\label{Legendre-monotone}
\psi_1^*\geq \psi_2^*.
\end{equation}
For $\psi^{**}=(\psi^*)^*$, we have
\begin{equation}
\label{Legendre-stars} 
\begin{array}{lll}
\psi^{**}\leq &\psi&\mbox{ for any  proper function $\psi$ bounded from below,}\\[1ex]
\psi^{**}=&\psi&\mbox{ if $\psi$ itself is a lower continuous convex function.}
\end{array}
\end{equation}

Next, let $\psi:\Rn\to(-\infty,\infty]$ be a proper convex function. Its domain is ${\rm dom}\,\psi=\{x\in\R^n:\psi(x)<\infty\}$ that is readily convex, and its epigraph is the convex set ${\rm epi}\,\psi=\{(x,t)\in\R^{n+1}:x\in {\rm dom}\,\psi\;\&\;t\geq \psi(x)\}$. We say that the convex function $\psi$ is coercive if  $\lim_{\|x\|\to\infty}\psi(x)=\infty$; or equivalently, $\inf \psi>-\infty$ and the level sets $\{\psi(x)\leq s\}$ are bounded for any $s>\inf \psi$. 

\begin{claim}
\label{ConvexFunctionsBasic}
For any proper convex function $\psi:\Rn\to(-\infty,\infty]$, 
\begin{itemize}
\item if $x\in{\rm int}\,{\rm dom}\,\psi$ and $\psi(x)=s$, then $x\in\partial\{\psi\leq s\}$;
\item ${\rm epi}\,\psi$ is closed if and only if $\psi$ is lower semicontinuous;
\item $o\in{\rm int}\,{\rm dom}\,\psi$ if and only if $\psi^*$ is coercive;
\item if $\psi$ is coercive, then there exist $\alpha>0$ and $\beta\in \R$ such that $\psi(x)\geq \alpha\|x\|+\beta$ for $x\in\R^n$.
\end{itemize}
\end{claim}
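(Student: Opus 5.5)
The four items of Claim~\ref{ConvexFunctionsBasic} are independent standard facts, and I will prove them separately, using only the definitions together with \eqref{Legendre-stars} and \eqref{Legendre-monotone}.

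For the first item, let $x\in{\rm int}\,{\rm dom}\,\psi$ with $\psi(x)=s$. Clearly $x\in\{\psi\leq s\}$. If $x$ is a global minimizer of $\psi$, I need to interpret the statement carefully, since then $\{\psi\leq s\}$ may have interior around $x$; the claim really only concerns $s>\inf\psi$. In that case I pick $y$ with $\psi(y)<s$ and consider the points $z_\varepsilon=x+\varepsilon(x-y)$, which lie in ${\rm dom}\,\psi$ for small $\varepsilon>0$ because $x$ is interior. Convexity applied to $x=\frac{1}{1+\varepsilon}z_\varepsilon+\frac{\varepsilon}{1+\varepsilon}y$ gives
$$s\leq \tfrac1{1+\varepsilon}\psi(z_\varepsilon)+\tfrac{\varepsilon}{1+\varepsilon}\psi(y),$$
hence $\psi(z_\varepsilon)>s$. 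Since $z_\varepsilon\to x$, it follows that $x\in\partial\{\psi\leq s\}$.

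The second item is the usual equivalence between a closed epigraph and closed sublevel sets. If $(x_k,t_k)\in{\rm epi}\,\psi$ converges to $(x,t)$, then lower semicontinuity gives $\psi(x)\leq\liminf\psi(x_k)\leq t$, so the epigraph is closed. Conversely, if the epigraph is closed and $\liminf\psi(x_k)<t<\psi(x)$ along some sequence $x_k\to x$, then a subsequence satisfies $(x_k,t)\in{\rm epi}\,\psi$, and closedness forces $\psi(x)\leq t$, a contradiction.

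The fourth item is the growth bound for coercive convex functions. Fix $s>\inf\psi$ and some $x_0$ with $\psi(x_0)<s$. The level set $\{\psi\leq s\}$ is bounded, so it is contained in $x_0+RB^n$. For $\|x-x_0\|>R$, set $y=x_0+R\frac{x-x_0}{\|x-x_0\|}$. Then $\psi(y)\geq s$ unless $y$ lies in the level set, and in either case $\psi(y)\geq s$ holds in the limit, since otherwise the level set would extend beyond radius $R$. Convexity along the segment from $x_0$ through $y$ to $x$ gives
$$\psi(x)\geq \psi(x_0)+\frac{\|x-x_0\|}{R}\bigl(s-\psi(x_0)\bigr).$$
This is linear growth with slope $\alpha=(s-\psi(x_0))/R>0$ outside $x_0+RB^n$. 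Inside that ball, $\psi\geq\inf\psi>-\infty$. Choosing $\beta$ small enough therefore gives $\psi(x)\geq\alpha\|x\|+\beta$ on all of $\R^n$.

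The third item is the main step. Assume first $rB^n\subset{\rm dom}\,\psi$ for some $r>0$. A convex function is bounded above on a compact subset of the interior of its domain, so $\psi\leq M$ on $rB^n$. Testing $\psi^*(x)\geq x\cdot y-\psi(y)$ with $y=rx/\|x\|$ gives $\psi^*(x)\geq r\|x\|-M$, so $\psi^*$ is coercive.

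For the converse I plan to argue by contraposition. If $o\notin{\rm int}\,{\rm dom}\,\psi$, then $o\notin{\rm int}$ of the convex set ${\rm dom}\,\psi$, so there is a unit vector $u$ with $u\cdot y\leq 0$ for all $y\in{\rm dom}\,\psi$, by separation from the interior (or, if the domain has empty interior, from its affine hull). For any $x_0$ with $\psi^*(x_0)<\infty$ and any $\lambda>0$,
$$\psi^*(x_0+\lambda u)=\sup_{y\in{\rm dom}\,\psi}\bigl(x_0\cdot y+\lambda u\cdot y-\psi(y)\bigr)\leq\psi^*(x_0).$$
Thus $\psi^*$ stays bounded along the ray $x_0+\lambda u$ and is not coercive. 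A finite point $x_0$ exists because $\psi$ is proper convex and so admits an affine minorant. The main care is ensuring properness, so that $\psi^*$ is finite somewhere. Where needed I will also replace $\psi$ by $\psi^{**}$ via \eqref{Legendre-stars}; this does not change $\psi^*$ and alters ${\rm dom}\,\psi$ only on the boundary, so its interior is unaffected.
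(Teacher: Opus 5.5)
The paper gives no proof of this claim: it lists the four items as standard background and refers to \cite{AGM15}. Your self-contained argument is correct and follows the natural elementary route. For the first item you extrapolate past $x$ away from a point $y$ with $\psi(y)<s$. The second uses the sequential characterization of closed epigraphs. The third combines the bound $\psi^*(x)\geq r\|x\|-M$ with a separating direction $u$ along which $\psi^*$ cannot increase. The fourth is linear extrapolation from a bounded sublevel set.

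What your route adds over the citation is that it shows the first item is false as literally stated. For $\psi(x)=\max\{0,\|x\|-1\}$, $x=o$ and $s=0$, one has $o\in{\rm int}\,\{\psi\leq 0\}$. The item holds exactly when $x$ is not an interior point of the set of minimizers, and your restriction to $s>\inf\psi$ captures this. Your argument there also never needs $x\in{\rm int}\,{\rm dom}\,\psi$, since $\psi(z_\varepsilon)=\infty$ is allowed.

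Two spots need tidying. In the fourth item, the phrase that $\psi(y)\geq s$ ``holds in the limit'' is not an argument as written. Simply choose $R$ with $\{\psi\leq s\}\subset x_0+{\rm int}(RB^n)$, so that $\psi(y)>s$ for every $y$ with $\|y-x_0\|=R$. In the third item, shrink $r$ so that $rB^n\subset{\rm int}\,{\rm dom}\,\psi$ before invoking boundedness from above. Finally, your closing remark about passing to $\psi^{**}$ is unnecessary. An affine minorant, coming from a subgradient at a relative interior point of ${\rm dom}\,\psi$, already makes $\psi^*$ finite somewhere, and the separation argument works for any proper convex $\psi$.
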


For a convex function $\psi:\Rn\to(-\infty,\infty]$, if $\Omega={\rm int}\,{\rm dom}\,\psi\neq \emptyset$, then the set $\Omega'$ of $x\in\Omega$ where $\nabla\psi (x)$ exists is a Borel set and $\HH^n(\Omega\backslash\Omega')=0$; moreover,
\begin{equation}
\label{nablaphi-continuous}
\nabla\psi\mbox{ is continuous on }\Omega'.
\end{equation}

 If $K\subset\R^n$ is a non-empty convex set, then its support function $h_K:\R^n\to(-\infty,\infty]$ is defined by
\begin{equation}
\label{hKdef}
h_K(z)=\sup_{y\in K} z\cdot y,
\end{equation}
which is a proper convex one-homogeneous function; namely, $h_K(\lambda z)=\lambda h_K(z)$ for $z\in\R^n$ and $\lambda>0$. We observe that we can write maximum instead of supremum in \eqref{hKdef} if $K$ is compact, and in this case, for any $x\in\partial K$, there exists an exterior unit vector $u\in S^{n-1}$ such that $h_K(u)=u\cdot x$. How support function is related to Legendre transform is discussed in Example~\ref{example-c1L}.   For more details on convex geometry, refer to  Schneider's book ~\cite{S14}.

Let $K\subset\R^n$ be a  convex set with ${\rm int}\,K\neq\emptyset$. If $K\neq\R^n$ (and hence $\HH^{n-1}(\partial K)>0$), then the set $\partial'K$ of $x\in\partial K$ where there exists a unique exterior unit normal $u_{K}(x)$ to $K$ at $x$
is a Borel set $\partial'K\subset\partial K$ such that $\HH^{n-1}(\partial K\backslash \partial'K)=0$; moreover,
\begin{equation}
\label{uK-continuous}
u_K\mbox{ is continuous on }\partial' K.
\end{equation}
The associated recession cone is $\Sigma_K=\{x\in \R^n:y+\lambda x\in K \;\forall \lambda\geq 0 \mbox{ and }y\in{\rm int}\,K\}$, and its dual cone is $\Sigma_K^*=\{x\in \R^n:x\cdot y\leq 0 \;\forall y\in \Sigma_K\}$ where both $\Sigma_K$ and $\Sigma_K^*$ are closed convex cones, and $\Sigma_K\neq \{o\}$ if and only if $K$ is unbounded. We observe that ${\rm int}\,\Sigma_K^*\subset {\rm dom}\,h_K\subset \Sigma_K^*$, and  in particular, $u_K(x)\in {\rm dom}\,h_K$ for any $x\in \partial'K$.

If $\psi$ is a convex function on $\R^n$ with $\psi(o)<\infty$, then we define its recession function (or horizon function) for $x\in\R^n$ by the formula
$$
\bar{\psi}(x)=\lim_{r\to \infty}\frac{\psi(rx)}r.
$$
Then $\bar{\psi}$ is a well-defined convex  one-homogeneous function, and its epigraph is the recession cone of the epigraph of $\psi$. We note that by abusing notation, we write $\bar{\psi}^*$ to denote $\overline{\psi^*}$. As examples in the case of a proper convex function $\psi$, we note that
\begin{align}
\label{barpsi-hK}
\bar{\psi}=&h_K  \mbox{ \ if $\psi=h_K$ for a convex set $K\subset\R^n$},\\
\label{barpsi-dompsi-bounded}
\bar{\psi}^*=& h_F \mbox{ \ if $F={\rm dom}\,\psi$ is bounded.}
\end{align}

A crucial notion of our paper is epiconvergence of lower continuous coercive convex functions. All related notions, notations and statements in the rest of this section are taken from Li, Mussnig \cite{LM22}.
We denote the family of proper lower continuous coercive convex functions on $\R^n$ by ${\rm Conv}_c(\R^n)$, and the family of elements $\varphi \in {\rm Conv}_c(\R^n)$ with ${\rm int}\,{\rm dom}\,\varphi\neq \emptyset$ by ${\rm Conv}_c^n(\R^n)$.
We note that level sets of a convex function are coming from the ``horizontal sections" of its epigraph, hence the name  \emph{epiconvergence}.

\begin{definition}
\label{epiconvergence-def}
For $\varphi_k,\varphi\in {\rm Conv}_c(\R^n)$, $k\in\N$,
 $\varphi_k$ epiconverges to $\varphi$, in notation, 
 $$
 \varphi_k\xrightarrow{\rm epi} \varphi,
 $$
 if and only if for any $s>\min \varphi$, the level sets $\{\varphi_k\leq s\}$ tend to $\{\varphi\leq s\}$, and for all $s<\min \varphi$, the level set $\{\varphi_k\leq s\}$ is the empty set for large $k$.
\end{definition}

If ${\rm int}\,{\rm dom}\,\varphi\neq\emptyset$, then additional natural characterizations of epiconvergence are available.

\begin{lemma}
\label{epiconvergence-interior}
If $\varphi_k,\varphi\in {\rm Conv}_c^n(\R^n)$, then the following three statements are equivalent. 
\begin{itemize}
\item $\varphi_k\xrightarrow{\rm epi} \varphi$.
\item $\lim_{k\to\infty} \varphi_k(x)=\varphi(x)$ holds for any $x\not\in \partial ({\rm dom}\,\varphi)$.
\item For any compact $X\subset \R^n$ with $X\cap\partial ({\rm dom}\,\varphi)=\emptyset$, $\varphi_k|_X$ tends uniformly to $\varphi|_X$.
\end{itemize}
\end{lemma}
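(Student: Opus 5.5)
We prove the cyclic chain (i)$\Rightarrow$(ii)$\Rightarrow$(iii)$\Rightarrow$(i), working throughout with the level sets $\{\varphi\leq s\}$. These are convex and closed by lower semicontinuity. For $s>\min\varphi$ they are bounded by coercivity. Since ${\rm int}\,{\rm dom}\,\varphi\neq\emptyset$, they have non-empty interior for $s>\min\varphi$ close enough to $\min\varphi$ only in a weak sense, but always for $s$ larger than $\varphi(x_0)$ for some $x_0\in{\rm int}\,{\rm dom}\,\varphi$. The basic tool is Claim~\ref{ConvexFunctionsBasic}: for $x\in{\rm int}\,{\rm dom}\,\varphi$ with $\varphi(x)=s>\min\varphi$, we have $x\in\partial\{\varphi\leq s\}$. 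A related fact is $\{\varphi<s\}\subset{\rm int}\{\varphi\leq s\}$ inside ${\rm int}\,{\rm dom}\,\varphi$.

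\emph{(i)$\Rightarrow$(ii).} Fix $x\notin\partial({\rm dom}\,\varphi)$.

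If $x\notin{\rm cl}\,{\rm dom}\,\varphi$, then $\varphi(x)=\infty$. For any $s$, the level set $\{\varphi\leq s\}$ is a compact subset of ${\rm cl}\,{\rm dom}\,\varphi$ at positive distance from $x$. Hausdorff convergence of $\{\varphi_k\leq s\}$ to it gives $x\notin\{\varphi_k\leq s\}$ for large $k$, i.e. $\varphi_k(x)>s$. Hence $\varphi_k(x)\to\infty$.

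If $x\in{\rm int}\,{\rm dom}\,\varphi$, set $s=\varphi(x)$.
\begin{itemize}
\item \emph{Upper bound.} For $s'>s$, $x$ lies in the interior of the convex body $\{\varphi\leq s'\}$ (via $\{\varphi<s'\}$ being open in ${\rm int}\,{\rm dom}\,\varphi$). By Hausdorff convergence of convex bodies, interior points are eventually contained, so $\varphi_k(x)\leq s'$ for large $k$.
\item \emph{Lower bound.} For $s''<s$, either $s''<\min\varphi$, in which case the level set of $\varphi_k$ is eventually empty, or $x\notin\{\varphi\leq s''\}$, a compact set. In the latter case $x$ has positive distance from it, so eventually $x\notin\{\varphi_k\leq s''\}$.
\end{itemize}
Thus $\varphi_k(x)\to\varphi(x)$.

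\emph{(ii)$\Rightarrow$(iii).} This is the classical fact that pointwise convergence of convex functions on an open convex set is locally uniform. On $X\cap{\rm int}\,{\rm dom}\,\varphi$, compact, we cover by small simplices inside ${\rm int}\,{\rm dom}\,\varphi$. Pointwise convergence at the vertices bounds $\varphi_k$ uniformly from above. Convexity together with convergence at the centre gives a uniform lower bound, hence uniform Lipschitz bounds on slightly smaller sets, and equicontinuity plus pointwise convergence yields uniform convergence.

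On $X\setminus{\rm cl}\,{\rm dom}\,\varphi$, uniform convergence means $\inf_X\varphi_k\to\infty$. Take a polytope $P\subset{\rm int}\,{\rm dom}\,\varphi$ and a finite set of points outside ${\rm cl}\,{\rm dom}\,\varphi$ such that every $y\in X$ lies on a segment from a point of $P$ through one of these finitely many points beyond it. Convexity along the segment then forces $\varphi_k(y)\geq$ a linear extrapolation that tends to $\infty$, since $\varphi_k$ is bounded on $P$ and large at the finitely many points. Arranging this finite configuration by compactness is the main obstacle of the whole proof.

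\emph{(iii)$\Rightarrow$(i).} Let $s>\min\varphi$.
\begin{itemize}
\item \emph{Uniform boundedness.} Since $\{\varphi\leq s+1\}$ is bounded, take a sphere $R S^{n-1}$ lying in the open complement of it intersected with ${\rm cl}\,{\rm dom}\,\varphi$'s complement-or-interior region. More precisely, use a compact annulus avoiding $\partial\,{\rm dom}\,\varphi$: possible when ${\rm dom}\,\varphi$ is bounded; otherwise use the sphere inside ${\rm int}\,{\rm dom}\,\varphi$, where $\varphi>s+1$. Uniform convergence there plus convexity and $\varphi_k(x_0)\to\varphi(x_0)$ at a centre point show all $\{\varphi_k\leq s\}$ lie in a fixed ball.
\item \emph{Identifying the limit.} By Blaschke selection \eqref{Blaschke-Selection} it suffices that every subsequential limit $L$ equals $\{\varphi\leq s\}$. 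Uniform convergence on compact subsets of the complement of $\partial\,{\rm dom}\,\varphi$ shows ${\rm int}\{\varphi\leq s\}\cap{\rm int}\,{\rm dom}\,\varphi\subset L$ and $L\cap\{\varphi>s\}\cap({\rm int\,dom}\cup{\rm ext})=\emptyset$. Closedness and convexity, the latter giving that $\{\varphi\leq s\}$ is the closure of its relative interior points in ${\rm int}\,{\rm dom}\,\varphi$ since $s>\min\varphi$, then give $L=\{\varphi\leq s\}$.
\item \emph{Empty level sets.} For $s<\min\varphi$, the level sets $\{\varphi_k\leq s\}$ lie in a fixed ball, and a point of the limit would contradict the lower bounds just obtained.
\end{itemize}
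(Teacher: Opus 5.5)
The paper does not prove this lemma; it quotes it from Li and Mussnig (it is the level-set form of the Rockafellar--Wets characterization of epi-convergence of convex functions). So your argument has to stand on its own.

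Your (i)$\Rightarrow$(ii) is correct, and so is (ii)$\Rightarrow$(iii). The finite configuration there is routine rather than the main obstacle. For $y\notin{\rm cl}\,{\rm dom}\,\varphi$, pick $z$ on the segment from the centre $p_0$ of a ball $P=p_0+\rho B^n\subset{\rm int}\,{\rm dom}\,\varphi$ to $y$, strictly between the exit point and $y$. Then $y$ lies in the interior of the shadow cone $\{p+\lambda(z-p):p\in P,\ \lambda\geq 1\}$, and finitely many such open cones cover $X\setminus{\rm cl}\,{\rm dom}\,\varphi$.

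The real difficulty is in (iii)$\Rightarrow$(i), and there you have a genuine gap. Hypothesis (iii) says nothing on or near $\partial({\rm dom}\,\varphi)$, yet two of your steps need exactly that.

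\emph{Uniform boundedness.} Consider $\varphi(x)=\|x\|$ on a half-space and $+\infty$ elsewhere, which lies in ${\rm Conv}_c^n(\R^n)$. Here ${\rm dom}\,\varphi$ is unbounded but not $\R^n$, so every large sphere meets $\partial({\rm dom}\,\varphi)$. There is then neither a compact annulus avoiding the boundary nor a ``sphere inside ${\rm int}\,{\rm dom}\,\varphi$''. Nothing in your argument prevents the segment from $x_0$ to a far point of $\{\varphi_k\le s\}$ from crossing the sphere on or near $\partial({\rm dom}\,\varphi)$.

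\emph{Empty level sets.} Suppose $\varphi_k(y_k)\le s<\min\varphi$ and $y_k\to y$. Then $y$ must minimize $\varphi$. In the same example the only minimizer is $o\in\partial({\rm dom}\,\varphi)$, where (iii) gives no lower bound, so ``the lower bounds just obtained'' yield no contradiction. What excludes this behaviour is convexity of the $\varphi_k$ used at boundary points, not (iii) alone.

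The missing ingredient is a liminf inequality, obtained by interpolating with an interior point: if $y_k\to y$, then $\liminf_k\varphi_k(y_k)\geq\varphi(y)$.
\begin{itemize}
\item For $y\notin{\rm cl}\,{\rm dom}\,\varphi$, this is just (iii) on a small ball around $y$.
\item For $y\in{\rm cl}\,{\rm dom}\,\varphi$, fix $x_0\in{\rm int}\,{\rm dom}\,\varphi$ and $\lambda\in(0,1)$. Then $z_k=(1-\lambda)y_k+\lambda x_0$ converges to $z_\lambda=(1-\lambda)y+\lambda x_0\in{\rm int}\,{\rm dom}\,\varphi$. Applying (iii) on a small compact ball around $z_\lambda$, together with convexity, gives
\[ \varphi(z_\lambda)=\lim_{k}\varphi_k(z_k)\leq(1-\lambda)\liminf_{k}\varphi_k(y_k)+\lambda\varphi(x_0). \]
Letting $\lambda\to0^+$ and using lower semicontinuity of $\varphi$ at $y$ gives the claim.
\end{itemize}

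With this inequality, boundedness follows. Take $x_0\in{\rm int}\,{\rm dom}\,\varphi$ with $\varphi(x_0)<s$; it exists by interpolating between a minimizer and an interior point. Take $R$ with $\{\varphi\le s+1\}\subset x_0+{\rm int}(RB^n)$. Suppose $\varphi_k(y_k)\le s$ with $\|y_k\|\to\infty$, and let $w_k\in[x_0,y_k]$ satisfy $\|w_k-x_0\|=R$. For large $k$ we have $\varphi_k(x_0)<s$, so $\varphi_k(w_k)\le s$ by convexity. Any limit point $w$ would then satisfy $s+1<\varphi(w)\le s$, a contradiction.

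The same inequality gives the emptiness of $\{\varphi_k\le s\}$ for $s<\min\varphi$ directly. It also gives $L\subset\{\varphi\le s\}$ for every subsequential Hausdorff limit $L$, without separate treatment of boundary points.
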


Epiconvergence implies that all epigraphs are contained in the same cone with axial rotational symmetry.

\begin{lemma}
\label{epiconvergence-property}
If $\varphi,\varphi_k\in {\rm Conv}_c^n(\R^n)$, $k\in\N$, and $\varphi_k\xrightarrow{\rm epi} \varphi$, then there exist $\alpha>0$ and $\beta\in\R$ such that $\varphi(x),\varphi_k(x)\geq \alpha\|x\|+\beta$ for any $x\in\R^n$ and $k\in\N$.
\end{lemma}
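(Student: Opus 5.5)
The plan is to reduce the statement to a uniform linear lower bound for a single well-chosen convex function, namely the pointwise supremum of the family. Epiconvergence implies that the minima converge and that the level sets converge in the Hausdorff metric. Fix a level $s>\min\varphi$. Then $\{\varphi\le s\}$ is a compact convex set, since $\varphi$ is coercive. By Definition~\ref{epiconvergence-def}, $\{\varphi_k\le s\}\to\{\varphi\le s\}$, and a convergent sequence of compact sets is eventually contained in a fixed ball. So there is $R>0$ with $\{\varphi_k\le s\}\subset RB^n$ for all $k\ge k_0$, and after enlarging $R$ also $\{\varphi\le s\}\subset RB^n$.

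Next I would obtain a lower bound on the minima. Since $\{\varphi_k\le s'\}=\emptyset$ for large $k$ whenever $s'<\min\varphi$, we get $\liminf_k\min\varphi_k\ge\min\varphi$. In particular there is $m_0\in\R$ with $\min\varphi_k\ge m_0$ for all $k\ge k_0$, possibly after increasing $k_0$; we may also assume $m_0<s$.

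The key step is a convexity argument that turns containment of a level set in a ball into linear growth. Let $\phi=\varphi_k$ with $k\ge k_0$, let $x_k$ be a minimizer, so $\|x_k\|\le R$, and take $\|x\|>R$. The point $y=x_k+\lambda(x-x_k)$ with $\lambda=R'/\|x-x_k\|$, where $R'$ is chosen so that $\|y\|>R$ (for instance $R'=2R$ together with $\|x-x_k\|\ge 2R$), lies outside $RB^n$. Hence $\phi(y)>s$. By convexity along the segment from $x_k$ to $x$,
\[
\phi(x)\ge \phi(x_k)+\frac{\phi(y)-\phi(x_k)}{\lambda}\ge m_0+(s-m_0)\frac{\|x-x_k\|}{2R}.
\]
This gives $\phi(x)\ge \frac{s-m_0}{2R}\|x\|-C$ for $\|x\|\ge 3R$. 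On $3RB^n$ we simply use $\phi\ge m_0$. Both pieces combine into $\phi(x)\ge\alpha\|x\|+\beta$ with $\alpha=\frac{s-m_0}{2R}$ and $\beta=m_0-3R\alpha-C$, which depend only on $R$, $s$ and $m_0$.

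Finally, the finitely many $\varphi_k$ with $k<k_0$ and $\varphi$ itself are handled separately by Claim~\ref{ConvexFunctionsBasic}, since each is coercive and so admits its own linear lower bound. Taking the minimum of all the $\alpha$'s and of all the $\beta$'s completes the argument; this is valid because shrinking $\alpha$ only weakens the bound once $\beta$ is adjusted (use $\alpha'\|x\|+\beta\le\alpha\|x\|+\beta$). I expect the main obstacle to be the uniformity, which is exactly the step where Hausdorff convergence of the level sets is needed to produce a single radius $R$ and where the non-emptiness argument is needed to produce a single lower bound $m_0$. The remaining estimates are routine convexity bookkeeping.
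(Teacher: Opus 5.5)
The overall strategy is the standard one: a uniform ball containing the level sets, uniform control of the minima, then convexity along the segment from a minimizer. The paper gives no proof of its own and quotes the lemma from \cite{LM22}, where it is proved along these lines. However, your key convexity estimate is wrong as written. Convexity gives
\[
\phi(x)\ge \phi(x_k)+\frac{\phi(y)-\phi(x_k)}{\lambda}=\Bigl(1-\frac1\lambda\Bigr)\phi(x_k)+\frac{\phi(y)}{\lambda},
\]
and since $1/\lambda=\|x-x_k\|/(2R)\ge 1$, the coefficient of $\phi(x_k)=\min\varphi_k$ is non-positive. Replacing $\phi(x_k)$ by the smaller number $m_0$ therefore increases the right-hand side, so your inequality $\phi(x)\ge m_0+(s-m_0)\frac{\|x-x_k\|}{2R}$ does not follow. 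It can in fact fail: take $\varphi_k=\varphi=\|\cdot\|$, $s=1$, $R=2$, $m_0=-10$ (all admissible in your argument), $x_k=o$ and $\|x\|=100$; your bound would assert $100\ge 265$. The slope convexity actually delivers is $(s-\min\varphi_k)/(2R)$. To make it uniformly positive you need an \emph{upper} bound on $\min\varphi_k$ strictly below $s$. You state at the outset that the minima converge, but you only establish and use the half $\liminf_k\min\varphi_k\ge\min\varphi$.

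The repair is short. Pick $s_1\in(\min\varphi,s)$. Since $\{\varphi_k\le s_1\}$ tends to the non-empty set $\{\varphi\le s_1\}$, it is non-empty for large $k$, i.e.\ $\min\varphi_k\le s_1$. This is also what justifies your claim $\|x_k\|\le R$, which needs $x_k\in\{\varphi_k\le s\}$. Then $\phi(x)\ge\phi(x_k)+(s-s_1)\frac{\|x-x_k\|}{2R}\ge m_0+(s-s_1)\frac{\|x-x_k\|}{2R}$: the lower bound $m_0$ controls the intercept and the upper bound $s_1$ controls the slope. There is also a minor point: from $\|x_k\|\le R$ and $\|y-x_k\|=2R$ you only get $\|y\|\ge R$, so $y$ could lie on $\partial(RB^n)$ and possibly in $\{\phi\le s\}$. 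Fix this by choosing $R$ so that the level sets lie in ${\rm int}\,(RB^n)$. With these changes the rest goes through: the bound $\phi\ge m_0$ on $3RB^n$, and the finitely many remaining functions handled by Claim~\ref{ConvexFunctionsBasic}.
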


In turn, the analogue of the Blaschke Selection theorem is as follows. 

\begin{lemma}
\label{epiconvergence-selection}
If for some $\alpha>0$, $\beta,M\in\R$ the functions $\varphi_k\in {\rm Conv}_c(\R^n)$, $k\in\N$, satisfy that for any $k\in\N$, we have
\begin{itemize}
\item $\min\varphi_k\leq M$, and
\item $\varphi_k(x)\geq \alpha\|x\|+\beta$ for $x\in\R^n$,
\end{itemize}
then  $\varphi_{k'}\xrightarrow{\rm epi} \varphi$ holds for a subsequence $\{\varphi_{k'}\}$ and a $\varphi\in {\rm Conv}_c(\R^n)$.
\end{lemma}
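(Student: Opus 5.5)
The plan is to reduce the statement to the Blaschke Selection theorem \eqref{Blaschke-Selection}, applied to the level sets, combined with a diagonal argument over rational levels. First observe that $\varphi_k(x)\geq \alpha\|x\|+\beta$ implies $\min\varphi_k\geq\beta$ and
$$
\{\varphi_k\leq s\}\subset \frac{s-\beta}{\alpha}\,B^n \quad\text{for } s\geq\beta .
$$
Thus $m_k=\min\varphi_k\in[\beta,M]$, and the level sets at any fixed height $s$ lie in a fixed ball. These level sets are compact convex sets, since $\varphi_k$ is lower semicontinuous and convex. After passing to a subsequence we may assume $m_k\to m\in[\beta,M]$. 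For every rational $s>m$ the set $\{\varphi_k\leq s\}$ is non-empty for large $k$. Enumerating the rationals in $(m,\infty)$ and applying \eqref{Blaschke-Selection} successively, a diagonal subsequence $\{\varphi_{k'}\}$ has the property that $\{\varphi_{k'}\leq s\}\to K_s$ for every rational $s>m$, where $K_s$ is a non-empty compact convex set.

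Next I record the structural properties of the family $\{K_s\}$, all of which pass to the Hausdorff limit from the corresponding properties of $\varphi_{k'}$.
\begin{itemize}
\item Monotonicity: $K_s\subset K_t$ for $s<t$.
\item Boundedness: $K_s\subset \frac{s-\beta}{\alpha}B^n$.
\item Convexity in the level: for $\lambda\in[0,1]$ with $\lambda s_1+(1-\lambda)s_2$ rational, $\lambda K_{s_1}+(1-\lambda)K_{s_2}\subset K_{\lambda s_1+(1-\lambda)s_2}$. This holds because $\lambda\{\varphi\leq s_1\}+(1-\lambda)\{\varphi\leq s_2\}\subset\{\varphi\leq \lambda s_1+(1-\lambda)s_2\}$ for any convex $\varphi$.
\end{itemize}
I then define
$$
\varphi(x)=\inf\{s\in\mathbb{Q},\ s>m:\ x\in K_s\},\qquad \inf\emptyset=\infty .
$$
Then $\{\varphi\leq s\}=\bigcap_{t>s,\,t\in\mathbb{Q}}K_t$ is closed for every $s$, so $\varphi$ is lower semicontinuous. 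The convexity-in-level property gives that $\varphi$ is convex. The bound $\varphi\geq \alpha\|\cdot\|+\beta$ is inherited, so $\varphi$ is coercive, and $\varphi$ is proper because $K_s\neq\emptyset$. Finally $\min\varphi=m$: $\varphi\geq m$ by definition, and the nested non-empty compact sets $K_s$, $s\downarrow m$, have a common point, at which $\varphi=m$. Hence $\varphi\in{\rm Conv}_c(\R^n)$.

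The main obstacle is proving that $\{\varphi_{k'}\leq s\}\to\{\varphi\leq s\}$ for \emph{every} real $s>m$, not only along rationals. This requires continuity of $s\mapsto K_s$ in the Hausdorff metric on $(m,\infty)$, which I will get from the concavity relation above. Fix $m<s_0<s<t$ with $s_0$ rational, and pick $x_0\in K_{s_0}$. The concavity relation gives
$$
\lambda x_0+(1-\lambda)K_t\subset K_{\lambda s_0+(1-\lambda)t}.
$$
Choosing $\lambda$ so that $\lambda s_0+(1-\lambda)t=s$ yields $\lambda=O(t-s)$, so $K_t$ is within Hausdorff distance $O(t-s)$ of a subset of $K_s$. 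The same argument applies to the $\varphi_{k'}$ with constants uniform in $k'$, using the uniform ball bound and the fact that $\{\varphi_{k'}\leq s_0\}$ is non-empty for large $k'$. This gives equicontinuity in $s$ of the level sets, locally uniformly on $(m,\infty)$. Sandwiching an arbitrary $s>m$ between rationals $s_-<s<s_+$ and using monotonicity, I conclude both $\{\varphi_{k'}\leq s\}\to\{\varphi\leq s\}$ and $\{\varphi\leq s\}=K_s$ for rational $s$.

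For $s<m$, we have $s<m_{k'}$ for large $k'$, so $\{\varphi_{k'}\leq s\}=\emptyset$. This verifies Definition~\ref{epiconvergence-def}.
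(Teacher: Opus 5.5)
Your argument is correct, and it takes a genuinely different route from the paper. The paper gives no proof of this lemma: it imports it, together with Definition~\ref{epiconvergence-def} and Lemmas~\ref{epiconvergence-interior} and~\ref{epiconvergence-property}, from Li--Mussnig \cite{LM22}.

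\textbf{The standard route.} Selection statements of this kind are usually obtained through epigraphs.
\begin{itemize}
\item Painlev\'e--Kuratowski convergence of closed sets in $\R^{n+1}$ is compact (Rockafellar--Wets), so any sequence of functions has a subsequence that epi-converges in the extended sense.
\item The cone bound $\varphi_k\geq\alpha\|\cdot\|+\beta$ excludes the value $-\infty$ and makes the limit coercive.
\item The bound $\min\varphi_k\leq M$, with minimizers in the fixed ball $\frac{M-\beta}{\alpha}B^n$, makes the limit proper.
\item A separate lemma is then needed to convert epi-convergence of coercive functions into Hausdorff convergence of the level sets, which is the form Definition~\ref{epiconvergence-def} uses.
\end{itemize}

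\textbf{Your route.} You work directly with the level-set formulation. You apply \eqref{Blaschke-Selection} with a diagonal argument over rational levels and rebuild $\varphi$ from the limit sets $K_s$. The key step is the convexity estimate
\[
\delta_H\bigl(\{\varphi_k\leq s\},\{\varphi_k\leq t\}\bigr)\leq 2\,\frac{t-\beta}{\alpha}\cdot\frac{t-s}{t-s_0},\qquad m<s_0<s<t.
\]
This is an equi-Lipschitz bound in the level parameter, uniform in $k$, and it is what lets you pass from rational levels to all real $s>m$. Your sandwich $K_{s_-}\subset\{\varphi\leq s\}\subset K_{s_+}$ then closes the argument.

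\textbf{Comparison.} Your approach is self-contained, needing only Blaschke selection and convexity, and it yields the conclusion directly in the form the paper's definition requires. The epigraph route is shorter if one takes the general theory of epi-convergence for granted.

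\textbf{One detail to tighten.} You state the inclusion $\lambda K_{s_1}+(1-\lambda)K_{s_2}\subset K_{\lambda s_1+(1-\lambda)s_2}$ only when the combined level is rational. So "this gives convexity of $\varphi$" does not yet cover all $\lambda\in[0,1]$. Either of the following fixes it:
\begin{itemize}
\item Note that $\lambda K_{s_1}+(1-\lambda)K_{s_2}\subset K_r$ for every rational $r>\lambda s_1+(1-\lambda)s_2$; this passes to the limit from the corresponding inclusion for $\varphi_{k'}$.
\item Prove the convexity inequality for rational $\lambda$, then extend it to all $\lambda$ using lower semicontinuity of $\varphi$ along the segment.
\end{itemize}
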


\section{The coarea formula and the centro-sectional variational measures for log-concave functions}
\label{secCoarea}

We recall  that a function $f:\Rn\to[0,\infty)$ is log-concave if $f((1-\lambda)x+\lambda y)\geq f(x)^{1-\lambda}f(y)^\lambda$ for any $x,y\in\Rn$ and $\lambda\in[0,1]$; or equivalently, $f=e^{-\varphi}$ for a convex function $\varphi:\Rn\to(-\infty,\infty]$. It is well-known that $0<\int_{\Rn} f<\infty$ if and only if $0<\sup f<\infty$, and the level sets $\{f\geq s\}$ are bounded with non-empty interior whenever $0<s<\sup f$. The domain of such $f$ is the convex set $D_f=\{f>0\}$ with non-empty interior. If $f$ is upper semicontinuous, then the level sets $\{f\geq s\}$ are convex bodies for $s\in(0,\sup f)$.

If $f$ is a log-concave function on $\R^n$ such that $0<\int_{\Rn} f<\infty$ and $o\in{\rm int}\,D_f$, then Claim~\ref{ConvexFunctionsBasic} yields the existence of $\alpha>0$, $\beta\in \R$ and $r_0,s_0>0$ such that
\begin{align}
\label{log-concave-f-exp-above}
f(x)\leq &e^{-\alpha\|x\|-\beta} &&\mbox{for }x\in\R^n,\\
\label{log-concave-f-ball-in-lvelset}
r_0B^n\subset&\{f\geq s\} &&\mbox{for }s\in(0,s_0].
\end{align}
In particular, for any $m\in\{1,\ldots,n-1\}$, there exist $A_1>A_0>1$ depending on $m$ and $f$ such that if $\xi\in{\rm G}(n,m)$, then
\begin{equation}
\label{log-concave-f-xi-A0A1}
A_0\leq ({\rm R}_mf)\leq A_1.
\end{equation}

For $t>0$, the sup-convolution $f\oplus t\cdot g$ of log-concave functions $f=e^{-\varphi}$ and $g=e^{-\psi}$ on $\Rn$ is the upper semicontinuous log-concave function
\begin{equation*}
	 (  f\oplus t\cdot g )  (z)= \sup_{x+t y=z} f(x)g(y)^t,
\end{equation*}
which can be written in the form
$$
f\oplus t\cdot g=e^{-(\varphi^*+t\psi^*)^*}.
$$

\begin{example} 
\label{example-c1L}
If $c>0$ and $L\subset \R^n$ is a compact convex set, then $c\mathbf{1}_L$ is an upper semicontinuous log-concave function, and $c \mathbf{1}_L=e^{-\psi}$ where
$$
\psi(x)=\left\{
\begin{array}{ll}
-\log c&\mbox{ if }x\in L,\\
\infty &\mbox{ if }x\in \R^n\backslash L.
\end{array}\right.
$$
For the Legendre transform and recession function, it follows that
\begin{align*}
\psi^*=&h_L+\log c,\\
\bar{\psi}^*=&h_L.
\end{align*}
Now, let $f=e^{-\varphi}$ be a log-concave function  on $\Rn$ such that $0<\int_{\Rn} f<\infty$, and let $F_s=\{f\geq s\}$ if $0<s\leq \sup f$, and hence $f\oplus t\cdot (c\mathbf{1}_L)=e^{-(\varphi^*+t\psi^*)^*}$ holds  for $t\geq 0$. In addition, if $t\in\R$, then
\begin{equation}
\label{perturb-with-c1L}
e^{-(\varphi^*+t\psi^*)^*}=c^t\tilde{f}_t \mbox{ \ for }\tilde{f}_t=e^{-(\varphi^*+th_L)^*},
\end{equation}
where, according to \cite{FR26}, we have (cf. \eqref{FoplustL})
\begin{align}
\nonumber
\sup \tilde{f}_t\leq &\sup f&&\mbox{with equality if }t>0,\\
\label{log-sum-level}
\{\tilde{f}_t\geq s\}=&F_s\oplus tL&&\mbox{if }0<s\leq \sup f.
\end{align}
\end{example}

Let $q\in \R$ and $m=\{1,\ldots,n-1\}$. As mentioned in the introduction, the centro-sectional functional $\widetilde{\Psi}_{m,q}(K)$ was introduced in \cite{CLWX26} for convex bodies $K\subset\mathbb{R}^n$ containing the origin in their interiors. Inspired by this work, we introduce an analogue for functions.
% \begin{align*}
% \widetilde{\Psi}_{m,q}(K)=&\int_{{\rm G}(n,m)}{\rm v}_m(\xi\cap K)^q\,d\nu_{m}(\xi),&& q\neq 0,\\
% \widetilde{\Psi}_{m,0}(K)=&\int_{{\rm G}(n,m)}\log ({\rm v}_m(\xi\cap K))\,d\nu_{m}(\xi),&&q=0.
% \end{align*}

\begin{definition}
\label{Psimqfdef-sectionint}
If  $q\in\R$, $m=1,\ldots,n-1$, and $f$ is a log-concave function  on $\Rn$ such that $0<\int_{\Rn} f<\infty$ and $o\in{\rm int}\,D_f$, then we define
\begin{equation}
\label{Psimqfdef-sectionint-eq}
\begin{array}{rcll}
\widetilde{\Psi}_{m,q}(f)&=\int_{{\rm G}(n,m)}\|f|_{\xi}\|_1^qd\nu_{m}(\xi)&\mbox{ if }  q \neq 0, \\
\widetilde{\Psi}_{m,0}(f)&=\int_{{\rm G}(n,m)}\log \|f|_{\xi}\|_1 d\nu_{m}(\xi) &\mbox{ if }  q = 0,
\end{array}
\end{equation}
where the integrals in \eqref{Psimqfdef-sectionint-eq} are 
finite according to \eqref{log-concave-f-xi-A0A1}.
\end{definition}

For any $m\in\{1,\ldots,n-1\}$ and  log-concave function $f$ on $\Rn$ such that $0<\int_{\Rn} f<\infty$ and $o\in{\rm int}\,D_f$, Keith Ball defined a convex body $K_m(f)$ with $o\in{\rm int}\,K_m(f)$ that satisfies 
\begin{equation}
\label{Ball-body}
\varrho_{K_m(f)}(u)^m=\frac{m}{f(o)}\int_0^\infty r^{m-1} f(ru)\,dr
\end{equation}
for $u\in S^{n-1}$ where the  boundedness of $K_m(f)$ follows from 
\eqref{log-concave-f-exp-above}.
We recall that the measure of the empty set is set to be zero.

\begin{lemma}
\label{fxi-representations}
Let $m\in\{1,\ldots,n-1\}$.  For any  log-concave function $f$ on $\Rn$ such that $0<\int_{\Rn} f<\infty$ and $o\in{\rm int}\,D_f$, if $\xi\in {\rm G}(n,m)$, then
\begin{align}
\label{fxi-representations-Ball-eq}
{\rm R}_mf(\xi)=& \left( \widetilde{\mathcal{R}}_m\int_0^\infty r^{m-1} f(ru)\,dr\right)(\xi)= f(o)\HH^m\left(K_m(f)\cap \xi\right),\\
 \label{fxi-representations-area-levelset-eq}
 =& \int_0^\infty\HH^m\left(\{f> s\}\cap\xi\right)\,ds,\\
 \label{fxi-representations-Rmlevelset-eq}
 =& \frac1{m}\int_0^\infty\int_{S^{n-1}\cap\xi}\left({\rm sign}\,\varrho_{\{f> s\}}\right)\left|\varrho_{\{f> s\}}\right|^m\,d\HH^{m-1}\,ds,
\end{align}
where within the formula  \eqref{fxi-representations-Ball-eq}, the expression $\int_0^\infty r^{m-1} f(ru)\,dr$  is a short hand notation for the bounded function $u\mapsto \int_0^\infty r^{m-1} f(ru)\,dr$ of $u\in S^{n-1}$.

In addition, the function $\xi\mapsto {\rm R}_mf(\xi)$ of $\xi\in {\rm G}(n,m)$ is continuous.
 \end{lemma}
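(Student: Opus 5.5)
The identities follow from polar coordinates in $\xi$, the layer-cake representation, and Fubini; continuity will follow from dominated convergence. Fix $\xi\in{\rm G}(n,m)$ and write $f|_\xi$ as a function on the $m$-dimensional space $\xi$.

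\textbf{Step 1: the Ball-body form \eqref{fxi-representations-Ball-eq}.} Integrating in polar coordinates in $\xi$ gives
$$
{\rm R}_mf(\xi)=\int_{S^{n-1}\cap\xi}\int_0^\infty r^{m-1}f(ru)\,dr\,d\HH^{m-1}(u),
$$
which is $\widetilde{\mathcal{R}}_m$ applied to $u\mapsto\int_0^\infty r^{m-1}f(ru)\,dr$. This function is bounded by \eqref{log-concave-f-exp-above}, so it lies in $L_\infty(S^{n-1})$. By \eqref{Ball-body}, the inner integral equals $\frac{f(o)}m\varrho_{K_m(f)}(u)^m$. Since $o\in{\rm int}\,K_m(f)$, we have $\varrho_{K_m(f)}>0$, and \eqref{polar-volume} applied in $\xi$ to $K_m(f)\cap\xi$ yields $f(o)\HH^m(K_m(f)\cap\xi)$.

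\textbf{Step 2: the level-set forms \eqref{fxi-representations-area-levelset-eq} and \eqref{fxi-representations-Rmlevelset-eq}.} The layer-cake formula $f(x)=\int_0^\infty\mathbf 1_{\{f>s\}}(x)\,ds$ and Tonelli on $\xi\times(0,\infty)$ give ${\rm R}_mf(\xi)=\int_0^\infty\HH^m(\{f>s\}\cap\xi)\,ds$. For each $s$, the set $F=\{f>s\}\cap\xi$ is a bounded convex subset of $\xi$. Boundedness holds for $s>0$ by \eqref{log-concave-f-exp-above}, and we may discard $s\ge\sup f$, where $F$ is empty. It remains to express $\HH^m(F)$ through \eqref{polar-volume}.

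There are two cases, using the generalized radial function of $\{f>s\}$ restricted to directions in $\xi$.
\begin{itemize}
\item If $s<f(o)$, then $o\in F$, and in fact $o\in{\rm int}\,F$ for $s\le s_0$ by \eqref{log-concave-f-ball-in-lvelset}; more generally $o\in{\rm int}_\xi F$ whenever $o\in{\rm int}\,D_f$ and $f(o)>s$, by continuity of $f$ near $o$. Hence $\varrho_F=\varrho_{\{f>s\}}|_{S^{n-1}\cap\xi}$ is positive, and \eqref{polar-volume} applies directly.
\item If $f(o)\le s<\sup f$, the origin lies outside $F$. This is the main obstacle: we must verify that the signed formula \eqref{polar-volume} still yields $\HH^m(F)$, and the equality $\varrho_{\{f>s\}}(u)=\varrho_{F}(u)$ for $u\in\xi$ is immediate from the definitions. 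I would check that for a convex set $F$ not containing $o$, each line $\R u$ meeting $F$ hits it in a segment $[a u,bu]$ with $0<a\le b$ (or symmetric), and that $\varrho_F(u)=b$, $\varrho_F(-u)=-a$. Then $\frac1m(b^m-a^m)$ plus the contribution of $-u$ (sign $-1$, giving $-\frac1m a^m$ vs.\ $+\frac1m b^m$) must reproduce the polar-coordinate integral $\int_a^b r^{m-1}dr$. To see this, I would pair $u$ with $-u$ and carefully check the sign convention. If the convention only works when $o\in{\rm cl}\,F$, I would instead use the paper's statement of \eqref{polar-volume} as given for arbitrary bounded convex $F$ with nonempty interior, and handle $F$ with empty relative interior in $\xi$ as a null set.
\end{itemize}
Integrating over $s$ then gives \eqref{fxi-representations-Rmlevelset-eq}.

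\textbf{Step 3: continuity in $\xi$.} Write $\xi=\Theta\xi_0$ with $\Theta\in O(n)$. Then
$$
{\rm R}_mf(\Theta\xi_0)=\int_{\xi_0}f(\Theta y)\,dy.
$$
As $\Theta_k\to\Theta$, we have $f(\Theta_ky)\to f(\Theta y)$ for every $y$ with $\Theta y\notin\partial D_f$, since a log-concave $f$ is continuous on ${\rm int}\,D_f$ and on the complement of ${\rm cl}\,D_f$. The remaining set $\{y\in\xi_0:\Theta y\in\partial D_f\}=\Theta^{-1}(\partial D_f)\cap\xi_0$ is $\HH^m$-null, because $\partial D_f\cap\Theta\xi_0$ is the relative boundary of a convex set in $\Theta\xi_0$ containing $o$ in its relative interior. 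The uniform bound $f(\Theta_ky)\le e^{-\alpha\|y\|-\beta}$ from \eqref{log-concave-f-exp-above} is integrable on $\xi_0$, so dominated convergence gives continuity.
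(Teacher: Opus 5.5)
Your proof is correct. Steps 1 and 2 follow the paper: polar coordinates in $\xi$ combined with the defining formula for $\varrho_{K_m(f)}$, then the layer cake formula followed by the signed polar-volume formula for the level sets.

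Your antipodal pairing check is exactly why the paper works with the signed generalized radial function, and the check does go through. For $f(o)\le s<\sup f$, a line $\R u\subset\xi$ meets $\{f>s\}$ in a segment $\{ru:a\le r\le b\}$ with $0<a\le b$. Then $\varrho(u)=b$ and $\varrho(-u)=-a$, so the two contributions sum to $\frac1m(b^m-a^m)=\int_a^b r^{m-1}\,dr$, and no fallback is needed. Degenerate level sets never occur either. Log-concavity, applied on segments from a boundary point to $o$, gives $\sup_{D_f\cap\xi}f=\sup_{{\rm int}\,D_f\cap\xi}f$. Hence each $\{f>s\}\cap\xi$ is either empty or has non-empty relative interior.

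You genuinely differ from the paper in the continuity argument. The paper reads continuity off from ${\rm R}_mf(\xi)=f(o)\HH^m(K_m(f)\cap\xi)$, i.e.\ from continuity of central section volumes of $K_m(f)$. That step implicitly uses that $K_m(f)$ has a continuous radial function, which the paper takes from Ball's theorem that $K_m(f)$ is a convex body with $o$ in its interior. Your argument applies dominated convergence on a rotated fixed subspace $\xi_0$ and is self-contained. It uses only three facts:
\begin{itemize}
\item the uniform exponential bound on $f$;
\item continuity of $f$ off $\partial D_f$;
\item $\HH^m(\partial D_f\cap\xi)=0$, since this set is the relative boundary of a convex set containing $o$ in its relative interior.
\end{itemize}
The only point to add is the standard remark that continuity of $\Theta\mapsto{\rm R}_mf(\Theta\xi_0)$ on ${\rm O}(n)$ descends to ${\rm G}(n,m)$, because ${\rm G}(n,m)$ carries the quotient topology.
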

\noindent{\bf Remark.} The boundedness of the function $u\mapsto \int_0^\infty r^{m-1} f(ru)\,dr$ of $u\in S^{n-1}$ follows from \eqref{log-concave-f-exp-above}.
\begin{proof} For \eqref{fxi-representations-Ball-eq}, integration in polar coordinates yields that
\begin{align*}
{\rm R}_mf=&  \int_{S^{n-1}\cap\xi}\int_0^\infty r^{m-1} f(ru)\,dr\,d\HH^{m-1}(u)\\
=&\frac{f(o)}{m}\int_{S^{n-1}\cap\xi}\varrho_{K_m(f)}(u)^m\,d\HH^{m-1}(u)=f(o)\HH^m\left(K_m(f)\cap \xi\right).
\end{align*}
It follows from \eqref{fxi-representations-Ball-eq} that the function $\xi\mapsto ({\rm R}_mf)$ is continuous.

Next, \eqref{fxi-representations-area-levelset-eq} is a direct consequence of the layer cake formula, and in turn \eqref{polar-volume} and \eqref{fxi-representations-area-levelset-eq} imply \eqref{fxi-representations-Rmlevelset-eq}.
\end{proof}
 By Lemma \ref{fxi-representations}, we have the following result.
\begin{coro}
\label{Psimqf-Ball-body}
If  $m\in\{1,\ldots,n-1\}$ and $f$ is a  log-concave function on $\Rn$ such that $0<\int_{\Rn} f<\infty$ and $o\in{\rm int}\,D_f$, then
\begin{align}
\label{Psimqf-Ball-body-eq}
 \widetilde{\Psi}_{m,q}(f)=&f(o)^q\widetilde{\Psi}_{m,q}(K_m(f))&&\mbox{if }q\neq 0,\\
\label{Psimqf-Ball-body0-eq}
 \widetilde{\Psi}_{m,0}(f)=&\log f(o)+\widetilde{\Psi}_{m,0}(K_m(f))&&\mbox{if }q=0.
\end{align}
\end{coro}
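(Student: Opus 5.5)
The corollary is a direct substitution of \eqref{fxi-representations-Ball-eq} from Lemma~\ref{fxi-representations} into Definition~\ref{Psimqfdef-sectionint}.

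First I would check that $K_m(f)$ is a convex body with $o\in{\rm int}\,K_m(f)$. Boundedness comes from \eqref{log-concave-f-exp-above}, and $o\in{\rm int}\,K_m(f)$ from \eqref{log-concave-f-ball-in-lvelset}; convexity is Ball's theorem, as recalled before \eqref{Ball-body}. It follows that $\widetilde{\Psi}_{m,q}(K_m(f))$ is well defined in the sense of \cite{CLWX26}. Moreover, $0<\HH^m(K_m(f)\cap\xi)<\infty$ for every $\xi\in{\rm G}(n,m)$, consistent with \eqref{log-concave-f-xi-A0A1}.

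Fix $\xi\in{\rm G}(n,m)$. By \eqref{fxi-representations-Ball-eq},
$$
\|f|_\xi\|_1={\rm R}_mf(\xi)=f(o)\,\HH^m(K_m(f)\cap\xi)=f(o)\,{\rm V}_m(\xi\cap K_m(f)),
$$
where $f(o)\in(0,\infty)$ because $o\in{\rm int}\,D_f$ and $\sup f<\infty$.

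For $q\neq 0$, raise this identity to the power $q$ to get $\|f|_\xi\|_1^q=f(o)^q\,{\rm V}_m(\xi\cap K_m(f))^q$. Integrating against $\nu_m$ over ${\rm G}(n,m)$ gives \eqref{Psimqf-Ball-body-eq}. The integrals are finite because the integrand is continuous by Lemma~\ref{fxi-representations} and is bounded between positive constants by \eqref{log-concave-f-xi-A0A1}.

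For $q=0$, take logarithms instead: $\log\|f|_\xi\|_1=\log f(o)+\log {\rm V}_m(\xi\cap K_m(f))$. Integrating against the probability measure $\nu_m$ gives \eqref{Psimqf-Ball-body0-eq}, since $\nu_m({\rm G}(n,m))=1$ turns the constant term into exactly $\log f(o)$.

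No step is a real obstacle. The only point needing care is that $K_m(f)$ is a genuine convex body containing $o$ in its interior, which is supplied by Ball's theorem together with the two estimates above, and that is what makes $\widetilde{\Psi}_{m,q}(K_m(f))$ meaningful.
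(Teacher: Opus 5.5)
Your proposal is correct and follows the same route as the paper. The paper derives the corollary directly from the identity ${\rm R}_mf(\xi)=f(o)\,\HH^m(K_m(f)\cap\xi)$ in Lemma~\ref{fxi-representations}: raise it to the $q$-th power (or take logarithms when $q=0$) and integrate against the probability measure $\nu_{m}$.
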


According to \cite{BLY26+}, if $q\in \R$,  $m\in\{1,\ldots,n-1\}$ and $F_k,F\subset\R^n$, $k\in\N$, are convex bodies containing the origin in their interior, and $F_k$ tends to $F$, then
\begin{equation}
\label{PsimqF-continuous}
\lim_{k\to\infty} \widetilde{\Psi}_{m,q}(F_k)=\widetilde{\Psi}_{m,q}(F).
\end{equation}
Now, we show that $\widetilde{\Psi}_{m,q}(f)$ is continuous as a function of a log-concave $f$.

\begin{lemma}
\label{Psimqf-continuous2}
Let $q \in \R$,  $m\in\{1,\ldots,n-1\}$.  If $\varphi_k\xrightarrow{\rm epi} \varphi$ holds for $\varphi,\varphi_k\in {\rm Conv}_c(\R^n)$, $k\in\N$, and $o\in{\rm int}\,{\rm dom}\,\varphi$, then the log-concave functions $f=e^{-\varphi}$ and $f_k=e^{-\varphi_k}$ satisfy that
\begin{equation}
\label{Psimqf-continuous-eq}
\lim_{k\to\infty} \widetilde{\Psi}_{m,q}(f_k)=\widetilde{\Psi}_{m,q}(f).
\end{equation}
\end{lemma}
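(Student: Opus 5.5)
The idea is to reduce the statement to convergence of the Radon transforms ${\rm R}_mf_k$, uniformly in $\xi\in{\rm G}(n,m)$. Combined with uniform two-sided bounds of the type \eqref{log-concave-f-xi-A0A1}, this makes $t\mapsto t^q$ (or $\log t$) uniformly continuous on the relevant range, so \eqref{Psimqf-continuous-eq} follows by integrating over ${\rm G}(n,m)$.

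First, since $\varphi$ is coercive and $o\in{\rm int}\,{\rm dom}\,\varphi$, we have ${\rm int}\,{\rm dom}\,\varphi\neq\emptyset$. For large $k$ we also have ${\rm int}\,{\rm dom}\,\varphi_k\neq\emptyset$, because the level sets $\{\varphi_k\le s\}$ converge to the convex body $\{\varphi\le s\}$ for $s>\min\varphi$. Hence Lemma~\ref{epiconvergence-property} applies, after discarding finitely many $k$. It gives $\alpha>0$ and $\beta$ with $f,f_k\le e^{-\alpha\|x\|-\beta}$ for all $k$. This is a uniform integrable majorant on every $\xi$, namely $\int_\xi e^{-\alpha\|x\|-\beta}\,d\HH^m$, which is independent of $\xi$ and finite.

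For a lower bound, pick $r_0>0$ with $2r_0B^n\subset{\rm int}\,{\rm dom}\,\varphi$. By Lemma~\ref{epiconvergence-interior}, $\varphi_k\to\varphi$ uniformly on $r_0B^n$. Therefore $f_k\ge c>0$ on $r_0B^n$ for large $k$, and ${\rm R}_mf_k(\xi)\ge c\,\omega_m r_0^m$ for all $\xi$. Together with the majorant, this gives constants $0<A_0'\le{\rm R}_mf_k,\,{\rm R}_mf\le A_1'$ uniformly in $k$ and $\xi$.

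Next comes pointwise convergence on each $\xi$. By Lemma~\ref{epiconvergence-interior}, $f_k(x)\to f(x)$ for every $x\notin\partial({\rm dom}\,\varphi)$. The boundary of the convex set ${\rm dom}\,\varphi$ could carry positive $\HH^m$-measure within some $\xi$, namely when $\xi$ contains a flat facet. I avoid this by not working on a fixed $\xi$. Instead I use the polar representation \eqref{fxi-representations-Ball-eq}, ${\rm R}_mf(\xi)=\widetilde{\mathcal{R}}_m\rho_f(\xi)$ with $\rho_f(u)=\int_0^\infty r^{m-1}f(ru)\,dr$. Since $o$ is interior, each ray $\{ru:r\ge0\}$ meets $\partial({\rm dom}\,\varphi)$ in at most one point. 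So for every $u\in S^{n-1}$, $f_k(ru)\to f(ru)$ for a.e. $r$, and dominated convergence with the majorant $r^{m-1}e^{-\alpha r-\beta}$ gives $\rho_{f_k}(u)\to\rho_f(u)$ for every $u$. Moreover the $\rho_{f_k}$ are uniformly bounded.

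Convergence uniform in $\xi$ is the main obstacle, since pointwise convergence of $\rho_{f_k}$ plus dominated convergence on $\xi\cap S^{n-1}$ gives only pointwise convergence in $\xi$. Uniformity is not actually needed, however. Dominated convergence on ${\rm G}(n,m)$ with respect to $\nu_m$ suffices, using the uniform bounds $A_0',A_1'$ to dominate $({\rm R}_mf_k)^q$ or $|\log {\rm R}_mf_k|$ by a constant. If one wants uniformity anyway, note that $\rho_{f_k}^{1/m}$ is, up to the factor $f_k(o)^{1/m}$, the radial function of Ball's body $K_m(f_k)$. One would check that $f_k(o)\to f(o)$, which holds because $o$ is an interior point of ${\rm dom}\,\varphi$. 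Then Claim~\ref{convergence-by-radial} gives $K_m(f_k)\to K_m(f)$, and Corollary~\ref{Psimqf-Ball-body} together with \eqref{PsimqF-continuous} yields \eqref{Psimqf-continuous-eq} directly. This second route is the cleaner one, and I would present it, keeping dominated convergence as the fallback.
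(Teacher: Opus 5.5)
Your second route (the one you say you would present) is essentially the paper's proof: the uniform exponential majorant from Lemma~\ref{epiconvergence-property}, pointwise convergence off $\partial D_f$ from Lemma~\ref{epiconvergence-interior}, dominated convergence along rays to get $\varrho_{K_m(f_k)}\to\varrho_{K_m(f)}$, then Claim~\ref{convergence-by-radial}, Corollary~\ref{Psimqf-Ball-body} and \eqref{PsimqF-continuous}. It is correct, and your extra checks make explicit points the paper leaves implicit: that ${\rm dom}\,\varphi_k$ is eventually full-dimensional with $o$ in its interior, that $f_k(o)\to f(o)$, and that each ray meets $\partial\,{\rm dom}\,\varphi$ at most once. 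Your dominated-convergence fallback on ${\rm G}(n,m)$, using the uniform two-sided bounds, is also valid and would avoid citing \eqref{PsimqF-continuous}.
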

\begin{proof}
We deduce from Lemma~\ref{epiconvergence-property} that there exist $\alpha>0$ and $\beta\in\R$ such that 
\begin{equation}
\label{Psimqf-continuous-fkf-above}
f(x),f_k(x)\leq e^{-(\alpha\|x\|+\beta)} \mbox{ \ for any $x\in\R^n$ and $k\in\N$.}
\end{equation}
In addition, Lemma~\ref{epiconvergence-interior} yields that 
\begin{equation}
\label{Psimqf-continuous-fkf-limit}
\lim_{k\to\infty} f_k(x)=f(x) \mbox{ \ holds for any $x\not\in \partial D_f$.}
\end{equation}
First, we show that for any $u\in S^{n-1}$, we have $\lim_{k\to\infty} \varrho_{K_m(f_k)}(u)=\varrho_{K_m(f)}(u)$, which statement is equivalent with (cf. \eqref{Ball-body} and \eqref{Psimqf-continuous-fkf-limit})
\begin{equation}
\label{Psimqf-continuous-radial-limit}
\lim_{k\to\infty} \int_0^\infty r^{m-1} f_k(ru)\,dr=\int_0^\infty r^{m-1} f(ru)\,dr.
\end{equation}
For Lebesgue's Dominant Convergence theorem, the integrable pointwise upper bound follows from \eqref{Psimqf-continuous-fkf-above}, and the pointwise limit is a consequence of  \eqref{Psimqf-continuous-fkf-limit}, concluding the proof of \eqref{Psimqf-continuous-radial-limit}.

Since $\lim_{k\to\infty} \varrho_{K_m(f_k)}(u)=\varrho_{K_m(f)}(u)$ for any $u\in S^{n-1}$, we deduce that $K_m(f_k)$ tends to $K_m(f)$ by Claim~\ref{convergence-by-radial}, and hence Lemma~\ref{Psimqf-continuous2} follows from Corollary~\ref{Psimqf-Ball-body} and \eqref{Psimqf-continuous-fkf-limit}.
\end{proof}

In order to define our centro-sectional variational measures $\widetilde{A}_{m,q}^{e}(f,\cdot)$ on $\R^n$ and $\widetilde{A}_{m,q}^{s}(f,\cdot)$ on $S^{n-1}$  for an upper semicontinuous log-concave function $f=e^{-\varphi}$, $0<\int_{\Rn} f<\infty$ and $M=\sup f$, we recall the coarea formula.
For $\Omega={\rm int}\,D_f$,
 let  $\Omega'\subset\Omega$ be the Borel set of $x\in\Omega$, where $\nabla\varphi(x)$ exists, and hence $\HH^n(\Omega\backslash\Omega')=0$, and  
$\nabla f(x)=-f(x)\cdot\nabla\varphi(x)$ 
for $x\in\Omega'$, where $\nabla f(x)$ is continuous on $\Omega'$. If $\Upsilon$ is a Borel function on $\Omega$ that is either non-negative or bounded with compact support, then Federer's coarea formula says that
\begin{equation}
\label{coarea-Federer}
\int_0^M\int_{\Omega\cap \partial \{f>s\}}\Upsilon(x)\,d\HH^{n-1}(x)\,ds=\int_{\Omega} \Upsilon\cdot f\cdot \|\nabla\varphi\|\,d\HH^{n}.
\end{equation}

Let us consider two consequences of \eqref{coarea-Federer}. We recall that the coarea formula for  a BV function $f$ ($f$ is of bounded variation) says (see, for example, Section~5.1 in \cite{BFR26})
\begin{equation}
\label{coarea-formula-general}
\int_{\R^n}\Upsilon\, d|D f|=\int_{\R}
\int_{\partial^* \{f>t\}}\,\Upsilon\,d\mathcal H^{n-1}\,dt,
\end{equation} 
where $\Upsilon$ is a Borel measurable function on $\R^n$. However, in our special case when $f$ is  an upper semicontinuous log-concave function  on $\Rn$ with $0<\int_{\Rn} f<\infty$, we can prove the coarea formula \eqref{coarea-formula-general} in a slighly more precise form using \eqref{coarea-Federer} and the fact that the level sets are compact  convex sets.

\begin{lemma}[Coarea formula for log-concave functions]
\label{coarea-BV}
If $f=e^{-\varphi}$ is an upper semicontinuous log-concave function  on $\Rn$ with $0<\int_{\Rn} f<\infty$ and $M=\sup f$, and $\Upsilon$ is a Borel measurable function on $\R^n$ that is either non-negative or bounded, then 
\begin{equation}
\label{coarea-BV-eq}
\int_0^M\int_{\partial \{f\geq s\}}\Upsilon(x)\,d\HH^{n-1}(x)\,ds=\int_{D_f}\Upsilon f \|\nabla\varphi\|\,d\HH^{n}+
\int_{\partial D_f}\Upsilon f\,d\HH^{n-1},
\end{equation}
where all the three integrals are finite if $\Upsilon$ is bounded, and assuming that $\Upsilon\geq 0$, the left hand side of \eqref{coarea-BV-eq} is finite if and only if both integrals on the right hand side of \eqref{coarea-BV-eq} are finite. 
\end{lemma}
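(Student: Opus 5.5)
We split the level sets according to where their boundary points lie, and apply \eqref{coarea-Federer} only on the open set $\Omega={\rm int}\,D_f$. For $0<s<M$ the level set $F_s=\{f\geq s\}$ is a convex body. We decompose
$$
\partial F_s=(\partial F_s\cap\Omega)\cup(\partial F_s\cap\partial D_f).
$$
The plan has three steps: relate $\partial F_s\cap\Omega$ to $\Omega\cap\partial\{f>s\}$, identify the contribution of $\partial F_s\cap\partial D_f$ as a function of $s$, and integrate over $s$. By Tonelli, it suffices to treat $\Upsilon\geq 0$ first; the bounded case then follows by splitting $\Upsilon=\Upsilon^+-\Upsilon^-$, once we know that all terms are finite for $\Upsilon\equiv 1$.

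\emph{Interior part.} For $s$ with $\min$-level considerations aside, we need $\Omega\cap\partial F_s=\Omega\cap\partial\{f>s\}$ for all $s\in(0,M)$ except possibly countably many.
\begin{itemize}
\item If $x\in\Omega$ and $f(x)=s<M$, then by the first item of Claim~\ref{ConvexFunctionsBasic} (applied to $\varphi$ with level $-\log s$), $x\in\partial F_s$.
\item Conversely, since $\varphi$ is continuous on $\Omega$, any $x\in\Omega\cap\partial F_s$ satisfies $f(x)=s$.
\end{itemize}
Hence $\Omega\cap\partial F_s=\Omega\cap\{f=s\}$. The same description holds for $\partial\{f>s\}$ whenever $s$ is not the value $M$; indeed $\{f>s\}$ is relatively open in $D_f$, with closure $F_s$ inside $\Omega$. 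Thus \eqref{coarea-Federer} yields
$$
\int_0^M\int_{\partial F_s\cap\Omega}\Upsilon\,d\HH^{n-1}\,ds=\int_{D_f}\Upsilon f\|\nabla\varphi\|\,d\HH^n.
$$
Here we use that $\HH^n(\partial D_f)=0$, so integrating over $D_f$ or over $\Omega$ gives the same result.

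\emph{Boundary part.} For $x\in\partial D_f$ we show that $x\in\partial F_s$ if and only if $f(x)\geq s$, up to a null set of $s$ for each $x$.
\begin{itemize}
\item If $f(x)\geq s$, then $x\in F_s\subset{\rm cl}\,D_f$, and $x$ is not interior to $D_f$, so $x\in\partial F_s$.
\item If $f(x)<s$, then $x\notin F_s$, and $F_s$ is closed by upper semicontinuity, so $x\notin\partial F_s$.
\end{itemize}
Therefore $\partial F_s\cap\partial D_f=\{x\in\partial D_f: f(x)\geq s\}$, and Tonelli gives
$$
\int_0^M\int_{\partial F_s\cap\partial D_f}\Upsilon\,d\HH^{n-1}\,ds=\int_{\partial D_f}\Upsilon\int_0^M\mathbf 1_{\{f(x)\geq s\}}\,ds\,d\HH^{n-1}(x)=\int_{\partial D_f}\Upsilon f\,d\HH^{n-1}.
$$
Measurability of $(x,s)\mapsto\mathbf 1_{\{f(x)\geq s\}}$ is clear because $f$ is Borel. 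Adding the interior and boundary parts proves \eqref{coarea-BV-eq} for $\Upsilon\geq0$, and also the equivalence of finiteness, since both right-hand terms are nonnegative.

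\emph{Finiteness for bounded $\Upsilon$, the main obstacle.} It suffices to take $\Upsilon\equiv1$. The left side is then $\int_0^M\HH^{n-1}(\partial F_s)\,ds$. We bound $\HH^{n-1}(\partial F_s)$ by the surface area of a ball containing $F_s$, using monotonicity of surface area for convex bodies. By \eqref{log-concave-f-exp-above}, $F_s$ lies in the ball of radius
$$
R(s)=\max\bigl\{0,(-\beta-\log s)/\alpha\bigr\},
$$
and $R(s)^{n-1}$ grows only like $|\log s|^{n-1}$ as $s\to0^+$, which is integrable on $(0,M]$. Hence the left side is finite, and so are both right-hand terms.

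The delicate points are the exceptional values of $s$ and the distinction between $\partial\{f>s\}$ and $\partial\{f\geq s\}$. Both are handled because they differ only at $s=M$ or on $\partial D_f$, and these cases are treated separately above.
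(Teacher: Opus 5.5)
Your proposal is correct and follows the paper's proof. Like the paper, you split $\partial F_s$ into its part in ${\rm int}\,D_f$, handled by Federer's coarea formula, and its part in $\partial D_f$, handled by Tonelli via $x\in\partial F_s\iff f(x)\geq s$ there, and you get finiteness from $F_s\subset R(s)B^n$ with $R(s)=O(\log\frac1s)$. You are more careful than the paper in checking that $\partial\{f>s\}$ and $\partial F_s$ agree inside ${\rm int}\,D_f$ for $s<M$, although your aside that $\{f>s\}$ is relatively open in $D_f$ is false in general, since $f$ need not be continuous relative to $D_f$ at points of $D_f\cap\partial D_f$; what you actually use, continuity of $f$ on ${\rm int}\,D_f$, suffices.
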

\noindent{\bf Remark.} In particular, we have
\begin{equation}
\label{Dario-Boaz-Liran-eq}    
\int_{\R^n} f\cdot \|\nabla \varphi\|\,dx<\infty
\mbox{ \ and \ }
\int_{\partial D_f} f\,d\HH^{n-1}<\infty,
\end{equation}
where the first inequality in \eqref{Dario-Boaz-Liran-eq} is due to 
Cordero-Erausquin, Klartag \cite{CK15}, while the second is due to Rotem \cite{R23}, who used more involved arguments.
\begin{proof}
Let $F_s=\{f\geq s\}$ for $s\in(0,M)$ that is a convex body.
For any $x\in\partial D_f$, we have $f(x)=\max\{s\in(0,M]: x\in \partial F_s\}$.  Thus for any $s\in(0,M)$, applying \eqref{coarea-Federer} for $\partial \{f\geq s\}\cap({\rm int}\,D_f)$, and Fubini's theorem in $\R\oplus \partial D_f$ for $\partial \{f\geq s\}\cap(\partial D_f)$ yields \eqref{coarea-BV-eq}. It also follows that assuming that $\Upsilon\geq 0$, the left hand side of \eqref{coarea-BV-eq} is finite if and only if both integrals on the right are finite.

Therefore, it remains only to prove that $\int_0^M\HH^{n-1}(\partial F_s)\,ds<\infty$. By the monotonicity of the surface area and $F_s\subset F_{s_0}$ for $s<s_0$, this is equivalent to verify that 
\begin{equation}
\label{coarea-BV-finiteness}
\int_0^{\min\{M,e^{-1}\}}\HH^{n-1}(\partial F_s)\,ds<\infty.
\end{equation}
We recall from \eqref{log-concave-f-exp-above} that there exist $\alpha>0$ and $\beta\in \R$ such that
$f(x)\leq e^{-\alpha\|x\|-\beta}$ for $x\in\R^n$, and hence if $s\in(0,e^{-1})$ and $x\in \partial F_s$, then
$\|x\|\leq \frac{|\beta|}{\alpha}+\frac{1}{\alpha}\log\frac1{s}\leq \gamma \log\frac1{s}$ for $\gamma=\frac{|\beta|+1}{\alpha}$. Thus $\HH^{n-1}(\partial F_s)\leq (n-1)\omega_{n-1}\gamma^{n-1}(\log\frac1{s})^{n-1}$ for $s\in(0,e^{-1}]$, and as $(\log\frac1{s})^{n-1}$ is integrable on $(0,e^{-1}]$, we conclude \eqref{coarea-BV-finiteness}.
\end{proof}

 Another consequence of \eqref{coarea-Federer} is (i) in Claim~\ref{exterior-normal}.

\begin{claim}
\label{exterior-normal}
Let $f=e^{-\varphi}$ be an upper semicontinuous log-concave function  on $\Rn$ with $0<\int_{\Rn} f<\infty$ and $M=\sup f$. For $\HH^1$ a.e. $s\in (0,M)$, 
\begin{enumerate}
\item[(i)]  for $\HH^{n-1}$ a.e. $x\in (\partial \{f\geq s\})\cap({\rm int}\,D_f)$, the gradient $\nabla\varphi(x)$ exists and $\frac{\nabla\varphi(x)}{\|\nabla\varphi(x)\|}$ is the unique exterior unit normal at $x$ to $\{f\geq s\}$;
\item[(ii)] for $\HH^{n-1}$ a.e. $x\in (\partial \{f\geq s\})\cap (\partial D_f)$, there exists a unique exterior unit normal $u$ at $x$ to $\{f\geq s\}$, and it is the exterior unit normal at $x$ to ${\rm cl}\,D_f$.
\end{enumerate}
\end{claim}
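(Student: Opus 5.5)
For part (i), I will apply the Federer coarea formula \eqref{coarea-Federer} with $\Omega={\rm int}\,D_f$ to an indicator of a bad set. Let $\Omega'\subset\Omega$ be the Borel set where $\nabla\varphi$ exists, so $\HH^n(\Omega\backslash\Omega')=0$. Taking $\Upsilon=\mathbf{1}_{\Omega\backslash\Omega'}$ in \eqref{coarea-Federer} makes the right-hand side zero. Hence for $\HH^1$ a.e. $s\in(0,M)$ we get $\HH^{n-1}\bigl((\Omega\backslash\Omega')\cap\partial\{f>s\}\bigr)=0$.

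I will then pass from $\{f>s\}$ to $\{f\geq s\}$. For $s\in(0,M)$ both sets have the same interior $\{\varphi<-\log s\}\cap{\rm int}\,D_f$, because $\varphi$ is convex and $\inf\varphi<-\log s$. Since $\{f\geq s\}$ is a convex body, it is the closure of that interior, and therefore the two sets have the same boundary. So for a.e. $s$, $\nabla\varphi(x)$ exists for $\HH^{n-1}$ a.e. $x\in\partial\{f\geq s\}\cap{\rm int}\,D_f$.

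At such a point $x$ we have $\varphi(x)=-\log s>\min\varphi$, so $x$ is not a minimizer and $\nabla\varphi(x)\neq o$. Convexity then gives
\[
\{f\geq s\}=\{\varphi\leq\varphi(x)\}\subset\{y:\nabla\varphi(x)\cdot(y-x)\leq 0\},
\]
so $\nabla\varphi(x)/\|\nabla\varphi(x)\|$ is an exterior normal. To see it is the only one, take any exterior unit normal $v$. Because $x\in{\rm int}\,{\rm dom}\,\varphi$ is a differentiability point and $\varphi(x)>\min\varphi$, the sublevel set contains $x-\epsilon w$ for every $w$ with $w\cdot\nabla\varphi(x)>0$ and small $\epsilon$. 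This forces $v\cdot w\geq 0$ for all such $w$, hence $v$ is a positive multiple of $\nabla\varphi(x)$.

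For part (ii), I will use the Fubini-type structure from the proof of Lemma~\ref{coarea-BV}. For $x\in\partial D_f$ with $f(x)>0$, we have $x\in\partial\{f\geq s\}$ exactly when $s\in(0,f(x)]$; this holds because such $x$ lies in $\partial D_f$ and therefore cannot be interior to any level set. Let $E\subset\partial D_f$ be the $\HH^{n-1}$-null set of points of $\partial({\rm cl}\,D_f)$ without a unique exterior normal, from \eqref{uK-continuous}. Then $\partial\{f\geq s\}\cap E$ is $\HH^{n-1}$-null for every $s$.

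Now take $x\in(\partial\{f\geq s\}\cap\partial D_f)\backslash E$. Since $\{f\geq s\}\subset{\rm cl}\,D_f$, any exterior normal of ${\rm cl}\,D_f$ at $x$ is also an exterior normal of $\{f\geq s\}$ at $x$. The remaining issue, and the main obstacle, is uniqueness: I must exclude the possibility that the smaller set $\{f\geq s\}$ acquires additional normals at $x$.

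I will handle this by noting that for $\HH^{n-1}$ a.e. $x\in\partial\{f\geq s\}\cap\partial D_f$, the point $x$ is already a smooth boundary point of the convex body $\{f\geq s\}$ itself, since its non-smooth boundary points form an $\HH^{n-1}$-null set. At such a point $\{f\geq s\}$ has a unique normal $u$. The normal cone of ${\rm cl}\,D_f$ at $x$ is nonempty and contained in the normal cone of $\{f\geq s\}$ at $x$, which is the ray through $u$. Hence $u$ is the exterior normal to ${\rm cl}\,D_f$ as well. This argument works for every $s$, not just almost every $s$, and combining it with (i) gives the claim.
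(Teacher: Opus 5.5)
Your proof is correct, and for (i) it is the argument the paper intends. The paper only remarks that (i) follows from the coarea formula \eqref{coarea-Federer}; you make this precise by taking $\Upsilon=\mathbf{1}_{\Omega\backslash\Omega'}$ and identifying $\partial\{f>s\}$ with $\partial\{f\geq s\}$.

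The paper gives no separate argument for (ii). Your argument there is sound: the normal cone of ${\rm cl}\,D_f$ at $x$ is nonempty and contained in the normal cone of $\{f\geq s\}$ at $x$, and the convex body $\{f\geq s\}$ is smooth at $\HH^{n-1}$ a.e. boundary point. It actually gives the statement for every $s\in(0,M)$, not just almost every $s$. The null set $E$ and the observation that $x\in\partial\{f\geq s\}$ iff $s\in(0,f(x)]$ are therefore not needed.
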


Motivated by the coarea formula \eqref{coarea-BV-eq}, and following the footsteps of \cite{FR26} and \cite{HLXZ24}, we can now rephrase the definition of the centro-sectional variational measures in Definition \ref{tildeAdef} in terms of integrals. 

For the spherical measure $\widetilde{A}_{m,q}^{s}(f,\cdot)$ on $S^{n-1}$, we also provide a representation in terms of integrals on the sphere $S^{n-1}$. For this, let  $K\subset\R^n$ be a  convex set with $o\in{\rm int}\,K$. In this case,  the associated recession cone is $\Sigma_K=\{x\in K:\lambda x\in K \;\forall \lambda\geq 0\}$ (cf. Section~\ref{secPreliminaries}), and we also consider
$$
\Theta_K=\left\{\frac{x}{\|x\|}:x\in \partial K\right\}=S^{n-1}\backslash \Sigma_K,
$$
that is a non-empty open subset of $S^{n-1}$ where $\Theta_K=\emptyset$ if $K=\R^n$ and $\Theta_K=S^{n-1}$ if $K$ is compact. For $u\in\Theta_K$, we have an $x\in \partial K$ such that $u=\frac{x}{\|x\|}$, and we define
\begin{align*}
\varrho_K(u)=&\|x\|\mbox{ \ and \ }
r_K(u)=x=\varrho_K(u)u,\\
\alpha_K(u)=&u_K(x)\mbox{ \ provided that }x\in\partial'K,
\end{align*}
and hence $h_K(\alpha_K(u))=u_K(x)\cdot x$ provided that $x\in\partial'K$.
We observe that $r_K(u)\in\partial'K$ for $\HH^{n-1}$ a.e. $u\in\Theta_K$ as the radial projection from $\partial D_f$ to $S^{n-1}$ is locally Lipschitz.
If $K\neq \R^n$ and $\Upsilon: \partial K\to[0,\infty)$ is Borel measurable, then \cite{HLYZ16} establishes
\begin{equation}
\label{HLYZ-cone-volume0}
\int_{\partial K}\Upsilon(x)h_K(u_K(x))\,d\HH^{n-1}(x)=
\int_{\Theta_K}\Upsilon\left(r_K(u)\right)\varrho_K(u)^n\,d\HH^{n-1}(u).
\end{equation}
Here \cite{HLYZ16} only considers the case when $K$ is compact, but this case easily generalizes to the non-compact case by first considering functions $\Upsilon$ with compact support in \eqref{HLYZ-cone-volume0}. Combining \eqref{tildeAdef-s} and \eqref{HLYZ-cone-volume0}, we have the representation \eqref{tildeAdef-s-sphere} for  the spherical centro-sectional measure $\widetilde{A}_{m,q}^{s}(f,\cdot)$.

\begin{lemma}
\label{tildeAdef-integrals}
Let  $q\in \R$ and $m\in\{1,\ldots,n-1\}$. Let $f$ be an upper semicontinuous log-concave function  on $\Rn$ such that $0<\int_{\Rn} f<\infty$ and  $o\in{\rm int}\,D_f$. If $\Upsilon:\R^n\to\R$ and $\zeta:S^{n-1}\to\R$ are Borel measurable and either bounded or non-negative, then
\begin{align}
\label{tildeAdef-Rn}
\int_{\R^n}\Upsilon\,d\widetilde{A}_{m,q}^{e}(f,\cdot)=&\int_{\R^n}\Upsilon(\nabla\varphi(x)) \|x\|^{m-n}f(x) \left(\widetilde{\mathcal{R}}^*_m({\rm R}_mf)^{q-1}\right)\left(\frac{x}{\|x\|}\right)\,dx,\\
\label{tildeAdef-s}
\int_{S^{n-1}}\zeta\,d\widetilde{A}_{m,q}^{s}(f,\cdot)=&\int_{\partial D_f}\zeta(u_{D_f}(x))\|x\|^{m-n}f(x) \left(\widetilde{\mathcal{R}}^*_m({\rm R}_mf)^{q-1}\right)\left(\frac{x}{\|x\|}\right)\,dx\\
\label{tildeAdef-s-sphere}
=&\int_{S^{n-1}}(\zeta\circ\alpha_{D_f})\cdot \frac{\varrho_{D_f}^{m}\cdot\mathbf{1}_{\Theta_{D_f}}}{h_{D_f}\circ\alpha_{D_f}}\cdot (f\circ r_{D_f}) \cdot\widetilde{\mathcal{R}}^*_m({\rm R}_mf)^{q-1}\,d\HH^{n-1}.
\end{align}
\end{lemma}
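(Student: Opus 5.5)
The plan is to obtain \eqref{tildeAdef-Rn} and \eqref{tildeAdef-s} directly from Definition~\ref{tildeAdef}, and then to derive \eqref{tildeAdef-s-sphere} from \eqref{HLYZ-cone-volume0}.

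For \eqref{tildeAdef-Rn}, write $G(x)=\|x\|^{m-n}f(x)\left(\widetilde{\mathcal{R}}^*_m({\rm R}_mf)^{q-1}\right)(x/\|x\|)$ for $x\neq o$. By Definition~\ref{tildeAdef}, $\widetilde{A}^e_{m,q}(f,\cdot)$ is the push-forward of the measure $G\,dx$ on the Borel set $\Omega'\setminus\{o\}$ under the map $\nabla\varphi$. This map is Borel because of \eqref{nablaphi-continuous}, and $\HH^n(\Omega\setminus\Omega')=0$. The change of variables formula for push-forward measures then gives \eqref{tildeAdef-Rn}. One proves it first for indicators $\Upsilon=\mathbf 1_{\Omega}$, where it is the definition itself, then for simple functions, and then for non-negative Borel $\Upsilon$ by monotone convergence. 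The bounded case follows by splitting $\Upsilon=\Upsilon^+-\Upsilon^-$, once the total mass is known to be finite.

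Finiteness of the total mass comes from three facts.
\begin{itemize}
\item By \eqref{log-concave-f-xi-A0A1}, $({\rm R}_mf)^{q-1}$ lies between $\min\{A_0^{q-1},A_1^{q-1}\}$ and $\max\{A_0^{q-1},A_1^{q-1}\}$ for every $q$, so $\widetilde{\mathcal{R}}^*_m({\rm R}_mf)^{q-1}$ is bounded by \eqref{dualRm-sphere}.
\item $\|x\|^{m-n}$ is locally integrable, since $m\geq 1$.
\item $f$ is bounded and decays exponentially by \eqref{log-concave-f-exp-above}.
\end{itemize}
Exactly the same argument, applied to the push-forward of $G\,d\HH^{n-1}$ restricted to $\partial'D_f$ under the Borel map $u_{D_f}$ (continuous by \eqref{uK-continuous}), gives \eqref{tildeAdef-s}. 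Here one uses $\HH^{n-1}(\partial D_f\setminus\partial'D_f)=0$, and finiteness comes from \eqref{Dario-Boaz-Liran-eq}. Since $o\in{\rm int}\,D_f$, the factor $\|x\|^{m-n}$ is bounded on $\partial D_f$.

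For \eqref{tildeAdef-s-sphere}, I would apply \eqref{HLYZ-cone-volume0} with $K=D_f$ (if $D_f=\R^n$, both sides vanish) and with the non-negative Borel function
\[
\Upsilon(x)=\frac{\zeta(u_{D_f}(x))\|x\|^{m-n}f(x)\left(\widetilde{\mathcal{R}}^*_m({\rm R}_mf)^{q-1}\right)(x/\|x\|)}{h_{D_f}(u_{D_f}(x))}
\]
on $\partial'D_f$, first taking $\zeta\geq0$. Since $o\in{\rm int}\,D_f$, we have $h_{D_f}(u_{D_f}(x))=u_{D_f}(x)\cdot x>0$, so this is well defined. Substituting $x=r_{D_f}(u)$ gives $\|x\|^{m-n}\varrho_{D_f}(u)^n=\varrho_{D_f}(u)^m$ and $x/\|x\|=u$. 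Using also $r_{D_f}(u)\in\partial'D_f$ for a.e.\ $u\in\Theta_{D_f}$, the right-hand side becomes exactly \eqref{tildeAdef-s-sphere}. Bounded $\zeta$ are then handled by linearity.

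The main obstacle is \eqref{HLYZ-cone-volume0} for non-compact $D_f$, which the paper only sketches. I would truncate: apply the compact case to $\Upsilon\mathbf 1_{RB^n}$, or equivalently localize via the locally bi-Lipschitz radial map on $\partial D_f\cap RB^n$, and let $R\to\infty$ using monotone convergence on both sides. The remaining steps are routine measure theory, and the needed finiteness is supplied by Lemma~\ref{coarea-BV} and \eqref{log-concave-f-xi-A0A1}.
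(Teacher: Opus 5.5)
Your proposal is correct and follows the paper's route. As in the paper, \eqref{tildeAdef-Rn} and \eqref{tildeAdef-s} are the push-forward (change of variables) form of Definition~\ref{tildeAdef}, with the finiteness needed for bounded integrands coming from \eqref{log-concave-f-xi-A0A1}, \eqref{log-concave-f-exp-above} and \eqref{Dario-Boaz-Liran-eq}, and \eqref{tildeAdef-s-sphere} is \eqref{HLYZ-cone-volume0} applied to the integrand divided by $h_{D_f}(u_{D_f}(x))>0$, extended to non-compact $D_f$ by truncation (the only care needed is to apply the compact case to the body $D_f\cap R'B^n$ with $R'>R$, whose boundary and normals agree with those of $D_f$ inside $RB^n$, rather than to $D_f$ itself).
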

\noindent{\bf Remark.} It follows that 
if $q\neq 0$, $m\in\{1,\ldots,n-1\}$, $f$ is an  upper semicontinuous log-concave function  on $\Rn$ such that $0<\int_{\Rn} f<\infty$ and  $o\in{\rm int}\,D_f$ and $\lambda>0$, then
\begin{equation}
\label{Psi-Amq-homogeneity}
\begin{array}{rcl}
\widetilde{\Psi}_{m,q}(\lambda f)&=&\lambda^q\,\widetilde{\Psi}_{m,q}(f),\\[1ex]
\widetilde{A}_{m,q}^{e}(\lambda f,\cdot)&=&\lambda^q\,\widetilde{A}_{m,q}^{e}(f,\cdot) \mbox{ \ and \ }
\widetilde{A}_{m,q}^{s}(\lambda f,\cdot)=\lambda^q\,\widetilde{A}_{m,q}^{s}(f,\cdot).
\end{array}
\end{equation}

Let us discuss the finiteness  properties of the centro-sectional variational measures $\widetilde{A}_{m,q}^{e}(f,\cdot)$ and $\widetilde{A}_{m,q}^{s}(f,\cdot)$.

\begin{prop}
\label{tildeA-finiteness}
Let  $q\in \R$ and $m\in\{1,\ldots,n-1\}$. Let $f$ be an upper semicontinuous log-concave function  on $\Rn$ such that $0<\int_{\Rn} f<\infty$ and  $o\in{\rm int}\,D_f$. Then the associated centro-sectional variational Borel measures satisfy that ${\rm supp}\,\widetilde{A}_{m,q}^{e}(f,\cdot)\subset{\rm cl}\,D_f$,
\begin{align}
\label{tildeA-finiteness-Rn-moment}
\int_{\Rn}\|x\|\,d\widetilde{A}_{m,q}^{e}(f,x)<&\infty&&\mbox{for }q\in\R, \\
\label{tildeA-finiteness-RntildePsi}
\widetilde{A}_{m,q}^{e}(f,\R^n)=\widetilde{\Psi}_{m,q}(f)<&\infty &&\mbox{if }q\neq 0, \\
\label{tildeA-finiteness-RntildePsi0}
\widetilde{A}_{m,0}^{e}(f,\R^n)=&1, \\
\label{tildeA-finiteness-RntildePsis}
\widetilde{A}_{m,q}^{s}(f,S^{n-1})<&\infty&&\mbox{for }q\in\R. 
\end{align}
\end{prop}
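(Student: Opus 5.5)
Throughout, write $\Phi(u)=\widetilde{\mathcal{R}}^*_m({\rm R}_mf)^{q-1}(u)$ for $u\in S^{n-1}$. The first step is to bound $\Phi$ above and below by positive constants. By \eqref{log-concave-f-xi-A0A1}, $A_0\le {\rm R}_mf\le A_1$, so $({\rm R}_mf)^{q-1}$ lies between $\min\{A_0^{q-1},A_1^{q-1}\}$ and $\max\{A_0^{q-1},A_1^{q-1}\}$. Since $\widetilde{\mathcal{R}}^*_m$ in \eqref{dualRm-sphere} is an average against a probability measure times $\frac{m\omega_m}{n\omega_n}$, the same bounds, up to this constant, hold for $\Phi$. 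Hence every finiteness statement reduces to the corresponding statement with $\Phi\equiv 1$.

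For the support claim, note that $\nabla\varphi(x)$ is only defined for $x\in{\rm int}\,D_f$. Then $\nabla\varphi(x)$ lies in the subdifferential image, which is contained in ${\rm dom}\,\varphi^*$. One should be careful here: the natural statement is that the support lies in the closure of the gradient image. I would simply follow the convention that the integrand vanishes outside $D_f$ and read the claim as the support being contained in the closure of the relevant set. I would not spend effort on this point.

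For \eqref{tildeA-finiteness-Rn-moment}, use \eqref{tildeAdef-Rn} with $\Upsilon(y)=\|y\|$. This gives a bound by $C\int_{D_f}\|x\|^{m-n}f\|\nabla\varphi\|\,dx$. Split the integral at a ball $r_0B^n$ on which $\varphi$ is Lipschitz, which holds by \eqref{log-concave-f-ball-in-lvelset} since $o\in{\rm int}\,D_f$.
\begin{itemize}
\item Inside the ball, $\|\nabla\varphi\|$ and $f$ are bounded, and $\|x\|^{m-n}$ is integrable because $m\ge1$.
\item Outside the ball, $\|x\|^{m-n}\le r_0^{m-n}$, and the first inequality of \eqref{Dario-Boaz-Liran-eq} gives $\int f\|\nabla\varphi\|<\infty$.
\end{itemize}
Similarly, \eqref{tildeA-finiteness-RntildePsis} follows from \eqref{tildeAdef-s} with $\zeta\equiv1$. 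On $\partial D_f$ we have $\|x\|\ge r_0$, so the integral is at most $Cr_0^{m-n}\int_{\partial D_f}f\,d\HH^{n-1}$, which is finite by \eqref{Dario-Boaz-Liran-eq}.

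The identities \eqref{tildeA-finiteness-RntildePsi} and \eqref{tildeA-finiteness-RntildePsi0} are the substantive part. Take $\Upsilon\equiv1$ in \eqref{tildeAdef-Rn}, pass to polar coordinates $x=ru$, and apply Helgason's duality \eqref{Rm-dualRm-sphere}:
\begin{align*}
\widetilde{A}^e_{m,q}(f,\R^n)=\int_{S^{n-1}}\Phi(u)\int_0^\infty r^{m-1}f(ru)\,dr\,d\HH^{n-1}(u)=\int_{{\rm G}(n,m)}({\rm R}_mf)^{q-1}\,\widetilde{\mathcal{R}}_m\Big(\int_0^\infty r^{m-1}f(r\,\cdot)\,dr\Big)\,d\xi .
\end{align*}
By \eqref{fxi-representations-Ball-eq}, the inner Radon transform is exactly ${\rm R}_mf(\xi)$. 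So the total mass is $\int({\rm R}_mf)^q\,d\xi$, which equals $\widetilde{\Psi}_{m,q}(f)$ for $q\neq0$ and equals $\nu_m({\rm G}(n,m))=1$ for $q=0$.

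The step I expect to be the main obstacle is the normalization constant. The factor $\frac{m\omega_m}{n\omega_n}$ built into $\widetilde{\mathcal{R}}^*_m$ must make \eqref{Rm-dualRm-sphere} hold exactly with $\HH^{n-1}$ on the sphere and the Haar probability measure on ${\rm G}(n,m)$. I would check this by testing with $\eta\equiv1$ and $\Upsilon\equiv1$, using $\HH^{m-1}(S^{m-1})=m\omega_m$ and $\HH^{n-1}(S^{n-1})=n\omega_n$. With the stated constant both sides agree, so the total mass identity holds with no extra factor.

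A second point needing care is the set where $\nabla\varphi$ fails to exist. This set is null, so it does not affect the integral over $\{x\ne o\}\cap{\rm int}\,D_f$. It remains to account for the contribution of $\partial D_f$, which has Lebesgue measure zero; this is consistent with the polar-coordinate computation above, since that computation integrates $f$ over all of $\R^n$.
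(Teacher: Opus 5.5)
Your proposal is correct and follows the paper's proof step for step. You bound $\widetilde{\mathcal{R}}^*_m({\rm R}_mf)^{q-1}$ by a constant via \eqref{log-concave-f-xi-A0A1}, split off a small ball around $o$ and use \eqref{Dario-Boaz-Liran-eq} for the first moment and for $\widetilde{A}^s_{m,q}(f,S^{n-1})$, and compute the total mass via polar coordinates, the duality formula \eqref{Rm-dualRm-sphere} and \eqref{fxi-representations-Ball-eq}. One small fix: take the ball slightly smaller, e.g.\ $\frac{r_0}{2}B^n$, because $\varphi$ is Lipschitz only on compact subsets of ${\rm int}\,D_f$ and $r_0B^n$ may touch $\partial D_f$.

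The paper gives no argument for the support claim either, and your caution is justified. Read literally for the push-forward under $\nabla\varphi$, the claim fails: for $f=e^{-10\|x\|}\mathbf{1}_{B^n}$ the measure $\widetilde{A}^e_{m,q}(f,\cdot)$ is concentrated on $10S^{n-1}$. It can only mean that the $x$-side density $\|x\|^{m-n}f(x)\,\widetilde{\mathcal{R}}^*_m({\rm R}_mf)^{q-1}(x/\|x\|)$ vanishes off $D_f$, which is what Lemma~\ref{tildeA-e-calculation} actually uses.
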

\begin{proof} 
For $q\in \R$, the estimate $\widetilde{\Psi}_{m,q}(f)<\infty$ follows from \eqref{log-concave-f-xi-A0A1}.
According to Definition~\ref{tildeAdef}, we have
\begin{align*}
\widetilde{A}_{m,q}^{e}(f,\Rn)
=&\int_{D_f} \|x\|^{m-n}f(x) \left(\widetilde{\mathcal{R}}^*_m({\rm R}_mf)^{q-1}\right)\left(\frac{x}{\|x\|}\right)\,dx,\\
\widetilde{A}_{m,q}^{s}(f,S^{n-1})=
&\int_{\partial D_f}\|x\|^{m-n}f(x) \left(\widetilde{\mathcal{R}}^*_m({\rm R}_mf)^{q-1}\right)\left(\frac{x}{\|x\|}\right)\,d\HH^{n-1}(x).
\end{align*}

We write $d\xi$ for $\xi\in{\rm G}(n,m)$ to abbreviate $d\nu_{m}(\xi)$. To prove  \eqref{tildeA-finiteness-RntildePsi}, for $q\neq 0$, we deduce
first from \eqref{fxi-representations-Ball-eq} in Lemma~\ref{fxi-representations}, then from the duality formula \eqref{Rm-dualRm-sphere}, and finally from using polar coordinates that
\begin{align*}
\widetilde{\Psi}_{m,q}(f)=& \int_{{\rm G}(n,m)}({\rm R}_mf)^{q-1}\widetilde{\mathcal{R}}_m\int_0^\infty r^{m-1} f(ru)\,dr\, d\xi\\
=& \int_{S^{n-1}}\left(\widetilde{\mathcal{R}}_m^*({\rm R}_mf)^{q-1}\right)(u)\int_0^\infty r^{m-1} f(ru)\,dr\,d\HH^{n-1}(u)\\
=&\int_{\R^n} \|x\|^{m-n}f(x) \left(\widetilde{\mathcal{R}}^*_m({\rm R}_mf)^{q-1}\right)\left(\frac{x}{\|x\|}\right)\,dx
=\widetilde{A}_{m,q}^{e}(f,\R^n).
\end{align*} 

Now, we prove \eqref{tildeA-finiteness-RntildePsi0}. Using again polar coordinates, the duality formula \eqref{Rm-dualRm-sphere}, and \eqref{fxi-representations-Ball-eq} in Lemma~\ref{fxi-representations},  we deduce that
\begin{align*}
\widetilde{A}_{m,0}^{e}(f,\R^n)=&\int_{\R^n}\|x\|^{m-n}f(x) \left(\widetilde{\mathcal{R}}^*_m({\rm R}_mf)^{-1}\right)\left(\frac{x}{\|x\|}\right)\,dx \\
=& \int_{S^{n-1}}\left(\widetilde{\mathcal{R}}_m^*({\rm R}_mf)^{-1}\right)(u)\int_0^\infty r^{m-1} f(ru)\,dr\,d\HH^{n-1}(u)\\
=&\int_{{\rm G}(n,m)}({\rm R}_mf)^{-1}\widetilde{\mathcal{R}}_m\int_0^\infty r^{m-1} f(ru)\,dr\,d\nu_{m}\\
=&\int_{{\rm G}(n,m)}({\rm R}_mf)^{-1}({\rm R}_mf)\,d\nu_{m}
=1.
\end{align*} 

 Next, for $q\in \R$, let $r_*>0$ such that $r_*B^n\subset {\rm int}\,D_f$, and it follows from \eqref{log-concave-f-xi-A0A1} that if $u\in S^{n-1}$, then
\begin{equation*}
\label{dualtildeRm-bound-Afinite}
\widetilde{\mathcal{R}}^*_m({\rm R}_mf)^{q-1}(u)\leq \frac{m\omega_m}{n\omega_n}\cdot  \max\left\{A_0^{q-1}, A_1^{q-1}\right\}:=A.
\end{equation*} 
In the case of the spherical measure (cf. \eqref{tildeA-finiteness-RntildePsis}), we have
\begin{align*}
\widetilde{A}_{m,q}^{s}(f,S^{n-1})&\leq A\int_{\partial D_f}\|x\|^{m-n} f(x)\,d\HH^{n-1}(x)\\
\quad\leq & Ar_*^{m-n}\int_{\partial D_f}f(x)\,d\HH^{n-1}(x)<\infty
\end{align*}
by $1\leq m\leq n-1$ and \eqref{Dario-Boaz-Liran-eq} after Lemma~\ref{coarea-BV}.

Turning to the first moment of $\widetilde{A}_{m,q}^{e}(f,\cdot)$ (cf. \eqref{tildeA-finiteness-Rn-moment}), let $f=e^{-\varphi}$ for a lower semicontinuous convex function $\varphi$.
One finds some $M_*>0$ such that $\|\nabla f(x)\|=f(x)\|\nabla\varphi(x)\|\leq M_*$ for any $x\in r_*B^n$ where $\nabla\varphi(x)$ exists, thus \eqref{tildeAdef-Rn} yields that
\begin{align*}
\int_{\R^n}\|z\|\,d\widetilde{A}_{m,q}^{e}(f,z)=&\int_{D_f}\|\nabla\varphi(x)\|\cdot \|x\|^{m-n}f(x) \left(\widetilde{\mathcal{R}}^*_m({\rm R}_mf)^{q-1}\right)\left(\frac{x}{\|x\|}\right)\,dx\\
\leq & 
A\int_{D_f} \|x\|^{m-n} \|\nabla f(x)\|\,dx\\
\leq &AM_*\int_{r_*B^n} \|x\|^{m-n} \,dx+
Ar_*^{m-n}\int_{\R^n}\|\nabla f(x)\|\,dx<\infty
\end{align*} 
by $1\leq m\leq n-1$ and \eqref{Dario-Boaz-Liran-eq} after Lemma~\ref{coarea-BV}, completing the proof of  Proposition~\ref{tildeA-finiteness}.
\end{proof}

The following integral representation will be used at several places in this paper.

\begin{lemma}
\label{tildeA-e-calculation}
Let $q\in \R$ and $m\in\{1,\ldots,n-1\}$. Let $f=e^{-\varphi}$ be an upper semicontinuous log-concave function  on $\Rn$ such that $0<\int_{\Rn} f<\infty$, $o\in{\rm int}\,D_f$. %and $f(o)=\sup f=M>0$. 
If $\alpha_0,\beta_0>0$ and $\Upsilon:\R^n\to\R$ is Borel measurable such that $|\Upsilon(x)|\leq \alpha_0\|x\|+\beta_0$ holds for $x\in\R^n$, then
\begin{equation}
\label{tildeA-e-calculation-eq}
\int_{\Rn}\Upsilon\,d\widetilde{A}_{m,q}^{e}(f,\cdot)=\int_{{\rm G}(n,m)}\int_{\xi}
\left\|f|_\xi\right\|_1^{q-1}  \cdot \Upsilon(\nabla\varphi(x))\cdot f(x)\,dx\,d\xi,
\end{equation}
where  the integrands on both sides of \eqref{tildeA-e-calculation-eq} are integrable.
\end{lemma}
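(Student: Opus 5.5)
The plan is to start from the representation \eqref{tildeAdef-Rn} in Lemma~\ref{tildeAdef-integrals}, unfold the dual Radon transform, and then reorganize the integral as an integral over ${\rm G}(n,m)$ of integrals over $\xi$. This is exactly the computation used for \eqref{tildeA-finiteness-RntildePsi} in Proposition~\ref{tildeA-finiteness}, but with the weight $\Upsilon(\nabla\varphi(x))$ kept inside.

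\textbf{Integrability.} First I justify that all integrands are absolutely integrable, so that Fubini's theorem may be applied freely.

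On the left-hand side, $|\Upsilon(\nabla\varphi(x))|\leq \alpha_0\|\nabla\varphi(x)\|+\beta_0$. We also have the uniform bound $\widetilde{\mathcal{R}}^*_m({\rm R}_mf)^{q-1}\leq A$ from the proof of Proposition~\ref{tildeA-finiteness}. Hence the left-hand integrand is dominated by $A\|x\|^{m-n}(\alpha_0\|\nabla f(x)\|+\beta_0 f(x))$. This is integrable by the first-moment estimate \eqref{tildeA-finiteness-Rn-moment}, whose proof gives it, and by finiteness of $\widetilde{A}^e_{m,q}(f,\R^n)$. The latter follows from the same polar-coordinate bound, using $\int_{\R^n}\|x\|^{m-n}f<\infty$ since $m\geq 1$ and $f$ decays exponentially by \eqref{log-concave-f-exp-above}.

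On the right-hand side, $\|f|_\xi\|_1^{q-1}$ lies between $\min\{A_0^{q-1},A_1^{q-1}\}$ and $\max\{A_0^{q-1},A_1^{q-1}\}$ by \eqref{log-concave-f-xi-A0A1}. So it suffices to bound $\int_{{\rm G}(n,m)}\int_\xi (\alpha_0\|\nabla\varphi\|+\beta_0)f\,d\HH^m\,d\xi$, which the core identity below applied to $|\Upsilon|$ reduces to the left-hand side.

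One subtlety: $\nabla\varphi$ exists only $\HH^n$-a.e., so it need not exist on a given $m$-plane. However, the exceptional set $N$ is $\HH^n$-null, and the polar/Grassmannian integration formula gives $\HH^m(N\cap\xi)=0$ for $\nu_m$-a.e. $\xi$. Thus the inner integral is well defined for a.e. $\xi$.

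\textbf{Core identity.} The key formula is the integral-geometric (Blaschke--Petkantschin type) formula: for Borel $G\geq 0$ on $\R^n$,
\[
\int_{{\rm G}(n,m)}\int_\xi G\,d\HH^m\,d\xi=\frac{m\omega_m}{n\omega_n}\int_{\R^n}\|x\|^{m-n}G(x)\,dx.
\]
More precisely, I need the weighted version with $\Upsilon_0(\xi):=\|f|_\xi\|_1^{q-1}$:
\[
\int_{{\rm G}(n,m)}\Upsilon_0(\xi)\int_\xi G\,d\HH^m\,d\xi=\int_{\R^n}\|x\|^{m-n}G(x)\,\big(\widetilde{\mathcal{R}}^*_m\Upsilon_0\big)\Big(\frac{x}{\|x\|}\Big)\,dx.
\]
To prove it, I write $\int_\xi G\,d\HH^m=\widetilde{\mathcal{R}}_m\eta(\xi)$ with $\eta(u)=\int_0^\infty r^{m-1}G(ru)\,dr$, using polar coordinates in $\xi$. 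Then I apply Helgason's duality \eqref{Rm-dualRm-sphere} to get $\int_{S^{n-1}}\eta\,\widetilde{\mathcal{R}}^*_m\Upsilon_0\,d\HH^{n-1}$. Finally, polar coordinates in $\R^n$ rewrite this as $\int_{\R^n}\|x\|^{m-n}G(x)\,(\widetilde{\mathcal{R}}^*_m\Upsilon_0)(x/\|x\|)\,dx$.

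Duality \eqref{Rm-dualRm-sphere} is stated for $\eta\in L_\infty$. So I first apply it to truncations $\min\{\eta,k\}$ with $G\geq 0$ and pass to the limit by monotone convergence. The continuity of $\Upsilon_0$ (Lemma~\ref{fxi-representations}) ensures it is Borel and bounded.

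\textbf{Conclusion.} Apply the core identity to $G=\Upsilon^\pm(\nabla\varphi)f$ separately. By the integrability step the two resulting quantities are finite, so subtracting them yields \eqref{tildeA-e-calculation-eq} via \eqref{tildeAdef-Rn}.

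The main obstacle I expect is the integrability near the origin, together with the a.e.-existence of $\nabla\varphi$ on planes. The fix is the local bound $\|\nabla f\|\leq M_*$ on $r_*B^n$ combined with $\int_{r_*B^n}\|x\|^{m-n}<\infty$, and the global bound $\int_{\R^n}\|\nabla f\|<\infty$ from \eqref{Dario-Boaz-Liran-eq}.
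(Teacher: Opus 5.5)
Your proposal is correct and follows essentially the same route as the paper. You get integrability of the left side from Proposition~\ref{tildeA-finiteness}, then apply polar coordinates, Helgason duality \eqref{Rm-dualRm-sphere} with weight $\|f|_\xi\|_1^{q-1}$, and polar coordinates in $\xi$, separately to the positive and negative parts of $\Upsilon$. The only difference is how the radial function is made bounded so that the $L_\infty$ duality applies: the paper exhausts ${\rm int}\,D_f$ by convex bodies $C_k$ on which $\nabla\varphi$ is bounded, whereas you truncate $\eta$ at level $k$ and use monotone convergence. You also make explicit that $\nabla\varphi$ exists $\HH^m$-a.e.\ on $\nu_m$-a.e.\ $\xi$, which the paper's proof does not spell out.
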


\begin{proof} 
We deduce from Proposition~\ref{tildeA-finiteness} that
\begin{equation}
\label{tildeA-e-calculation-finite}
\int_{\Rn}|\Upsilon|\,d\widetilde{A}_{m,q}^{e}(f,\cdot)\leq \int_{\Rn}(\alpha_0\|x\|+\beta_0)\,d\widetilde{A}_{m,q}^{e}(f,x)<\infty.
\end{equation}
Let $\Upsilon_+=\max\{\Upsilon,0\}$ and $\Upsilon_-=\min\{\Upsilon,0\}$ be the positive and negative parts, and let $C_k\subset{\rm int}\,D_f$ be a convex body for $k\in\N$ such that $C_k\subset C_{k+1}$ and $\bigcup_{k\in\N}C_k={\rm int}\,D_f$. 
As ${\rm supp}\,\widetilde{A}_{m,q}^{e}(f,\cdot)\subset{\rm cl}\,D_f$ (cf. Proposition~\ref{tildeA-finiteness}), we deduce from \eqref{tildeA-e-calculation-finite} that Lemma~\ref{tildeA-e-calculation} (including integrability) follows, if for any $k\in\N$, we have
\begin{align}
\label{tildeA-e-calculation-Psi+}    
\int_{\Rn}\Upsilon_+\mathbf{1}_{C_k}\,d\widetilde{A}_{m,q}^{e}(f,\cdot)=&\int_{{\rm G}(n,m)}\int_{\xi}
\Upsilon_+(\nabla\varphi)\cdot\mathbf{1}_{C_k}\left\|f|_\xi\right\|_1^{q-1}  \cdot f\,d\HH^m\,d\xi,\\
\label{tildeA-e-calculation-Psi-}    
\int_{\Rn}\Upsilon_-\mathbf{1}_{C_k}\,d\widetilde{A}_{m,q}^{e}(f,\cdot)=&\int_{{\rm G}(n,m)}\int_{\xi}\Upsilon_-(\nabla\varphi)\cdot\mathbf{1}_{C_k}
\left\|f|_\xi\right\|_1^{q-1}  \cdot  f\,d\HH^m\,d\xi.
\end{align}
We only prove \eqref{tildeA-e-calculation-Psi+} for $J_{k,+}=\int_{\Rn}\Upsilon_+\mathbf{1}_{C_k}\,d\widetilde{A}_{m,q}^{e}(f,\cdot)$, and the argument for \eqref{tildeA-e-calculation-Psi-} is analogous. 

For the finiteness estimates we need, we note that according to \eqref{log-concave-f-xi-A0A1}, there exist $A_1>A_0>1$ depending on $m$ and $f$ such that if $\xi\in{\rm G}(n,m)$, then $A_0\leq ({\rm R}_mf)\leq A_1$, and hence
\begin{equation}
\label{tildeA-e-calculation-A0A1}
({\rm R}_mf)^{q-1}\leq \max\left\{A_0^{q-1},A_1^{q-1}\right\}.
\end{equation}
Next, as $C_k\subset{\rm int}\,D_f$ is compact, there exists $M_k>0$ such that
$\|\nabla\varphi(x)\|\leq M_k$ holds for any $x\in C_k$, thus
\begin{equation}
\label{tildeA-e-calculation-CkphiUpsilon}
\Upsilon_+(\nabla\varphi(x))\leq \alpha_0M_k+\beta_0\mbox{ \ for }x\in C_k.
\end{equation}
Finally, \eqref{log-concave-f-exp-above} yields the existence of $\alpha>0$ and $\beta\in\R$ such that
$f(x)\leq e^{-(\alpha\|x\|+\beta)}$ for $x\in\R^n$, and hence for any $u\in S^{n-1}$, we have
\begin{equation}
\label{tildeA-e-calculation-intf-ray}
\int_0^\infty r^{m-1}f(ru)\,dr\leq
\int_0^\infty r^{m-1}e^{-(\alpha r+\beta)}\,dr<\infty.
\end{equation}
The finiteness of the integrals below follows from \eqref{tildeA-e-calculation-A0A1}, \eqref{tildeA-e-calculation-CkphiUpsilon} and \eqref{tildeA-e-calculation-intf-ray}. We deduce from \eqref{tildeAdef-Rn}, then applying polar coordinates, after that using the duality formula \eqref{Rm-dualRm-sphere}, and finally polar coordinates in $\xi\in{\rm G}(n,m)$ that
\begin{align*}
J_{k,+}=&\int_{\R^n} \Upsilon_+(\nabla\varphi(x)))\mathbf{1}_{C_k}(x)\|x\|^{m-n}f(x) \left(\widetilde{\mathcal{R}}_m^*({\rm R}_mf)^{q-1}\right)\left(\frac{x}{\|x\|}\right)\,dx,\\
=&\int_{S^{n-1}}\int_0^\infty r^{m-1}\Upsilon_+(\nabla\varphi(ru))\mathbf{1}_{C_k}(ru)f(ru) \left(\widetilde{\mathcal{R}}_m^*({\rm R}_mf)^{q-1}\right)(u)\,dr\,du\\
=&\int_{{\rm G}(n,m)}\left\|f|_\xi\right\|_1^{q-1}  \cdot\widetilde{\mathcal{R}}_m\int_0^\infty r^{m-1}\Upsilon_+(\nabla\varphi(ru))\mathbf{1}_{C_k}(ru)f(ru)\,dr\,d\xi\\
=&\int_{{\rm G}(n,m)}\int_{\xi}\Upsilon_+(\nabla\varphi)\cdot \mathbf{1}_{C_k}\left\|f|_\xi\right\|_1^{q-1}  \cdot  f\,d\HH^m\,d\xi,
\end{align*}
where we consider $\int_0^\infty r^{m-1}\Upsilon(\nabla\varphi(ru))\mathbf{1}_{C_k}(ru)f(ru)\,dr$ as a bounded function of $u\in S^{n-1}$. We conclude
\eqref{tildeA-e-calculation-Psi+},
and in turn Lemma~\ref{tildeA-e-calculation}.
\end{proof}

Finally, we show that the supports of the centro-sectional measures can't be too small.

\begin{lemma}
\label{Amq-pair-measures-support}
Let $q\in \R$,  and $m\in\{1,\ldots,n-1\}$.  If  $f$ is an upper semicontinuous  log-concave function  on $\Rn$ with $0<\int_{\Rn} f<\infty$ and $o\in{\rm int}\,D_f$, then 
no linear $(n-1)$-subspace contains the union of the supports of
 $\widetilde{A}_{m,q}^{e}(f,\cdot)$  and $\widetilde{A}_{m,q}^{s}(f,\cdot)$.
\end{lemma}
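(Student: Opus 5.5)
We argue by contradiction. Suppose that for some $v\in S^{n-1}$ the union of the supports of $\widetilde{A}_{m,q}^{e}(f,\cdot)$ and $\widetilde{A}_{m,q}^{s}(f,\cdot)$ lies in the hyperplane $v^\bot$. Put
\[
\Phi(x)=\|x\|^{m-n}f(x)\left(\widetilde{\mathcal{R}}^*_m({\rm R}_mf)^{q-1}\right)\left(\frac{x}{\|x\|}\right).
\]
By \eqref{log-concave-f-xi-A0A1} and \eqref{dualRm-sphere}, the factor $\widetilde{\mathcal{R}}^*_m({\rm R}_mf)^{q-1}$ is bounded below by the positive constant $\frac{m\omega_m}{n\omega_n}\min\{A_0^{q-1},A_1^{q-1}\}$. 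Hence $\Phi>0$ on $D_f\setminus\{o\}$. Definition~\ref{tildeAdef} then gives two facts. First, $\nabla\varphi(x)\in v^\bot$ for a.e. $x\in{\rm int}\,D_f$. Second, $u_{D_f}(x)\in\{\pm v\}^c$ fails only on a null set; more precisely, $u_{D_f}(x)\in v^\bot$ for $\HH^{n-1}$-a.e. $x\in\partial'D_f$ with $f(x)>0$.

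The plan is to test this against the coarea formula in direction $v$. Take $\Upsilon(x)=|x\cdot v|$, or better the signed version: integrate $x\cdot v$ against the exterior normals of the level sets. For every $s\in(0,\sup f)$ the level set $F_s=\{f\geq s\}$ is a convex body, so
\[
\int_{\partial F_s}|u_{F_s}(x)\cdot v|\,d\HH^{n-1}(x)=2\HH^{n-1}(F_s|v^\bot)>0,
\]
using the projection formula for convex bodies. Integrate this over $s\in(0,\sup f)$ and apply the coarea formula, Lemma~\ref{coarea-BV}, with the Borel function $\Upsilon(x)=|u(x)\cdot v|$. Here $u(x)$ is the unit normal of the level set through $x$, which by Claim~\ref{exterior-normal} equals $\nabla\varphi/\|\nabla\varphi\|$ on ${\rm int}\,D_f$ and $u_{D_f}$ on $\partial D_f$ (define it as $0$ elsewhere). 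This gives
\[
0<2\int_0^{\sup f}\HH^{n-1}(F_s|v^\bot)\,ds=\int_{D_f}|\nabla\varphi\cdot v|\,f\,dx+\int_{\partial D_f}|u_{D_f}\cdot v|\,f\,d\HH^{n-1}.
\]
The contradiction hypothesis makes both terms on the right vanish. In the first term the integrand is zero a.e., since $\Phi>0$ there and $\nabla\varphi\in v^\bot$ a.e. In the second, the integrand vanishes $\HH^{n-1}$-a.e. wherever $f>0$, and the factor $f$ kills the rest. This is the desired contradiction.

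The step needing the most care is turning "support in $v^\bot$" into "the integrand vanishes a.e.". Concretely, $\widetilde{A}^e(f,\R^n\setminus v^\bot)=0$ together with $\Phi>0$ on $D_f\setminus\{o\}$ gives $\HH^n(\{x:\nabla\varphi(x)\notin v^\bot\})=0$. On the boundary side, $\widetilde{A}^s(f,S^{n-1}\setminus v^\bot)=0$ implies that $f(x)\|x\|^{m-n}\mathbf{1}\{u_{D_f}(x)\notin v^\bot\}=0$ for $\HH^{n-1}$-a.e. $x\in\partial D_f$. The weight $\|x\|^{m-n}$ is bounded below on bounded regions and is harmless, because it only multiplies $f$; no integrability issue arises, since we only need vanishing. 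One must also check that the point $o$ and the exceptional sets in Claim~\ref{exterior-normal} are null, which they are.

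An alternative route avoids the coarea formula on the interior. One can first note that $\nabla\varphi\in v^\bot$ a.e. forces $\varphi$ to be constant along lines parallel to $v$ inside ${\rm int}\,D_f$. Then $D_f$, restricted appropriately, would be unbounded in direction $v$ unless the boundary contributes, which contradicts \eqref{log-concave-f-exp-above}. The coarea argument above handles both pieces uniformly, so I would use it.
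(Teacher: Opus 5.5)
Your argument is correct, and it reaches the contradiction by a different mechanism than the paper's.

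\textbf{The paper's route.} The paper argues pointwise. Using continuity of $\nabla\varphi$ on $\Omega'$, it upgrades the a.e. statement to $\nabla\varphi\in v^\bot$ at every point of $\Omega'$. It then deduces that $f$ is constant along each chord $(y+\R v)\cap{\rm int}\,D_f$. Upper semicontinuity and boundedness of the level sets then show that both endpoints of every such chord lie on $\partial D_f$ with $f>0$. So the set of points of $\partial' D_f$ lying on lines parallel to $v$ through ${\rm int}\,D_f$ has positive $\HH^{n-1}$-measure, positive density, and normals outside $v^\bot$. This forces $\widetilde{A}^s_{m,q}(f,S^{n-1}\setminus v^\bot)>0$.

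\textbf{Your route.} You feed the Borel weight $|u\cdot v|$, with $u$ the level-set normal identified via Claim~\ref{exterior-normal}, into Lemma~\ref{coarea-BV}, and combine it with Cauchy's projection formula. This gives the identity $2\int_0^{\sup f}\HH^{n-1}(F_s|v^\bot)\,ds=\int_{D_f}|\nabla\varphi\cdot v|\,f\,dx+\int_{\partial D_f}|u_{D_f}\cdot v|\,f\,d\HH^{n-1}$. The left side is positive. The right side vanishes under the hypothesis, because the common density $\|x\|^{m-n}f\,\widetilde{\mathcal{R}}^*_m({\rm R}_mf)^{q-1}$ is positive wherever $f>0$ and $x\neq o$.

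\textbf{Comparison.} Your approach uses only almost-everywhere information: no chord-by-chord Fubini argument, no upgrade from a.e. to everywhere, and no analysis of chord endpoints. It treats interior and boundary contributions in one stroke. It also exposes the mechanism: via positivity of the weight, the lemma reduces to a quantitative functional Cauchy projection formula for the pair of push-forwards of $f\,dx$ under $\nabla\varphi$ and of $f\,\HH^{n-1}$ on $\partial D_f$ under $u_{D_f}$. The price is importing the projection formula and relying on the fine normal identification of Claim~\ref{exterior-normal}. In particular you should say explicitly that if $F_s$ has a unique normal at $x\in\partial D_f$, then $x\in\partial'D_f$ with the same normal, since $F_s\subset{\rm cl}\,D_f$.

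\textbf{Write-up suggestions.} Drop the exploratory remark about $\Upsilon(x)=|x\cdot v|$ and the garbled phrase about $\{\pm v\}^c$. Define $\Upsilon$ once as an explicit bounded Borel function: $|\nabla\varphi\cdot v|/\|\nabla\varphi\|$ on $\Omega'\cap\{\nabla\varphi\neq0\}$, $|u_{D_f}\cdot v|$ on $\partial'D_f$, and $0$ elsewhere. Also note that only positivity of $\|x\|^{m-n}$ matters, not a lower bound for it.
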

\begin{proof} 
The proof is indirect; namely, we suppose that there exists $v\in S^{n-1}$ such that ${\rm supp}\,\widetilde{A}_{m,q}^{e}(f,\cdot)\subset v^\bot$ and ${\rm supp}\,\widetilde{A}_{m,q}^{s}(f,\cdot)\subset v^\bot$, and seek a contradiction. Let $f=e^{-\varphi}$ for a convex function $\varphi$.

Let $\Omega'\subset{\rm int}\,D_f$ be the Borel set of the $\HH^n$ a.e. $x\in {\rm int}\,D_f$ where $\nabla \varphi(x)$ exists. In particular,  $\HH^1\big((y+\R v)\cap (D_f\backslash \Omega')\big)=0$ for $\HH^{n-1}$ a.e. $y\in ({\rm int}\,D_f)|v^\bot$. Since $\nabla \varphi$ is continuous on $\Omega'$ (cf. \eqref{nablaphi-continuous}), the density function involved in $\widetilde{A}_{m,q}^{e}(f,\cdot)$ is positive on $\Omega'\backslash\{o\}$ (cf. Definition~\ref{tildeAdef})  and ${\rm supp}\,\widetilde{A}_{m,q}^{e}(f,\cdot)\subset v^\bot$, we deduce that
$\nabla\varphi(x)\in v^\bot$ for each $x\in\Omega'$. It follows that $\varphi$, and in turn $f$, is constant along $(y+\R v)\cap ({\rm int}\,D_f)$ for $\HH^{n-1}$ a.e. $y\in ({\rm int}\,D_f)|v^\bot$, thus actually for all $y\in ({\rm int}\,D_f)|v^\bot$ as $\varphi$ and $f$ are continuous on ${\rm int}\,D_f$. As $f$ is upper continuous and the set $\{f\geq s\}$ is bounded for any $s>0$, we deduce that if $y\in ({\rm int}\,D_f)|v^\bot$, then
\begin{equation}
\label{Amq-pair-measures-support-fpos-boundary}
(y+\R v)\cap \partial D_f=\{y_+,y_-\}\mbox{ \ and \ }f(y_+),\,f(y_-)>0.
\end{equation}
In addition, if $X=(\partial' D_f)\cap(\R v+{\rm int}\,D_f)$ and $x\in X$, then \eqref{Amq-pair-measures-support-fpos-boundary} yields that $\HH^{n-1}(X)>0$, $u_{D_f}(x)\not\in v^\bot$ and $f(x)>0$,  and hence the density function involved in $\widetilde{A}_{m,q}^{s}(f,\cdot)$ is positive on $X$ (cf. Definition~\ref{tildeAdef}).  As $X\subset u_{D_f}^{-1}(S^{n-1}\backslash v^\bot)$, we deduce that
 $\widetilde{A}_{m,q}^{s}(f,S^{n-1}\backslash v^\bot)>0$, contradicting that ${\rm supp}\,\widetilde{A}_{m,q}^{s}(f,\cdot)\subset v^\bot$, and in turn proving Lemma~\ref{Amq-pair-measures-support}. 
\end{proof}

\section{Some basic estimates for log-concave functions}
\label{secBasic-Estimates}

The following  bounds on log-concave functions are consequences of the common sub-exponential behavior of the tail in the case of small variations, and are generalizations of \eqref{log-concave-f-exp-above} and \eqref{log-concave-f-xi-A0A1}. 

\begin{claim}
\label{exponential-upperbound}
Let $q\in\R$ and $m=1,\ldots,n-1$. Let $f=e^{-\varphi}$ be an upper semicontinuous log-concave function  on $\Rn$ such that $0<\int_{\Rn} f<\infty$ and $o\in{\rm int}\,D_f$,  and let $\sigma>0$ and $\zeta\in C_c(\R^n)$. Then there exist $t_0>0$, $A_1>A_0>0$, $\alpha>0$ and $\beta\in\R$ depending on $f$, $m$, $n$, $q$, $\sigma$ and $\zeta$ such that if  $\xi\in {\rm G}(n,m)$, $x\in\R^n$, $t\in[-t_0,t_0]$, $L\subset \sigma B^n$ is a compact convex set with $o\in L$, and $f_t=e^{-\varphi_t}$ where either $\varphi_t=(\varphi^*+th_L)^*$ or $\varphi_t=(\varphi^*+t\zeta)^*$, then  $o\in{\rm int}\,D_{f_t}$ and
\begin{align}
\label{exponential-upperbound-eq}
f_t(x)\leq &e^{-\alpha\|x\|-\beta},\\
\label{exponential-upperbound-xi-eq}
A_0\leq \left\|f_t|_\xi\right\|_1&\leq A_1,\\
\label{continuity-ftxi-eq0}
\lim_{t\to 0}\left\|f_t|_\xi\right\|_1=&\left\|f|_\xi\right\|_1.
\end{align}
\end{claim}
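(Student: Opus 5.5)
The plan is to sandwich $\varphi_t$ between two fixed convex functions, uniformly in $t\in[-t_0,t_0]$ and in $L$. Both bounds come from the monotonicity of the Legendre transform \eqref{Legendre-monotone} and the shift formula \eqref{Legendre-shift}.

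\emph{Bounds on the perturbation.} Since $o\in L\subset\sigma B^n$, we have $0\le h_L\le\sigma\|\cdot\|$. Since $\zeta\in C_c(\R^n)$, we have $|\zeta|\le c_\zeta$ for some constant $c_\zeta$. Hence in both cases, for $|t|\le t_0$,
$$
\varphi^*-t_0\sigma\|\cdot\|-t_0c_\zeta\;\le\;\varphi^*+t\eta\;\le\;\varphi^*+t_0\sigma\|\cdot\|+t_0c_\zeta ,
$$
where $\eta=h_L$ or $\eta=\zeta$.

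\emph{Upper bound on $f_t$.} By Claim~\ref{ConvexFunctionsBasic}, $o\in{\rm int}\,D_f={\rm int}\,{\rm dom}\,\varphi$ means $\varphi^*$ is coercive. So $\varphi^*\ge 2\alpha_1\|\cdot\|+\beta_1$ for some $\alpha_1>0$. Choose $t_0\sigma\le\alpha_1$. Then the lower bound above is $\ge\alpha_1\|\cdot\|+\beta_1-t_0c_\zeta$, which is coercive. Dualizing gives
$$
\varphi_t\ge(\alpha_1\|\cdot\|+\beta_1-t_0c_\zeta)^*\quad\text{on}\quad \alpha_1B^n .
$$
That bound is not yet what we want; we need a lower bound for $\varphi_t$ at infinity.

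For this, dualize the upper bound on $\varphi^*+t\eta$ instead:
$$
\varphi_t\ge(\varphi^*+t_0\sigma\|\cdot\|+t_0c_\zeta)^* .
$$
Note that $\varphi^*+t_0\sigma\|\cdot\|=(\varphi\,\square\, \mathbf{1}^\infty_{t_0\sigma B^n})^*$, where $\mathbf{1}^\infty$ is $0$ on the set and $\infty$ off it. Therefore
$$
\varphi_t\ge \inf_{\|w\|\le t_0\sigma}\varphi(\cdot-w)-t_0c_\zeta .
$$
Since $\varphi\ge\alpha'\|x\|+\beta'$ by \eqref{log-concave-f-exp-above}, this gives $\varphi_t(x)\ge\alpha'\|x\|+\beta'-\alpha't_0\sigma-t_0c_\zeta$. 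This yields \eqref{exponential-upperbound-eq}.

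\emph{Lower bound on $f_t$ near $o$.} Dualize the coercive lower bound on $\varphi^*+t\eta$:
$$
\varphi_t\le(\alpha_1\|\cdot\|+\beta_1-t_0c_\zeta)^*\le -\beta_1+t_0c_\zeta
$$
on $\alpha_1B^n$. Here we use that the Legendre transform of $\alpha_1\|\cdot\|+c$ equals $-c$ on $\alpha_1B^n$ and $\infty$ outside. Hence $\alpha_1 B^n\subset D_{f_t}$, so $o\in{\rm int}\,D_{f_t}$, and $f_t\ge e^{\beta_1-t_0c_\zeta}$ on $\alpha_1B^n$.

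\emph{Bounds \eqref{exponential-upperbound-xi-eq}.} Integrating the upper bound over $\xi$ gives $A_1$. Integrating the lower bound over $\xi\cap\alpha_1B^n$ gives $A_0=e^{\beta_1-t_0c_\zeta}\omega_m\alpha_1^m$.

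\emph{The limit \eqref{continuity-ftxi-eq0}.} This is the main obstacle. The natural tool is Lemma~\ref{epiconvergence-interior}, which requires showing $\varphi_t\xrightarrow{\rm epi}\varphi$ as $t\to0$.

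Since $\varphi^*+t\eta\to\varphi^*$ uniformly on compacts, and all these functions are uniformly coercive with $\varphi^*$ having $o$ in the interior of its domain, I would use the continuity of the Legendre transform under epiconvergence (Wijsman's theorem). This is available in \cite{AGM15}, or directly: uniform convergence $\varphi^*+t\eta\to\varphi^*$ on the whole space is not available for $h_L$, but the sandwich above reduces matters to compact sets.

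Alternatively, I would argue pointwise for $x\notin\partial D_f$. For $x\in{\rm int}\,D_f$, the supremum defining $\varphi_t(x)$ is attained in a fixed compact set, because of the uniform coercivity of $y\mapsto \varphi^*(y)+t\eta(y)-x\cdot y$ for $x$ in a compact subset of ${\rm int}\,D_f$. On that compact set $t\eta\to0$ uniformly, which gives $\varphi_t(x)\to\varphi(x)$. For $x\notin{\rm cl}\,D_f$, the lower bound via the infimal convolution above shows $\varphi_t(x)\to\infty$, using lower semicontinuity of $\varphi$.

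Then $f_t\to f$ off the null set $\partial D_f$. Since $\xi\cap\partial D_f$ is $\HH^m$-null for a convex body meeting $\xi$ in its interior, dominated convergence applies on $\xi$ with the majorant $e^{-\alpha\|x\|-\beta}$, and \eqref{continuity-ftxi-eq0} follows.
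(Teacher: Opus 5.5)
Your proof is correct, but it runs on the dual side where the paper works with level sets.

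\textbf{How the paper argues.} It picks $r$ with $2rB^n\subset\{f\ge f(o)/2\}$ and sets $t_0=r/\sigma$. It derives $rB^n\subset\{f_t\ge f(o)/2\}$ from the level-set identity $\{e^{-(\varphi^*+th_L)^*}\ge s\}=F_s\oplus t\cdot L$ of \cite{FR26}, and from the sandwich $e^{-N|t|}f\le f_t\le e^{N|t|}f$ when $\eta=\zeta$. It dominates all $f_t$ by the single function $e^{-(\varphi^*+t_0h_{\sigma B^n})^*}$ and reapplies \eqref{log-concave-f-exp-above} to it. Finally it gets \eqref{continuity-ftxi-eq0} from epiconvergence ($F_s\oplus t_k\cdot L\to F_s$) combined with Lemma~\ref{epiconvergence-interior}.

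\textbf{How you argue.} You get the lower bound near $o$ from the coercivity of $\varphi^*$ and the explicit conjugate of $\alpha_1\|\cdot\|+c$. The tail bound comes from an infimal-convolution estimate. The pointwise limit comes from a direct compactness argument for the supremum defining $\varphi_t(x)$.

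\textbf{What each route buys.} Your route treats $h_L$ and $\zeta$ simultaneously. It needs neither the result of \cite{FR26} nor Lemma~\ref{epiconvergence-interior}; the former is the nontrivial input for $t<0$, where $\varphi^*+th_L$ is not convex. It also makes explicit that $\xi\cap\partial D_f$ is $\HH^m$-null, which the paper's dominated-convergence step uses tacitly. The paper's route is shorter and reuses the level-set picture that drives Section~\ref{secVariational-formulas}.

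\textbf{Minor slips.}
\begin{enumerate}
\item In your upper-bound paragraph, the first dualization has the inequality reversed. It must read $\varphi_t\le(\alpha_1\|\cdot\|+\beta_1-t_0c_\zeta)^*$, as you use later.
\item The identity $(\varphi^*+t_0\sigma\|\cdot\|)^*=\inf_{\|w\|\le t_0\sigma}\varphi(\cdot-w)$ needs the infimal convolution to be lower semicontinuous. This holds because the ball is compact, and in any case you only need the resulting lower bound, which follows directly.
\item Wijsman's theorem does not apply as stated, since $\varphi^*+t\eta$ need not be convex for $t<0$ or for $\eta=\zeta$. Your alternative pointwise argument makes it unnecessary.
\item $D_f$ may be unbounded, so ``convex body'' in the last step should be ``convex set with nonempty interior''.
\end{enumerate}
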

\begin{proof}
Let $p=f(o)>0$, let $M=\sup f$, and let $F_s=\left\{f\geq s\right\}$ for $s\in(0,M)$. 
As $0<\int_{\Rn} f<\infty$ and $o\in{\rm int}\,D_f$, there exist $\sigma>r>0$ such that
\begin{equation}
\label{exponential-upperbound-r}
2rB^n\subset F_{\frac{p}2}\mbox{ \ and \ }L\subset \sigma B^n, \mbox{ \ and let $t_0= \frac{r}{\sigma}$.}
\end{equation}

First, we verify that $o\in{\rm int}\,D_{f_t}$. 
If $\varphi_t=(\varphi^*+th_L)^*$ and $t\in[-t_0,t_0]$, then we deduce from $F_s\oplus (-t_0\sigma)\cdot B^n\subset \{f_t\geq s\}$ (cf. \eqref{log-sum-level} in Example~\ref{example-c1L}), \eqref{exponential-upperbound-r} and $\sigma|t|\leq r$ that
\begin{equation}
\label{exponential-upperbound-t0r}
rB^n \subset  \left\{f_t\geq \frac{p}2\right\}, \mbox{ \ and hence }rB^n \subset  \left\{f_t\geq s\right\}\mbox{ for }s\in\left(0,\frac{p}2\right],
\end{equation}
thus $o\in{\rm int}\,D_{f_t}$. On the other hand, if  $\varphi_t=(\varphi^*+t\cdot \zeta)^*$, then  there exists some $N>1$ such that $|\zeta(x)|\leq N$ for $x\in\R^n$. It follows that $\varphi(x)-|t|N\leq \varphi_t(x)\leq \varphi(x)+|t|N$ where $|t|N\leq \frac{rN}{\sigma}$, and hence (cf. \eqref{Legendre-shift} and \eqref{Legendre-monotone})
\begin{equation}
\label{exponential-upperbound-zeta-est}
e^{- \frac{rN}{\sigma}}f\leq e^{- N|t|}f\leq f_t\leq e^{ N|t|}f\leq e^{ \frac{rN}{\sigma}}f,
\end{equation} 
proving that ${\rm int}\,D_{f_t}={\rm int}\,D_{f}$, thus $o\in{\rm int}\,D_{f_t}$  in this case as well.

Next we prove \eqref{exponential-upperbound-eq}. If $\varphi_t=(\varphi^*+th_L)^*$, then
 as $h_L\geq 0$ by $o\in L$ and $L\subset \sigma B^n$, we have
$f_t\leq \tilde{f}:=e^{-(\varphi^*+t_0h_{\sigma B^n})^*}$ for $t\in[-t_0,t_0]$. We deduce from \eqref{log-concave-f-exp-above} applied to $\tilde{f}$ the existence of $\tilde{\alpha}>0$ and $\tilde{\beta}\in\R$ depending on $f,\sigma$ such that
$f_t(x)\leq \tilde{f}(x)\leq e^{-\tilde{\alpha}\|x\|-\tilde{\beta}}$ for any $x\in\R^n$ and $t\in[-t_0,t_0]$. 
In addition, we deduce from \eqref{exponential-upperbound-zeta-est}
that if $\varphi_t=(\varphi^*+t\cdot \zeta)^*$, then
 we may choose $\alpha=\tilde{\alpha}$ and $\beta=\tilde{\beta}-\frac{rN}{\sigma}$ in \eqref{exponential-upperbound-eq}.

For the upper bound in \eqref{exponential-upperbound-xi-eq}, we may choose $A_1=e^{-\beta}m\omega_m\int_0^\infty e^{-\alpha r}r^{m-1}\,dr$ by \eqref{exponential-upperbound-eq}. For the lower bound in \eqref{exponential-upperbound-xi-eq} if $\varphi_t=(\varphi^*+th_L)^*$, \eqref{exponential-upperbound-t0r} yields that
 $\left\|f_t|_\xi\right\|_1\geq \widetilde{A}_0$ holds for $\widetilde{A}_0=\frac{p}2\cdot r^m\omega_m$, $t\in[-t_0,t_0]$ and $\xi\in{\rm G}(n,m)$, and hence also $({\rm R}_mf)\geq \widetilde{A}_0$. In turn, if we take $\varphi_t=(\varphi^*+t\cdot \zeta)^*$ in the lower bound of \eqref{exponential-upperbound-xi-eq}, then $A_0=e^{- \frac{rN}{\sigma}}\widetilde{A}_0$ works by \eqref{exponential-upperbound-zeta-est}.

Finally, we apply Lebesgue's Dominant Convergence theorem to prove \eqref{continuity-ftxi-eq0}. Here the suitable pointwise upper bound is provided by \eqref{exponential-upperbound-eq}. If $\varphi_t=(\varphi^*+t\cdot \zeta)^*$, then the required pointwise limit is provided by \eqref{exponential-upperbound-zeta-est}. 

Therefore, we assume that $\varphi_t=(\varphi^*+th_L)^*$ holds in \eqref{continuity-ftxi-eq0}, and prove that
if $\lim_{k\to\infty}t_k=0$ for $t_k\neq 0$, then
\begin{equation}
\label{exponential-upperbound-L-est-pointwise}
\lim_{k\to\infty}f_{t_k}(x)=f(x) \mbox{ \ if }x\not\in\partial D_f.
\end{equation}
As $\sup f_{t_k}\leq M$, the level set $\{f_{t_k}\geq s\}=\emptyset$ for any $s>M$. On the other hand, if $s\in(0,M)$, then the level set $\{f_{t_k}\geq s\}=F_s\oplus t_k\cdot L$ tends to $F_s$, and hence $\varphi_{t_k}$ epiconverges to $\varphi$ by Definition~\ref{epiconvergence-def}. Thus   Lemma~\ref{epiconvergence-interior} yields \eqref{exponential-upperbound-L-est-pointwise}, and in turn, we conclude Claim~\ref{exponential-upperbound}.
\end{proof}

\begin{coro}
\label{Taylorqb}
Let $c>0$, $q\in\R$, $m=1,\ldots,n-1$. Let $f=e^{-\varphi}$ be an upper semicontinuous log-concave function  on $\Rn$ such that $0<\int_{\Rn} f<\infty$ and $o\in{\rm int}\,D_f$. Let $\sigma>0$ and $\zeta\in C_c(\R^n)$. Then there exist $t_0>0$, $A>0$, $\alpha>0$ and $\beta\in\R$ depending on $q$, $m$, $n$, $c$, $f$, $\sigma$ and $\zeta$ such that if  $\xi\in {\rm G}(n,m)$, $x\in\R^n$, $t\in[-t_0,t_0]$, $L\subset \sigma B^n$ is a compact convex set with $o\in L$, and $f_t=e^{-\varphi_t}$ where either $\varphi_t=(\varphi^*+t(h_L+\log c))^*$ or $\varphi_t=(\varphi^*+t\zeta)^*$, then there exists $b_{t,\xi}>0$ with the properties $o\in{\rm int}\,D_{f_t}$ and
\begin{align}
\label{Taylorqb-formula}
\left\|f_t|_\xi\right\|_1^q-\left\|f|_\xi\right\|_1^q=&q\,b_{t,\xi}^{q-1}\left(\left\|f_t|_\xi\right\|_1-\left\|f|_\xi\right\|_1\right)\mbox{ \ if }q\neq 0,\\
\label{Taylorqb-formula0}
\log\left\|f_t|_\xi\right\|_1-\log\left\|f|_\xi\right\|_1=&b_{t,\xi}^{-1}\left(\left\|f_t|_\xi\right\|_1-\left\|f|_\xi\right\|_1\right)\mbox{ \ if }q=0,\\
\label{Taylorqb-continuity}
b_{t,\xi}\mbox{ is a continuous }& \mbox{function of $\xi\in {\rm G}(n,m)$ for a fixed $t$,}\\
\label{Taylorqb-upper}
\left\|f_t|_\xi\right\|_1^{q-1}
\leq &A \mbox{ \ and \ }b_{t,\xi}^{q-1}\leq A,\\
\label{Taylorqb-limit}
\lim_{t\to 0}\left\|f_t|_\xi\right\|_1=\lim_{t\to 0}b_{t,\xi}=&\left\|f|_\xi\right\|_1.
\end{align}
\end{coro}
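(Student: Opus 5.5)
The plan is to reduce Corollary~\ref{Taylorqb} to Claim~\ref{exponential-upperbound} combined with the mean value theorem. First I handle the case $\varphi_t=(\varphi^*+t(h_L+\log c))^*$. By \eqref{Legendre-shift}, $\varphi_t=(\varphi^*+th_L)^*-t\log c$, so $f_t=c^t\tilde f_t$ with $\tilde f_t=e^{-(\varphi^*+th_L)^*}$; this is exactly \eqref{perturb-with-c1L}. Hence $\|f_t|_\xi\|_1=c^t\|\tilde f_t|_\xi\|_1$. Claim~\ref{exponential-upperbound} applied to $\tilde f_t$ gives $t_0$, $\alpha$, $\beta$, $A_0$, $A_1$ and the limit $\lim_{t\to0}\|\tilde f_t|_\xi\|_1=\|f|_\xi\|_1$. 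For $|t|\le t_0$ we have $c^{-t_0}\le c^t\le c^{t_0}$ when $c\ge 1$, with the reverse when $c<1$, and $c^t\to 1$. Therefore $f_t\le \max\{c^{t_0},c^{-t_0}\}e^{-\alpha\|x\|-\beta}$, which can be absorbed into $\beta$. The same factor bounds $\|f_t|_\xi\|_1$ between two positive constants $A_0'\le A_1'$, the relation $o\in{\rm int}\,D_{f_t}=\,{\rm int}\,D_{\tilde f_t}$ holds, and $\|f_t|_\xi\|_1\to\|f|_\xi\|_1$. The case $\varphi_t=(\varphi^*+t\zeta)^*$ comes directly from Claim~\ref{exponential-upperbound}. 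In both cases we obtain uniform constants $0<A_0'\le A_1'$ with $A_0'\le \|f_t|_\xi\|_1,\|f|_\xi\|_1\le A_1'$ for all $\xi$ and all $|t|\le t_0$; for $t=0$ this uses \eqref{log-concave-f-xi-A0A1}, after shrinking $A_0'$ and enlarging $A_1'$ if necessary.

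Next I define $b_{t,\xi}$ explicitly, so that continuity in $\xi$ holds by construction rather than through an arbitrary intermediate point. Write $a=\|f|_\xi\|_1$ and $a_t=\|f_t|_\xi\|_1$. If $a_t\neq a$, set $b_{t,\xi}=\bigl(\frac{a_t^q-a^q}{q(a_t-a)}\bigr)^{1/(q-1)}$ when $q\neq0,1$, and $b_{t,\xi}=\frac{a_t-a}{\log a_t-\log a}$ when $q=0$. If $a_t=a$, set $b_{t,\xi}=a$. When $q=1$, \eqref{Taylorqb-formula} holds for any $b$, so I take $b_{t,\xi}=(a+a_t)/2$.

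By the mean value theorem, the ratio $\frac{a_t^q-a^q}{q(a_t-a)}$ equals $\theta^{q-1}$ for some $\theta$ between $a$ and $a_t$, and $\theta\mapsto \theta^{q-1}$ is strictly monotone. So $b_{t,\xi}$ is well defined and lies between $a$ and $a_t$, and the same holds for the logarithmic mean when $q=0$. Equivalently, $b_{t,\xi}=G(a,a_t)$, where $G$ is the continuous ``mean'' function on $(0,\infty)^2$ extended by $G(a,a)=a$ on the diagonal. Continuity of $G$ across the diagonal is the standard fact that the difference quotient of a $C^1$ function extends continuously.

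Both $\xi\mapsto a$ (Lemma~\ref{fxi-representations}) and $\xi\mapsto a_t$ (Lemma~\ref{fxi-representations} applied to $f_t$, using $o\in{\rm int}\,D_{f_t}$) are continuous, so \eqref{Taylorqb-continuity} follows. Since $b_{t,\xi}\in[A_0',A_1']$, both $a_t^{q-1}$ and $b_{t,\xi}^{q-1}$ are at most $A=\max\{A_0'^{\,q-1},A_1'^{\,q-1}\}$, which gives \eqref{Taylorqb-upper}. Finally, because $b_{t,\xi}$ lies between $a$ and $a_t\to a$, \eqref{Taylorqb-limit} follows. The only mildly delicate step is this continuity of $b_{t,\xi}$ across the set where $a_t=a$, and it is settled by the explicit continuous mean $G$ rather than a choice of mean-value point.
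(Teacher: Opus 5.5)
Your proposal is correct and follows the paper's proof: you write $f_t=c^t\tilde f_t$ via \eqref{perturb-with-c1L}, take the uniform bounds, positivity of the domain and the limit from Claim~\ref{exponential-upperbound}, and obtain $b_{t,\xi}$ from the mean value theorem so that it lies between $\|f|_\xi\|_1$ and $\|f_t|_\xi\|_1$. Your explicit choice of $b_{t,\xi}$ as the continuous mean $G(\|f|_\xi\|_1,\|f_t|_\xi\|_1)$ is a welcome refinement. The paper only invokes ``the Taylor formula'' and Lemma~\ref{fxi-representations} for \eqref{Taylorqb-continuity}, which tacitly requires such a canonical, rather than arbitrary, choice of the intermediate point.
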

\begin{proof} As readily ${\rm dom}\,(\varphi^*+t(h_L+\log c))^*={\rm dom}\,(\varphi^*+th_L)^*$,  we deduce $o\in{\rm int}\,D_{f_t}$ from Claim~\ref{exponential-upperbound}.

The formulas \eqref{Taylorqb-formula} and \eqref{Taylorqb-formula0} defining $b_{t,\xi}$ follow from the Taylor formula where $b_{t,\xi}$ lies between 
$\left\|f_t|_\xi\right\|_1$ and $\left\|f|_\xi\right\|_1$, and in turn \eqref{Taylorqb-continuity} follows from Lemma~\ref{fxi-representations}. Next, Example~\ref{example-c1L} and Claim~\ref{exponential-upperbound} yield that
$$
\left\|f_t|_\xi\right\|_1^{q-1}\leq \max\left\{1,c^{t(q-1)}\right\}\cdot \max\left\{A_0^{q-1},A_1^{q-1}\right\}\leq 2\max\left\{A_0^{q-1},A_1^{q-1}\right\}
$$
if $t_0>0$ is small enough,
which estimate verifies the existence of $A$ in \eqref{Taylorqb-upper} as $b_{t,\xi}$ lies between 
$\left\|f_t|_\xi\right\|_1$ and $\left\|f|_\xi\right\|_1$. Finally, \eqref{Taylorqb-limit} follows again from  Example~\ref{example-c1L} and Claim~\ref{exponential-upperbound}.
\end{proof}

As a counterpart of \eqref{exponential-upperbound-eq} in Claim~\ref{exponential-upperbound}, we have the following.

\begin{claim}
\label{exponential-lowerbound}
Let $f=e^{-\varphi}$ be a log-concave function  on $\Rn$ such that $0<\int_{\Rn} f<\infty$ and 
$f(x_0)=\sup f=M$ for some $x_0\in{\rm int}\,D_f$. If $x_0+rB^n\subset {\rm int}\,\{f\geq s_0\}$ for some $r>0$ and $s_0\in(0,M)$, then there exists $\gamma>0$ depending on $M$, $r$ and $s_0$ such that
for any $s\in[s_0,M)$, we have
\begin{equation}
\label{exponential-lowerbound-radius}
 x_0+\gamma(M-s)\,B^n\subset \{f\geq s\}.
\end{equation}
\end{claim}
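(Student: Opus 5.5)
Translate to $x_0=o$, so $f(o)=M$ and $rB^n\subset\{f\geq s_0\}$. The plan is to use log-concavity along segments from $o$: for $y\in rB^n$ and $\lambda\in[0,1]$,
$$f(\lambda y)\geq f(o)^{1-\lambda}f(y)^{\lambda}\geq M^{1-\lambda}s_0^{\lambda}=M\left(\frac{s_0}{M}\right)^{\lambda}.$$
Hence $\lambda rB^n\subset\{f\geq M(s_0/M)^{\lambda}\}$ for every $\lambda\in[0,1]$.

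Now fix $s\in[s_0,M)$. I will choose $\lambda=\lambda(s)\in(0,1]$ with $M(s_0/M)^{\lambda}=s$, namely
$$\lambda=\frac{\log(M/s)}{\log(M/s_0)}.$$
Then $\lambda(s)rB^n\subset\{f\geq s\}$, and it remains to show $\lambda(s)\geq \gamma(M-s)/r$ for a suitable $\gamma$.

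For this, use $\log(M/s)=-\log(1-\frac{M-s}{M})\geq\frac{M-s}{M}$. This gives
$$\lambda(s)r\geq \frac{r}{M\log(M/s_0)}(M-s),$$
so $\gamma=\frac{r}{M\log(M/s_0)}$ works and depends only on $M$, $r$ and $s_0$.

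The only care needed is at the endpoints. When $s=s_0$ we get $\lambda=1$, which is fine. Upper semicontinuity is not needed, because the inclusion is pointwise. There is no real obstacle; the key step is choosing the interpolation parameter $\lambda$ so that $M^{1-\lambda}s_0^{\lambda}=s$, followed by the elementary inequality $\log(1/(1-\tau))\geq\tau$.
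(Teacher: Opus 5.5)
Your proof is correct and is essentially the paper's argument. The paper also uses convexity of $\varphi$ along segments from $x_0$ to get $f(x)\geq Me^{-\gamma_0\|x-x_0\|}$ on $x_0+rB^n$ with $\gamma_0=\frac{\log(M/s_0)}{r}$, then applies $e^{-\tau}\geq 1-\tau$ (equivalent to your $\log\frac{1}{1-\tau}\geq\tau$), arriving at the same constant $\gamma=\frac{r}{M\log(M/s_0)}$; your parametrization by $\lambda\in(0,1]$ additionally makes explicit that the resulting ball stays inside $x_0+rB^n$, which the paper leaves implicit.
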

\begin{proof}
For $x\in x_0+rB^n$, the convexity of $\varphi$ and $\varphi(x_0)=-\log M$ yield that $\varphi(x)\leq -\log M+\frac{-\log s_0+\log M}{r}\,\|x-x_0\|$, implying $f(x)\geq Me^{-\gamma_0\|x-x_0\|}$ where $\gamma_0=\frac{\log(M/s_0)}r$. In turn, we deduce \eqref{exponential-lowerbound-radius} from  $e^{-\gamma t}> 1-\gamma t$ for $t>0$.
\end{proof}

\section{An estimate for moments on a difference set}
\label{secpower-integral}

This section is dedicated to estimate the variation of moments. 

\begin{lemma}
\label{tildeV-theta-parallelset}
 For $\theta\in[1,n]$ and $R>\sigma>e$, there exists $\gamma>0$ depending on $\theta,R,\sigma,n$ such that if $t\in \left[-\frac{1}{2R},\frac{1}{2R}\right]$ and $F\subset RB^n$ is a convex body and $L\subset \sigma B^n$ is a compact convex set with $o\in L$, then (cf. \eqref{FoplustL})
\begin{align}
\label{tildeV-theta-large-parallelset-eq}
\int_{F\Delta (F\oplus t\cdot L)}\|x\|^{\theta-n}\,dx\leq &\gamma\cdot |t|&&\mbox{if } \theta>1 ;\\
\label{tildeV-theta1-parallelset-eq}
\int_{F\Delta (F\oplus t\cdot L)}\|x\|^{1-n}\,dx\leq &\gamma\cdot |t|\cdot\log \frac1{|t|} &&\mbox{if $\theta=1$ and $t\neq 0$.}\\
\end{align}
In addition, if $\theta=1$, $t\neq 0$ and $\aleph B^n\subset F$ with $2|t|\leq \aleph\leq \frac{1}{R}$, then
\begin{equation}
\label{tildeV-theta1aleph-parallelset-eq}
\int_{F\Delta (F\oplus t\cdot L)}\|x\|^{1-n}\,dx\leq \gamma\cdot |t|\cdot\log \frac1{\aleph}.
\end{equation}
\end{lemma}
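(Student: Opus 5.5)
The plan is to trap the symmetric difference $F\Delta(F\oplus t\cdot L)$ in a thin shell around $\partial F$ and then integrate the weight $\|x\|^{\theta-n}$ over that shell by a coarea-type slicing. Since $o\in L\subset\sigma B^n$, we have $0\leq h_L\leq \sigma$. By \eqref{FoplustL} and the monotonicity property stated after it, both for $t>0$ and for $t<0$,
$$
F\Delta(F\oplus t\cdot L)\subset F\Delta(F\oplus t\cdot \sigma B^n).
$$
For $t>0$ the right-hand side is $(F+t\sigma B^n)\setminus F$. For $t<0$ it is $F\setminus(F\ominus|t|\sigma B^n)$, the set of points of $F$ at distance less than $|t|\sigma$ from $\partial F$. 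In both cases the difference set lies in
$$
S_t=\{x:{\rm dist}(x,\partial F)\leq \sigma|t|\}\cap (R+1)B^n,
$$
using $|t|\sigma\leq \sigma/(2R)<1$. So it suffices to bound $\int_{S_t}\|x\|^{\theta-n}\,dx$.

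Next I would slice $S_t$ by spheres centred at $o$. For each $\rho>0$, I claim that $\HH^{n-1}(S_t\cap\rho S^{n-1})\leq c(n,R,\sigma)\,|t|\,\rho^{n-2}$ up to a constant. The justification is as follows.
\begin{itemize}
\item The parallel body satisfies $V_n((F+sB^n)\setminus F)\leq c\,s$ uniformly for $F\subset RB^n$, by Steiner's formula, since all quermassintegrals are bounded by those of $RB^n$.
\item The inner parallel region satisfies $V_n(F\setminus(F\ominus sB^n))\leq s\,\HH^{n-1}(\partial F)\leq s\,\HH^{n-1}(\partial RB^n)$, by the coarea formula for the distance function and monotonicity of surface area.
\end{itemize}
This only gives a volume bound, not the spherical-slice bound. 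A cleaner route is to use polar coordinates directly. Write
$$
\int_{S_t}\|x\|^{\theta-n}\,dx=\int_0^{R+1}\rho^{\theta-1}\,\HH^{n-1}\big(\{u:\rho u\in S_t\}\big)\,d\rho,
$$
where the set is now a subset of $S^{n-1}$. Then split into the region $\rho\geq r$ and the region $\rho<r$, for a threshold $r$ to be chosen.

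On $\{\|x\|\geq r\}$ the weight is at most $r^{\theta-n}$ if $\theta<n$, and at most $(R+1)^{\theta-n}$ in general. So the contribution there is at most $\max\{r^{\theta-n},1\}\,c\,|t|$ by the volume bounds above. On $\{\|x\|<r\}$, use the trivial bound
$$
\int_{rB^n}\|x\|^{\theta-n}\,dx=\frac{n\omega_n}{\theta}\,r^\theta.
$$
For $\theta>1$ this split is too crude. Instead I would bound the slice directly: the annulus-slice at radius $\rho$ meets $S_t$ in a spherical set of $\HH^{n-1}$-measure at most $c\,|t|\,\rho^{n-2}$. The reason is that $S_t\cap B(o,\rho')$ for $\rho'\leq 2\rho$ is the $\sigma|t|$-neighbourhood of the convex surface $\partial F\cap B(o,2\rho)$, whose area is at most that of a sphere of radius $2\rho$ (by monotonicity, as in the area bound for $\partial(F\cap 2\rho B^n)$). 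This gives
$$
V_n\big(S_t\cap(2\rho B^n\setminus\rho B^n)\big)\leq c\,|t|\,\rho^{n-1}.
$$
Summing dyadically over $\rho=2^{-j}(R+1)$ with weight $\rho^{\theta-n}$ gives $\sum_j c\,|t|\,\rho^{\theta-1}$. Down to the scale $\rho\sim|t|$, below which we use the trivial ball bound $c\,|t|^\theta$, this yields:
\begin{itemize}
\item a convergent geometric sum $O(|t|)$ when $\theta>1$, which is \eqref{tildeV-theta-large-parallelset-eq};
\item about $\log_2(1/|t|)$ terms, each of size $O(|t|)$, when $\theta=1$, which is \eqref{tildeV-theta1-parallelset-eq}.
\end{itemize}

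For \eqref{tildeV-theta1aleph-parallelset-eq}, since $\aleph B^n\subset F$ and $\sigma|t|\leq\aleph/2$ after adjusting constants, every point of $S_t$ lies at distance at least of order $\aleph$ from $o$. The point is that $\partial F$ avoids ${\rm int}\,\aleph B^n$, so $S_t\cap(\aleph/2)B^n=\emptyset$ up to constants depending on $\sigma$. Hence the dyadic sum stops at scale $\aleph$, giving $O(|t|\log(1/\aleph))$.

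The main obstacle is the uniform local area bound
$$
\HH^{n-1}(\partial F\cap 2\rho B^n)\leq c\,\rho^{n-1},
$$
together with converting it into the neighbourhood-volume estimate $V_n\big(S_t\cap(2\rho B^n\setminus\rho B^n)\big)\leq c\,|t|\,\rho^{n-1}$ when $\rho\geq|t|$. For the area bound I would first try the fact that $\partial F\cap 2\rho B^n\subset\partial(F\cap 2\rho B^n)$, whose area is at most $\HH^{n-1}(\partial 2\rho B^n)$ by monotonicity of surface area under inclusion of convex bodies. For the neighbourhood-volume step I would try the observation that the relevant neighbourhood lies between $(F\cap 3\rho B^n)+\sigma|t|B^n$ and its inner parallel body, and then apply Steiner's formula to $F\cap 3\rho B^n$.
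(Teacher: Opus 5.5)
Your argument is correct in substance, but it takes a different route from the paper.

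\textbf{How the two proofs differ.} The paper never localizes radially. After reducing to $L=\sigma B^n$, it covers $F\Delta(F\oplus t\cdot L)$ by $2n$ closed sets $\Xi_i$: points whose nearest boundary point has exterior normal direction $u$ with $u\cdot e_i\geq 1/\sqrt{n}$. A nearest-point argument shows that every line parallel to $e_i$ meets $\Xi_i$ in length at most $4n\sigma|t|$. The paper then uses $\|x\|^{\theta-n}\leq\|x|e_i^\bot\|^{\theta-n}$ and Fubini to reduce everything to $\int_{e_i^\bot\cap 2RB^n}\|y\|^{\theta-n}\,d\HH^{n-1}(y)$. This is finite exactly when $\theta>1$, and it produces the logarithm for $\theta=1$ after a cylinder of radius $2n\sigma|t|$ (resp.\ $\aleph/2$) around the axis is cut out.

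You instead decompose dyadically in $\|x\|$. Your key input is the bound $V_n\big(S_t\cap(2\rho B^n\setminus\rho B^n)\big)\leq c\,\sigma|t|\,\rho^{n-1}$, obtained from Steiner's formula and monotonicity of quermassintegrals and surface area for the localized body $G=F\cap 3\rho B^n$. The paper's route is fully elementary: nearest points and Fubini, with no Steiner formula or inner parallel bodies. Yours makes the mechanism more transparent: the shell has volume about $|t|\rho^{n-1}$ per dyadic annulus, so the weight contributes $\sum_\rho|t|\rho^{\theta-1}$. It also turns the $\aleph$-refinement into a mere truncation of the dyadic sum.

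\textbf{First point to fix.} The per-sphere bound $\HH^{n-1}(S_t\cap\rho S^{n-1})\leq c|t|\rho^{n-2}$, which you state twice, is false. For $F=\rho B^n$, the whole sphere $\rho S^{n-1}=\partial F$ lies in $S_t$. So you cannot obtain the annulus estimate by integrating it. Drop it and prove the annulus estimate directly, as in your last paragraph:
\begin{itemize}
\item Take $\rho\geq\delta:=\sigma|t|$. A point of $S_t$ with $\|x\|\leq 2\rho$ has nearest boundary point $z\in\partial F$ with $\|z\|\leq 3\rho$, so $z\in\partial G$.
\item The outer part therefore lies in $(G+\delta B^n)\setminus G$. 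Its volume is controlled by Steiner's formula together with $W_j(G)\leq W_j(3\rho B^n)$.
\item The inner part lies in $G\setminus(G\ominus\delta B^n)$. Its volume is at most $\delta\,\HH^{n-1}(\partial G)\leq\delta\,\HH^{n-1}(\partial(3\rho B^n))$.
\end{itemize}
The dyadic range must then stop at $\rho\sim\sigma|t|$ rather than $|t|$, which only changes constants.

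\textbf{Second point to fix.} In \eqref{tildeV-theta1aleph-parallelset-eq}, the hypothesis $2|t|\leq\aleph$ does not give $\sigma|t|\leq\aleph/2$, because $\sigma>e$. So $S_t$ may reach the origin, and the phrase ``after adjusting constants'' does not cover this. Split into two cases:
\begin{itemize}
\item If $\aleph\geq 2\sigma|t|$, then $S_t\cap{\rm int}\,\frac{\aleph}{2}B^n=\emptyset$, and your truncated dyadic sum gives $c|t|\log\frac1\aleph$.
\item If $2|t|\leq\aleph<2\sigma|t|$, then $\log\frac1{|t|}\leq\log(2\sigma)+\log\frac1\aleph\leq(1+\log(2\sigma))\log\frac1\aleph$, using $\log\frac1\aleph\geq\log R>1$. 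Hence \eqref{tildeV-theta1aleph-parallelset-eq} follows from \eqref{tildeV-theta1-parallelset-eq}.
\end{itemize}
The paper's own proof has the same slip: it asserts $\frac{\aleph}{2}B^n\subset F\oplus t\cdot L$ from $\aleph\geq 2|t|$, which actually requires $\aleph\geq 2\sigma|t|$. The same case split repairs it.
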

\begin{proof}
Since $F\subset F\oplus tL\subset F\oplus t\sigma B^n$ if $t>0$, and 
$F\oplus t\sigma B^n\subset F\oplus tL \subset F$ if $t<0$, we may assume that $L=\sigma B^n$ and $t\neq 0$. We observe that if $x\in \R^n\backslash \partial F$ ,  and $z\in\partial F$ is a closest point of $\partial F$ to $x$, then
\begin{align}
\label{tildeV-theta-parallelset-x-z}
x-z\mbox{ \ is an exterior normal to $F$ at $z$ }&\mbox{if \ }x\not\in F;\\
\label{tildeV-theta-parallelset-z-x}
z-x\mbox{ \ is an exterior normal to $F$ at $z$ }&\mbox{if \ }x\in {\rm int}\,F.
\end{align}
Let $e_1,\ldots,e_n$ be an orthonormal basis of $\R^n$, and let $e_{i+n}=-e_i$ for $i=1,\ldots, n$. For $i=1,\ldots, 2n$, we define the closed set $\Xi_i\subset {\rm cl}\left(F\Delta (F\oplus t\cdot L)\right)$ as follows. We have $x\in \Xi_i$, if
\begin{itemize}
\item either $x\in \partial F$, and there exists an exterior normal $u\in S^{n-1}$ at $x$ to $F$ with $u\cdot e_i\geq \frac1{\sqrt{n}}$,
\item or $t>0$, $x\in (F\oplus t\cdot L)\backslash F$, and $(x-z)\cdot e_i\geq \frac1{\sqrt{n}}\,\|x-z\|$ holds for the closest point $z\in\partial F$ of $F$ to $x$,
\item or $t<0$, $x\in ({\rm int}\,F)\backslash (F\oplus t\cdot L)$, and $(z-x)\cdot e_i\geq \frac1{\sqrt{n}}\,\|z-x\|$ holds for some closest point $z\in\partial F$ of $ F$ to $x$.
\end{itemize}
Since for any $v\in \R^n$, there exists an $e_i$ such that $v\cdot e_i\geq \frac1{\sqrt{n}}\,\|v\|$, it follows from \eqref{tildeV-theta-parallelset-x-z} and \eqref{tildeV-theta-parallelset-z-x} that 
\begin{equation}
\label{tildeV-theta-parallelset-F-cupXii}
F\Delta (F\oplus t\cdot L)\subset \bigcup_{i=1}^{2n}\Xi_i.
\end{equation}

We claim for any $i=1,\ldots, 2n$ and $y\in \Xi_i|e_i^\bot$, we have
\begin{equation}
\label{tildeV-theta-parallelset-Xii-section}
\mathcal{H}^1((y+\R e_i)\cap \Xi_i)\leq
4n \sigma\cdot |t|.
\end{equation}
Let $t_0=\min\{t\in \R:y+t e_i\in \Xi_i\}$ and $t_1=\max\{\tau\in \R:y+\tau e_i\in \Xi_i\}$, and hence $\mathcal{H}^1((y+\R e_i)\cap \Xi_i)\leq t_1-t_0$. For $j=0,1$, let $x_j=y+t_j e_i$, and let $z_j\in\partial F$ be a closest point of $F$ to $x_j$  where 
$x_j-z_j=\pm \|x_j-z_j\|u_j$ for an exterior unit normal $u_j$ to $F$ at $z_j$ with $u_j\cdot e_i\geq \frac1{\sqrt{n}}$.
We write $z_j$ in the form  $z_j=y+w_j+s_je_i$ for $j=0,1$ where $w_j\in e_i^\bot$ and $s_j\in \R$; therefore,
$\sigma^2|t|^2\geq \|x_j-z_j\|^2=\|w_j\|^2+|t_j-s_j|^2$ implies that  $\|w_j\|\leq \sigma|t|$ and $|t_j-s_j|\leq \sigma|t|$.
To estimate $|s_1-s_0|$, if $s_1\geq s_0$, then $(z_1-z_0)\cdot u_0\leq 0$ and $e_i\cdot u_0\geq \frac1{\sqrt{n}}$ yield that 
$$
0\geq (z_1-z_0)\cdot u_0=(w_1-w_0+(s_1-s_0)e_i)\cdot u_0\geq -2\sigma|t|+\frac{s_1-s_0}{\sqrt{n}}, 
$$
and hence $s_1-s_0\leq 2\sqrt{n}\sigma |t|$. On the other hand, if $s_1\leq s_0$, then $(z_0-z_1)\cdot u_1\leq 0$ and $e_i\cdot u_1\geq \frac1{\sqrt{n}}$ yield that 
$$
0\geq (z_0-z_1)\cdot u_1=(w_0-w_1+(s_0-s_1)e_i)\cdot u_1\geq -2\sigma|t|+\frac{s_0-s_1}{\sqrt{n}}, 
$$
and hence $s_0-s_1\leq 2\sqrt{n}\sigma|t|$; therefore, we have $|s_1-s_0|\leq 2\sqrt{n}\sigma|t|$ independently whether $s_1\geq s_0$ or $s_1\leq s_0$. In summary, we deduce using $(z_1-z_0)\cdot e_i=s_1-s_0$ that
$$
t_1-t_0=(x_1-x_0)\cdot e_i=(x_1-z_1)\cdot e_i-(x_0-z_0)\cdot e_i+(z_1-z_0)\cdot e_i\leq 2\sigma|t|+2\sqrt{n}\sigma|t|,
$$
thus $\mathcal{H}^1((y+\R e_i)\cap \Xi_i)\leq t_1-t_0$ implies the claim \eqref{tildeV-theta-parallelset-Xii-section}.\\

Therefore, let $i\in\{1,\ldots,2n\}$ and $t\in \left[-\frac{1}{2R},\frac{1}{2R}\right]$, and hence 
\begin{equation}
\label{FtL-2RB}
\Xi_i\cup F\cup (F\oplus t\cdot L)\subset 2R B^n.
\end{equation}
As $\theta\leq n$, if $x\not\in\R e_i$ and $y=x|e_i^\bot$, then
\begin{equation}
\label{tildeV-theta-parallelset-xyeibot}
\|x\|^{\theta-n}\leq \|y\|^{\theta-n},
\end{equation}
and as the function $\tau\mapsto \tau^{\theta-n}$ is decreasing for $\tau>0$, it follows from the claim \eqref{tildeV-theta-parallelset-Xii-section} that if $y\in e_i^\bot$, then
\begin{equation}
\label{tildeV-theta-parallelset-xyeibot-sym}
\int_{(y+\R e_i)\cap \Xi_i}\|x\|^{\theta-n}\,d\HH^1(x)\leq \int_{y+2n\sigma|t|{\rm conv}\{-e_ie_i\}}\|x\|^{\theta-n}\,d\HH^1(x).
\end{equation}

We divide the rest of the argument into cases depending on the value of $\theta$.
If $1<\theta\leq n$, then
 the claim \eqref{tildeV-theta-parallelset-Xii-section}, the estimate \eqref{tildeV-theta-parallelset-xyeibot}, Fubini's theorem, and the use of polar coordinates in $e_i^\bot\cap (2RB^n)$ yield that
\begin{align*}
\int_{\Xi_i}\|x\|^{\theta-n}\,dx\leq &4n\sigma \cdot |t| \int_{e_i^\bot\cap 2R B^n}\|y\|^{\theta-n}\,d\mathcal{H}^{n-1}(y)\\
=&
4n\sigma\cdot  |t|\cdot (n-1)\omega_{n-1}\int_0^{2R}r^{\theta-n}r^{n-2}\,dr=\gamma'\cdot|t|
\end{align*}
for $\gamma'=\frac{4n(n-1)\omega_{n-1}(2R)^{\theta-1}\sigma}{\theta-1}$, proving \eqref{tildeV-theta-large-parallelset-eq} by \eqref{tildeV-theta-parallelset-F-cupXii}.

Now, let $\theta=1$ and $t\neq 0$. To prove \eqref{tildeV-theta1-parallelset-eq}, if $i\in\{1,\ldots,2n\}$, then let $\Xi_{i,0}=\{x\in \Xi_i:\|x|e_i^\bot\|\leq 2n\sigma|t|\}$ and $\Xi_{i,+}=\Xi_i\backslash \Xi_{i,0}$. We deduce from Fubini's theorem and \eqref{tildeV-theta-parallelset-xyeibot-sym} that
\begin{align}
\nonumber
\int_{\Xi_{i,0}}\|x\|^{1-n}\,dx\leq &\int_{2n\sigma|t|(B^n\cap e_i^\bot)+2n\sigma|t|{\rm conv}\{-e_i,e_i\}}\|x\|^{1-n}\,dx\\
\label{tildeV-theta1-parallelset-eq0}
\leq & 
\int_{4n\sigma|t|\,B^n}\|x\|^{1-n}\,dx
=4n^2\sigma\omega_n|t|.
\end{align}
On the other hand, if $x\in \Xi_{i,+}$, then simply using that $\|x\|^{1-n}\leq \|y\|^{1-n}$ holds for $y=x|e_i^\bot$, we deduce again from claim \eqref{tildeV-theta-parallelset-Xii-section} and \eqref{FtL-2RB} that
\begin{align}
\nonumber
\int_{\Xi_{i,+}}\|x\|^{1-n}\,dx\leq &4n\sigma \cdot |t| \int_{e_i^\bot\cap [(nR B^n)\backslash (2n\sigma |t|B^n)]}\|y\|^{1-n}\,d\mathcal{H}^{n-1}(y)\\
\nonumber
=&4n\sigma(n-1)\omega_{n-1}\cdot |t|\big(\log(nR)-\log(2n\sigma|t|)\big)\\
\label{tildeV-theta1-parallelset-eq+}
\leq& 8n^2\omega_{n-1}\sigma\cdot |t|\log\frac1{|t|},
\end{align}
as $\log(nR)-\log(2n\sigma|t|)=\log\frac{R}{2\sigma}+\log \frac1{|t|}\leq 2\log \frac1{|t|}$. The estimate  \eqref{tildeV-theta1-parallelset-eq+} combined with  \eqref{tildeV-theta-parallelset-F-cupXii} and \eqref{tildeV-theta1-parallelset-eq0} yields \eqref{tildeV-theta1-parallelset-eq}.

Still, when $\theta=1$ and $t\neq 0$, our final goal is to prove \eqref{tildeV-theta1aleph-parallelset-eq}, and hence we have $\aleph B^n\subset F$. If $i=1,\ldots,2n$, then now let $\Xi_{i,0}=\{x\in \Xi_i:\|x|e_i^\bot\|\leq \frac{\aleph}2\}$, and $\Xi_{i,+}=\Xi_i\backslash \Xi_{i,0}$. 
As $\frac{\aleph}2\,B^n\subset F\oplus t\cdot L$ follows from $\aleph\geq 2|t|$, 
\begin{equation}
\label{being-aleph-away}
\mbox{$\|x\|^{\theta-n}\leq (\frac{\aleph}2)^{\theta-n}$ holds for
$x\in \Xi_{i,0}$.}
\end{equation}
Now, the  claim \eqref{tildeV-theta-parallelset-Xii-section} and \eqref{being-aleph-away} imply that
\begin{equation}
\label{tildeV-theta1aleph-parallelset-eq0}
\int_{\Xi_{i,0}}\|x\|^{1-n}\,dx\leq 4n|t|\int_{\frac{\aleph}2(B^n\cap e_i^\bot)}\left(\frac{\aleph}2\right)^{1-n}\,dx=
4n\omega_{n-1}|t|.
\end{equation}
On the other hand, if $x\in \Xi_{i,+}$, then simply using that $\|x\|^{1-n}\leq \|y\|^{1-n}$ holds for $y=x|e_i^\bot$, we deduce again from the claim \eqref{tildeV-theta-parallelset-Xii-section} and  $\aleph\leq \frac1{R}$ that
\begin{align*}
\int_{\Xi_{i,+}}\|x\|^{1-n}\,dx\leq &4n \cdot |t| \int_{e_i^\bot\cap (2R B^n\backslash \frac{\aleph}2B^n)}\|y\|^{1-n}\,d\mathcal{H}^{n-1}(y)\\
=&4n(n-1)\omega_{n-1}\cdot |t|\left(\log(2R)+\log\frac2{\aleph}\right)\\
\leq &16n^2\omega_{n-1}\cdot |t|\cdot \log\frac1{\aleph}
\end{align*}
as $e\leq R\leq \frac1{\aleph}$,
which estimate combined with \eqref{tildeV-theta-parallelset-F-cupXii} and  \eqref{tildeV-theta1aleph-parallelset-eq0}  yields \eqref{tildeV-theta1aleph-parallelset-eq},
completing the proof of Lemma~\ref{tildeV-theta-parallelset}.
\end{proof}

\section{Variational formulas as $t\to 0$ }
\label{secVariational-formulas}

Before proving the variational formula with respect to a compact convex set $L$, we recall a variational formula from \cite{HLYZ16} for a convex body $F\subset\R^n$ with $o\in{\rm int}\,F$. If the function $g: S^{n-1}\to\R$ is continuous and $|t|$ is sufficiently small, then let $h_t(v)=h_F(v)+tg(v)$ for $v\in S^{n-1}$, and let
$$
F_{(t)}=\{x\in \R^n: x\cdot v\leq h_t(v) \  \forall v\in S^{n-1}\}
$$ 
that is a convex body with $o\in{\rm int}\,F_t$.
Then for $\HH^{n-1}$ a.e. $u\in S^{n-1}$ and $x=\varrho_F(u)u$ (and hence $\|x\|=\varrho_{F}(u)$), \cite{HLYZ16} proves that
\begin{equation}
\label{HLYZ-radial-der}
\left.\frac{d}{dt}\varrho_{F_{(t)}}(u)\right|_{t=0}=\frac{g(\alpha_F(u))\cdot \varrho_F(u)}{h_F(\alpha_F(u))}=\frac{g(u_F(x))\cdot \|x\|}{h_F(u_F(x))}.
\end{equation}

\begin{prop}
\label{variation-phi+thL} 
Let $q\in\R$ and $m\in\{1,\ldots,n-1\}$. Let $f=e^{-\varphi}$ be an upper semicontinuous log-concave function  on $\Rn$ such that $0<\int_{\Rn} f<\infty$, $o\in{\rm int}\,D_f$ and $f(o)=\sup f>0$. Let $L\subset \R^n$ be a compact convex set with $o\in L$. Then $f_t=e^{-\varphi_t}$ for $\varphi_t=(\varphi^*+th_L)^*$ and $t\in\R$ satisfies  that 

if $q\neq 0$, then
\begin{align}
\label{variation-phi+thL-eq}
\lim_{t\to 0}\frac{\widetilde{\Psi}_{m,q}(f_t)-\widetilde{\Psi}_{m,q}(f)}t
=&q\int_{\Rn}h_L\,d\widetilde{A}_{m,q}^{e}(f,\cdot)+q\int_{S^{n-1}}h_L\,d\widetilde{A}_{m,q}^{s}(f,\cdot);\\
\label{variation-phi+thL-eq2}
\lim_{t\to 0}\frac{\widetilde{\Psi}_{m,0}(f_t)-\widetilde{\Psi}_{m,0}(f)}t
=&\int_{\Rn}h_L\,d\widetilde{A}_{m,0}^{e}(f,\cdot)+\int_{S^{n-1}}h_L\,d\widetilde{A}_{m,0}^{s}(f,\cdot),
\end{align}
where both integrals occurring in \eqref{variation-phi+thL-eq} and in \eqref{variation-phi+thL-eq2} are finite.
\end{prop}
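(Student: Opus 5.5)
We reduce the statement, via Corollary~\ref{Taylorqb}, to a first variation of the sections $\|f_t|_\xi\|_1$. By \eqref{Taylorqb-formula} and \eqref{Taylorqb-formula0},
$$
\frac{\widetilde{\Psi}_{m,q}(f_t)-\widetilde{\Psi}_{m,q}(f)}t=q\int_{{\rm G}(n,m)} b_{t,\xi}^{q-1}\,\frac{\|f_t|_\xi\|_1-\|f|_\xi\|_1}{t}\,d\xi ,
$$
with the factor $q$ dropped and $b_{t,\xi}^{q-1}$ replaced by $b_{t,\xi}^{-1}$ when $q=0$. By \eqref{Taylorqb-upper} and \eqref{Taylorqb-limit}, $b_{t,\xi}^{q-1}$ is uniformly bounded and tends to $\|f|_\xi\|_1^{q-1}$. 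It therefore suffices to prove two things:
\begin{itemize}
\item[(a)] a uniform bound $|\|f_t|_\xi\|_1-\|f|_\xi\|_1|\leq C|t|$ for $\xi\in{\rm G}(n,m)$ and small $|t|$;
\item[(b)] the pointwise-in-$\xi$ limit of the difference quotient.
\end{itemize}
Dominated convergence then lets us pass to the limit under $\int d\xi$. Once (b) is identified as
$$
\int_\xi h_L(\nabla\varphi)\,f\,d\HH^m+\int_{\xi\cap\partial D_f}(\cdots)\,d\HH^{m-1},
$$
we reassemble. The first term becomes $\int h_L\,d\widetilde{A}^e_{m,q}$ by Lemma~\ref{tildeA-e-calculation}. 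The second becomes $\int h_L\,d\widetilde{A}^s_{m,q}$ by running the polar-coordinate/duality computation \eqref{Rm-dualRm-sphere} together with \eqref{tildeAdef-s-sphere}.

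\textbf{The section variation.} By \eqref{fxi-representations-area-levelset-eq} and \eqref{log-sum-level},
$$
\|f_t|_\xi\|_1=\int_0^{M_t}\HH^m\big((F_s\oplus tL)\cap\xi\big)\,ds ,
$$
where $\sup f_t=M$ for $t>0$ and $\sup f_t\leq M$ for $t<0$. Writing this in radial form via \eqref{fxi-representations-Rmlevelset-eq}, the integrand for fixed $s$ is
$$
\frac1m\int_{S^{n-1}\cap\xi}\Big(\varrho_{F_s\oplus tL}^m-\varrho_{F_s}^m\Big)\,d\HH^{m-1}.
$$
For each $s$ at which $F_s$ contains $o$ in its interior, the derivative is
$$
\int_{S^{n-1}\cap\xi}\varrho_{F_s}^{m}\,\frac{h_L(\alpha_{F_s})}{h_{F_s}(\alpha_{F_s})}\,d\HH^{m-1}
$$
by \eqref{HLYZ-radial-der} with $g=h_L$. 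Using the cone-volume identity \eqref{HLYZ-cone-volume0} in $\xi$ and integrating in $s$, the coarea formula (Lemma~\ref{coarea-BV}) splits $\int_0^M\int_{\partial F_s}$ into an interior part and a part on $\partial D_f$. Claim~\ref{exterior-normal} identifies the normals as $\nabla\varphi/\|\nabla\varphi\|$ and $u_{D_f}$, respectively. The $\|\nabla\varphi\|$ from the coarea weight combines with $h_L(\nabla\varphi/\|\nabla\varphi\|)$ to give $h_L(\nabla\varphi)$ by homogeneity.

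To justify the interchange it is cleanest to do all of this at the level of $\R^n$ rather than sectionwise. That is, differentiate
$$
\int_0^M\int_{F_s\oplus tL}\|x\|^{m-n}\,\Phi\!\left(\frac{x}{\|x\|}\right)dx\,ds,\qquad \Phi=\widetilde{\mathcal R}^*_m b_{t,\cdot}^{q-1},
$$
since the duality \eqref{Rm-dualRm-sphere} turns $\int_{{\rm G}(n,m)}b^{q-1}\,\HH^m(\cdot\cap\xi)\,d\xi$ into exactly such an $\R^n$-integral with the weight $\|x\|^{m-n}$. We must also handle the dependence of $\Phi$ on $t$; this is routine given the continuity \eqref{Taylorqb-continuity} and the convergence in \eqref{Taylorqb-limit}.

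\textbf{Main obstacle: domination in $s$.} The hardest step is dominating the difference quotients uniformly in $s$, near both ends $s\to0$ and $s\to M$. This is exactly what Lemma~\ref{tildeV-theta-parallelset} is for, applied with $\theta=m$.
\begin{itemize}
\item For $m>1$, \eqref{tildeV-theta-large-parallelset-eq} bounds $\int_{F_s\Delta(F_s\oplus tL)}\|x\|^{m-n}$ by $\gamma|t|$, with $\gamma$ growing polynomially in the radius $R_s\lesssim\log(1/s)$ (the radius bound comes from the proof of Lemma~\ref{coarea-BV}). A dyadic truncation in $s$ makes this integrable over $(0,e^{-1})$.
\item For $m=1$, \eqref{tildeV-theta1-parallelset-eq} only gives $|t|\log(1/|t|)$, which is fatal if used everywhere. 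We use it only for $s\in(M-c|t|,M)$, where this range of length $O(|t|)$ yields $O(t^2\log(1/|t|))=o(t)$. On the rest we use \eqref{tildeV-theta1aleph-parallelset-eq} with $\aleph\sim\gamma(M-s)$ from Claim~\ref{exponential-lowerbound}, which needs $f(o)=\sup f$. The resulting bound $|t|\log\frac1{M-s}$ is integrable in $s$.
\item For $t<0$ the top layers $s\in(M_t,M)$ disappear. Since $M-M_t=O(|t|)$, their contribution is again $o(t)$ by the same estimate.
\end{itemize}
Given these dominations, the pointwise derivative \eqref{HLYZ-radial-der} for a.e. $u$ and every $s$, plus dominated convergence, yields \eqref{variation-phi+thL-eq} and \eqref{variation-phi+thL-eq2}. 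Finiteness of the right-hand sides follows from Proposition~\ref{tildeA-finiteness}, since $h_L\leq\sigma\|\cdot\|$.
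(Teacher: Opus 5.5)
Your proposal is correct and is essentially the paper's own proof. The matching steps are: the Taylor reduction via Corollary~\ref{Taylorqb}; passing through the dual Radon transform to $\|x\|^{m-n}$-weighted integrals over $F_s\oplus t\cdot L$ (so the sectionwise bound (a) is never actually needed); the pointwise derivative \eqref{HLYZ-radial-der}; Lemma~\ref{tildeV-theta-parallelset} for the top levels, with the $m=1$ split at $s=M-O(|t|)$ using $\aleph=\gamma_0(M-s)$ from Claim~\ref{exponential-lowerbound}; and the coarea/cone-volume reassembly. The only organizational difference is that you run a nested dominated convergence with a $t$-dependent cutoff, where the paper uses an $\varepsilon$--$\eta$ splitting.

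One point needs fixing, at the low levels. There, do as the paper does and use that $F_s$ contains a fixed ball $rB^n$ for all $s\le M-\eta$, which gives $|\varrho_{F_s\oplus t\cdot L}^m-\varrho_{F_s}^m|\le \frac{2m\sigma|t|}{r}\varrho_{F_s}^m$. Do not use Lemma~\ref{tildeV-theta-parallelset} with growing radii $R_s\sim\log\frac1s$ there: that lemma requires $|t|\le \frac{1}{2R}$, so it does not apply to levels with $R_s\gtrsim 1/|t|$, and your ``dyadic truncation'' does not address this.
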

\begin{proof} First, let $q\neq 0$.
During the argument, we abbreviate $d\nu_{m}(\xi)$ to $d\xi$ when integrating on ${\rm G}(n,m)$. Let
$$
L\subset \sigma B^n \mbox{ \  for }\sigma>e.
$$ 
The finiteness of the two integrals occurring in \eqref{variation-phi+thL-eq} and in \eqref{variation-phi+thL-eq2} follows from $h_L(x)\leq  \sigma\|x\|$ for $x\in \R^n$ and Proposition~\ref{tildeA-finiteness}.

Let $M=\sup f$ and  $F_s=\{f\geq s\}$ for $s\in(0,M]$. We frequently use that for any $s\in(0,M)$, we have (cf. \eqref{log-sum-level})
\begin{equation}
\label{FsL-fts}
\begin{array}{rcl}
F_s\subset \{f_t\geq s\}=F_s\oplus t\cdot L&\mbox{ if }&t>0,\\
F_s\supset \{f_t\geq s\}=F_s\oplus t\cdot L&\mbox{ if }&t<0.
\end{array}
\end{equation}
There exist an $R_0>\sigma$ and $r_0>0$ such that
\begin{equation}
\label{variation-phi-thL-r0-R0}
r_0 B^n\subset F_{M/2}\subset R_0B^n, \mbox { \ and hence } F_s\subset R_0B^n\mbox{ if }\frac{M}2\leq s<M.
\end{equation}
We deduce from \eqref{exponential-lowerbound-radius} in Claim~\ref{exponential-lowerbound} the existence of $\gamma_0\in(0,1)$ (depending on $f$ and $r_0$) such that 
\begin{equation}
\label{variation-phi-thL-gamma0-M-s}
 \gamma_0(M-s)\,B^n\subset F_s\mbox{ \ if }s\in\left[\frac{M}2,M\right).
\end{equation}
 
We choose a $t_0\in(0,\frac1{2R_0})$ depending on $f$ and  $L$ such that Corollary~\ref{Taylorqb} holds for $t\in(-t_0,t_0)$ when $c=1$ and $q$, $m$, $n$, $f$, $\sigma$ are the same. 
According to Corollary~\ref{Taylorqb}, there exist $A>0$ depending on $f$ and $\sigma$ such that if $t\in(-t_0,t_0)$ and $\xi\in{\rm G}(n,m)$, then
for certain $b_{t,\xi}>0$, we have
\begin{align}
\label{variation-phi-thL-ftxi-q-1}
\left\|f_t|_\xi\right\|_1^{q-1}\leq &A,\\ 
\label{variation-phi-thL-fxi-tautxi1}
\left\|f_t|_\xi\right\|_1^q-\left\|f|_\xi\right\|_1^q=&q\cdot b_{t,\xi}^{q-1}\cdot \left(\left\|f_t|_\xi\right\|_1-\left\|f|_\xi\right\|_1\right),\\
\label{variation-phi-thL-fxi-tautxi2}
b_{t,\xi}^{q-1}\leq &A,\\
\label{variation-phi-thL-fxi-tautxi-limit}
\lim_{t\to 0}b_{t,\xi}=&\left\|f|_\xi\right\|_1. 
\end{align}
If $\theta\in(0,M]$ and $0<|t|<t_0$, then we define
\begin{align*}
I(\theta,t)=&\frac{q}m\int_0^\theta\int_{{\rm G}(n,m)}\int_{S^{n-1}\cap\xi}b_{t,\xi}^{q-1}\times\\
&\times \frac{\left({\rm sign}\,\varrho_{\{f_t> s\}}(u)\right)\left|\varrho_{\{f_t> s\}}(u)\right|^m-\varrho_{F_s}(u)^m}{t}\,d\HH^{m-1}(u)\,d\xi\,ds, \\
J(\theta)=&q\int_0^\theta\int_{{\rm G}(n,m)}\int_{S^{n-1}\cap\xi}\left\|f|_\xi\right\|_1^{q-1} \varrho_{F_s}(u)^m
\frac{h_L(\alpha_{F_s}(u))}{h_{F_s}(\alpha_{F_s}(u))}\,d\HH^{m-1}(u)\,d\xi\,ds.
\end{align*}
We deduce from \eqref{FsL-fts} that if $u\in S^{n-1}$, $0<|t|<t_0$ and $s\in(0,M)$, then
\begin{equation}
\label{FsL-fts-upsilon-pos}
\Upsilon(u,t,s):=\frac{\left({\rm sign}\,\varrho_{\{f_t> s\}}(u)\right)\left|\varrho_{\{f_t> s\}}(u)\right|^m-\varrho_{F_s}(u)^m}{t}>0.
\end{equation}
For fixed $t$ with $0<|t|<t_0$, it follows that $I(\theta,t)$ is an increasing function of $\theta\in(0,M]$; moreover,
 \eqref{variation-phi-thL-fxi-tautxi1} and later \eqref{fxi-representations-Rmlevelset-eq} in Lemma~\ref{fxi-representations} yield that 
\begin{align}
\nonumber
\frac{\widetilde{\Psi}_{m,q}(f_t)-\widetilde{\Psi}_{m,q}(f)}t=&\int_{{\rm G}(n,m)}\frac{\|f_t|_{\xi}\|_1^q-\|f|_{\xi}\|_1^q}t\,d\xi\\
\nonumber
=&
q \int_{{\rm G}(n,m)}b_{t,\xi}^{q-1}\cdot \frac{\left\|f_t|_\xi\right\|_1-\left\|f|_\xi\right\|_1}t\,d\xi\\
\nonumber
=&\frac{q}m\int_{{\rm G}(n,m)}b_{t,\xi}^{q-1} \int_0^M\int_{S^{n-1}\cap\xi} \Upsilon(u,t,s)\,d\HH^{m-1}(u)\,ds\,d\xi\\
\label{variation-phi-thL-tildePsimq-Ithetat}
 =&I(M,t).
\end{align}
For the finiteness of $J(\theta)$, and in turn to show $\lim_{\theta\to M^{-}}(J(M)-J(\theta))=0$, we claim that
\begin{equation}
\label{JM-finite}
J(M)<\infty.
\end{equation}
We deduce from \eqref{variation-phi-thL-ftxi-q-1} and $L\subset\sigma B^n$, then from the duality formula \eqref{Rm-dualRm-sphere} and setting $\gamma^*=\frac{|q|A\sigma m\omega_m}{n\omega_n}$, after that from the formula \eqref{HLYZ-cone-volume0}, then from the coarea formula \eqref{coarea-BV-eq} where $\nabla f=-f\nabla\varphi$ that
\begin{align*} 
\nonumber
J(M)\leq &|q|A\sigma\int_0^M\int_{{\rm G}(n,m)}\widetilde{\mathcal{R}}_m 
\frac{\varrho_{F_s}(u)^m}{h_{F_s}(\alpha_{F_s}(u))}\,d\xi\,ds\\
\nonumber
=& \gamma^*\int_0^M\int_{S^{n-1}}
\frac{\varrho_{F_s}(u)^m}{h_{F_s}(\alpha_{F_s}(u))}\,du\,ds
= \gamma^*\int_0^M\int_{\partial F_s} \|x\|^{m-n}
\,dx\,ds \\
= &\gamma^*\int_{D_f} \|x\|^{m-n} \|\nabla f(x)\|
\,dx+\gamma^*\int_{\partial D_f}\|x\|^{m-n} f(x)\,dx.
\end{align*}
Now, there exists some $r_*>0$ such that $r_*B^n\subset {\rm int}\,D_f$, and hence one finds $M_*>0$ such that $\|\nabla f(x)\|=f(x)\|\nabla\varphi(x)\|\leq M_*$ for any $x\in r_*B^n$ where $\nabla\varphi(x)$ exists. It follows that
\begin{align*}
\int_{D_f} \|x\|^{m-n} \|\nabla f(x)\|\,dx\leq &M_*\int_{r_*B^n} \|x\|^{m-n} \,dx+
r_*^{m-n}\int_{\R^n}\|\nabla f(x)\|\,dx<\infty,\\
\int_{\partial D_f}\|x\|^{m-n} f(x)\,d\HH^{n-1}(x)\leq &r_*^{m-n}\int_{\partial D_f}f(x)\,d\HH^{n-1}(x)<\infty
\end{align*}
by $1\leq m\leq n-1$ and \eqref{Dario-Boaz-Liran-eq} after Lemma~\ref{coarea-BV}, proving the claim \eqref{JM-finite}.

We divide the rest of the proof of Proposition~\ref{variation-phi+thL} into two main steps.\\

\noindent{\bf Step 1.} $\lim_{t\to 0}\frac{\widetilde{\Psi}_{m,q}(f_t)-\widetilde{\Psi}_{m,q}(f)}t=J(M)$.

The core of the argument in Step~1  is the claim that for any small $\varepsilon\in(0,1)$, there exist 
$\eta_\varepsilon\in(0,\frac{M}2]$ such that also $\eta_\varepsilon\leq \varepsilon$, and for $t_\varepsilon=\min\{t_0,\frac{\gamma_0\eta_\varepsilon}{4m\sigma}\}$ (cf. \eqref{variation-phi-thL-gamma0-M-s}), we have  
\begin{align}
\label{variation-phi-thL-Ithetat-eta}
|I(M,t)-I(M-\eta_\varepsilon,t)|\leq &\varepsilon&&\mbox{if }0<|t|<t_\varepsilon,\\
\label{variation-phi-thL-Ithetat-limit}
\lim_{t\to 0}I(M-\eta_\varepsilon,t)=&J(M-\eta_\varepsilon).&&
\end{align}

To prove \eqref{variation-phi-thL-Ithetat-eta}, let $\eta\in(0,\frac{M}2]$ such that $\eta\leq \frac1{\gamma_0R_0}$, and let $0<|t|<\tilde{t}_\eta$ where $\tilde{t}_\eta=\min\{t_0,\frac{\gamma_0\eta}{4m\sigma}\}$. 
Using the notation as in \eqref{FsL-fts-upsilon-pos} and $\tilde{\gamma}=\frac{m\omega_m |q|A}{n\omega_n }$, it follows from applying \eqref{variation-phi-thL-fxi-tautxi2}, then \eqref{rho-tom-xi-inner-int}, and after  that \eqref{FsL-fts} that 
\begin{align}
\nonumber
|I(M,t)-I(M-\eta,t)|\leq &
\frac{|q|A}m\int_{M-\eta}^M\int_{{\rm G}(n,m)}\int_{S^{n-1}\cap\xi}\Upsilon(u,t,s)\,d\HH^{m-1}(u)\,d\xi\,ds\\
\nonumber
=&\tilde{\gamma}\int_{M-\eta}^M\frac{\int_{\{f_t\geq s\}}\|x\|^{m-n}\,dx-\int_{F_s}\|x\|^{m-n}\,dx}t\,ds\\
\label{variation-phi-thL-Ithetat-basicest}
=&\tilde{\gamma}\int_{M-\eta}^M\frac1{|t|}\int_{F_s\Delta(F_s\oplus t\cdot L)}\|x\|^{m-n}\,dx\,ds.
\end{align}
Let $\gamma>0$ be the constant of  Lemma~\ref{tildeV-theta-parallelset} for $\sigma>e$ and $R=R_0>\sigma$,
and we observe that if $0<|t|<\tilde{t}_\eta$ and $s\in[M-\eta,M)$, then
\begin{equation}
\label{variation-phi-thL-Ithetat-basicest-m1-pcond0}
F_s\subset R_0B^n \mbox{ \ and \ } |t|\leq \frac{\gamma_0\eta}{4m\sigma}<\frac1{2R_0}<\frac1e\mbox{ \ and \ }R_0>\sigma>e. 
\end{equation}
If $m\geq 2$, then \eqref{tildeV-theta-large-parallelset-eq} in Lemma~\ref{tildeV-theta-parallelset}, \eqref{variation-phi-thL-Ithetat-basicest-m1-pcond0} and \eqref{variation-phi-thL-Ithetat-basicest} yield that
$$
|I(M,t)-I(M-\eta,t)|\leq \tilde{\gamma}\gamma \eta;
$$
therefore, we choose a $\eta_\varepsilon>0$ for \eqref{variation-phi-thL-Ithetat-eta} such that $\eta_\varepsilon\leq \min\{ \frac{M}2,\frac1{\gamma_0R_0},\varepsilon\}$ and $\tilde{\gamma} \gamma \eta_\varepsilon\leq \varepsilon$.

If $m=1$, then we rewrite  \eqref{variation-phi-thL-Ithetat-basicest} in the form
\begin{align}
\label{variation-phi-thL-Ithetat-basicest-m1-t}
|I(M,t)-I(M-\eta,t)|\leq& \tilde{\gamma}\int_{0}^{\frac{4}{\gamma_0}\,|t|}\frac1{|t|}\int_{F_{M-p}\Delta(F_{M-p}\oplus t\cdot L)}\|x\|^{1-n}\,dx\,dp+\\
\label{variation-phi-thL-Ithetat-basicest-m1-eta}
&+ \tilde{\gamma}\int_{\frac{4}{\gamma_0}\,|t|}^\eta\frac1{|t|}\int_{F_{M-p}\Delta(F_{M-p}\oplus t\cdot L)}\|x\|^{1-n}\,dx\,dp,
\end{align}
where $\frac{4}{\gamma_0}\,|t|<\eta$ by $|t|\leq \frac{\gamma_0\eta}{4m\sigma}$,  and if $\frac{4}{\gamma_0}\,|t|\leq p\leq \eta$, then (cf. \eqref{variation-phi-thL-r0-R0} and \eqref{variation-phi-thL-gamma0-M-s})
\begin{equation}
\label{variation-phi-thL-Ithetat-basicest-m1-pcond}
\gamma_0p\,B^n\subset  F_{M-p}\subset R_0B^n\mbox{ \ and \ } 2|t|<\gamma_0p\leq \frac1{R_0}<\frac1e.
\end{equation}
Applying \eqref{tildeV-theta1-parallelset-eq} in Lemma~\ref{tildeV-theta-parallelset} to the right hand side of \eqref{variation-phi-thL-Ithetat-basicest-m1-t} (cf. \eqref{variation-phi-thL-Ithetat-basicest-m1-pcond0}), and
\eqref{tildeV-theta1aleph-parallelset-eq} in Lemma~\ref{tildeV-theta-parallelset} to the right hand side of \eqref{variation-phi-thL-Ithetat-basicest-m1-eta} (cf. \eqref{variation-phi-thL-Ithetat-basicest-m1-pcond}), we deduce using Claim~\ref{tlogt-monotonicity}, $(p+p\log\frac1{\gamma_0 p})'=\log\frac1{\gamma_0 p}$ and  $|t|<\gamma_0\eta<\frac1e$ that
\begin{align*}
|I(M,t)-I(M-\eta,t)|\leq& \tilde{\gamma} \cdot \frac{4|t|}{\gamma_0}\cdot \gamma \log\frac1{|t|}+
\tilde{\gamma}\int_{0}^\eta \gamma\log\frac1{\gamma_0 p}\,dp\\
= &\tilde{\gamma}\gamma\left(\frac{4|t|}{\gamma_0}\cdot \log\frac1{|t|}+\eta+
\eta\cdot \log \frac1{\gamma_0 \eta}\right)\\
\leq &\tilde{\gamma}\gamma\left(\eta+
5\eta\log \frac1{\gamma_0 \eta}\right).
\end{align*}
According to Claim~\ref{tlogt-monotonicity}, we may choose
$\eta_\varepsilon>0$ and $t_\varepsilon=\tilde{t}_{\eta_\varepsilon}$ for \eqref{variation-phi-thL-Ithetat-eta}  such that $\eta_\varepsilon\leq \min\{ \frac{M}2,\frac1{\gamma_0R_0},\varepsilon\}$ and $\tilde{\gamma}\gamma\left(\eta_\varepsilon+
5\eta_\varepsilon\log \frac1{\gamma_0 \eta_\varepsilon}\right)\leq \varepsilon$, completing the proof of \eqref{variation-phi-thL-Ithetat-eta}.

To prove \eqref{variation-phi-thL-Ithetat-limit}, we plan to use Lebesgue's Dominant Convergence Theorem where \eqref{variation-phi-thL-gamma0-M-s} yields that for $r_{\eta_\varepsilon}=\gamma_0\eta_\varepsilon$, we have
\begin{equation}
\label{variation-phi-thL-retaeps}
 r_{\eta_\varepsilon}\,B^n\subset F_s\mbox{ \ if }s\in\left(0,M-\eta_\varepsilon\right].
\end{equation}
Here $t_\varepsilon=\tilde{t}_{\eta_\varepsilon}\leq \frac{\gamma_0\eta_\varepsilon}{4m\sigma}$ yields that if $0<|t|<t_\varepsilon$, then
\begin{equation*}
\label{variation-phi-thL-t-less-teps}
|t|\sigma\leq  \frac{r_{\eta_\varepsilon}}{4m}  \mbox{ \ and \ }\frac{r_{\eta_\varepsilon}}2\,B^n\subset \{f_t\geq s\}=F_s\oplus t\cdot L\mbox{ \ if }s\in\left(0,M-\eta_\varepsilon\right],
\end{equation*}
and hence $\varrho_{\{f_t\geq s\}}(u)>0$ for $u\in S^{n-1}$ and  $s\in(0,M-\eta_\varepsilon]$. 
If $0<|t|<t_\varepsilon$, $u\in S^{n-1}$ and  $s\in(0,M-\eta_\varepsilon]$, then we deduce from \eqref{variation-phi-thL-retaeps}, $\{f_t\geq s\}=F_s\oplus t\cdot L$ and $L\subset \sigma B^n$ that
$$
\left(1-\frac{\sigma|t|}{r_{\eta_\varepsilon}}\right)^m\varrho_{F_s}(u)^m\leq \varrho_{\{f_t\geq s\}}(u)^m\leq \left(1+\frac{\sigma|t|}{r_{\eta_\varepsilon}}\right)^m\varrho_{F_s}(u)^m,
$$
thus \eqref{1+tau-m} and \eqref{variation-phi-thL-fxi-tautxi2} imply that  (cf. \eqref{FsL-fts-upsilon-pos})
\begin{equation}
\label{variation-phi-thL-IMt-pointwise-upp}
0<b_{t,\xi}^{q-1}\cdot \Upsilon(u,t,s)\leq \frac{2m\sigma A}{r_{\eta_\varepsilon}}\cdot \varrho_{F_s}(u)^m.
\end{equation}
On the other hand, first \eqref{rho-tom-xi-inner-int}, then \eqref{variation-phi-thL-retaeps}, after  that $m\geq 1$, and finally the layer cake formula yield that
\begin{align*}
I^*_{\eta_\varepsilon}= &
\int_0^{M-\eta_\varepsilon}\int_{{\rm G}(n,m)}\int_{S^{n-1}\cap\xi}\varrho_{F_s}(u)^m\,d\HH^{m-1}(u)\,d\xi\,ds\\
=& \frac{m^2\omega_m }{n\omega_n}\left[\int_0^{M-\eta_\varepsilon}\int_{r_{\eta_\varepsilon}B^n}\|x\|^{m-n}\,dx\,ds+\int_0^{M-\eta_\varepsilon}\int_{F_s\backslash r_{\eta_\varepsilon}B^n}\|x\|^{m-n}\,dx\,ds\right]\\
\leq & m\omega_mM\cdot r_{\eta_\varepsilon}^m+\frac{m^2\omega_m }{n\omega_n}\cdot r_{\eta_\varepsilon}^{m-n}\int_0^M|F_s|\,ds<\infty.
\end{align*}
Combining this estimate with \eqref{variation-phi-thL-IMt-pointwise-upp} shows that we can apply Lebesgue's Dominant Convergence Theorem to \eqref{variation-phi-thL-Ithetat-limit}. To get the pointwise limit, for each $s\in(0,M-\eta_\varepsilon]$ and for $\HH^{n-1}$ a.e. $u\in S^{n-1}$, \eqref{HLYZ-radial-der} (cf. \eqref{log-sum-level}) and
\eqref{variation-phi-thL-fxi-tautxi-limit}
 imply that
$$
\lim_{t\to 0}b_{t,\xi}^{q-1}\cdot \Upsilon(u,t,s)=m\left\|f|_\xi\right\|_1^{q-1}\cdot
\frac{h_L(\alpha_{F_s}(u))\cdot \varrho_{F_s}(u)^m}{h_{F_s}(\alpha_{F_s}(u))},
$$
and hence Lebesgue's Dominant Convergence Theorem yields that
$\lim_{t\to 0}I(M-\eta_\varepsilon,t)=J(M-\eta_\varepsilon)$ as required by \eqref{variation-phi-thL-Ithetat-limit}.

Having proved \eqref{variation-phi-thL-Ithetat-eta} and \eqref{variation-phi-thL-Ithetat-limit}, we are ready to complete Step 1. 
First of all, for any $\varepsilon \in (0,1)$, \eqref{variation-phi-thL-Ithetat-eta} and \eqref{variation-phi-thL-Ithetat-limit} imply the existence of a $t^*_\varepsilon\in(0,t_\varepsilon)$ such that
if $0<|t|\leq t^*_\varepsilon$, then $|I(M,t)-I(M-\eta_\varepsilon,t)|\leq \varepsilon$ and  
$\left|I(M-\eta_\varepsilon,t)-J(M-\eta_\varepsilon)\right|\leq \varepsilon$; therefore, as $\eta_\varepsilon\leq \varepsilon$ by definition,
\begin{align*}
\left|I(M,t)-J(M)\right|\leq&|I(M,t)-I(M-\eta_\varepsilon,t)|+\left|I(M-\eta_\varepsilon,t)-J(M-\eta_\varepsilon)\right|+\\
&+|J(M)-J(M-\eta_\varepsilon)|
\leq 2\varepsilon+|J(M)-J(M-\varepsilon)|.
\end{align*}
Here $J(M)<\infty$ (cf. \eqref{JM-finite}) yields $\lim_{\varepsilon\to 0^+}\left[2\varepsilon+|J(M)-J(M-\varepsilon)|\right]=0$, thus proving 
$\lim_{t\to 0}I(M,t)=J(M)$, as it is required by Step 1 (cf. \eqref{variation-phi-thL-tildePsimq-Ithetat}).\\

\noindent{\bf Step 2.} $J(M)=q\int_{\Rn}h_L\,d\widetilde{A}_{m,q}^{e}(f,\cdot)+q\int_{S^{n-1}}h_L\,d\widetilde{A}_{m,q}^{s}(f,\cdot)$.

In order to apply \eqref{HLYZ-cone-volume0}, we can extend the definition of the dual Radon transform in a way such that if $\lambda\neq 0$ and $u\in S^{n-1}$, then
$$
\widetilde{\mathcal{R}}_m^* \left(({\rm R}_mf)^{q-1}\right)(\lambda u)=\widetilde{\mathcal{R}}_m^* \left(({\rm R}_mf)^{q-1}\right)(u).
$$

We deduce first from applying the duality formula \eqref{Rm-dualRm-sphere} for any given $s\in(0,M)$, then from \eqref{HLYZ-cone-volume0} that
\begin{align}
\nonumber
J(M)=&q\int_0^M\int_{{\rm G}(n,m)}{\rm R}_mf(\xi)^{q-1}\widetilde{\mathcal{R}}_m \left(\varrho_{F_s}^m
\cdot \frac{h_L\circ \alpha_{F_s}}{h_{F_s}\circ \alpha_{F_s}}\right)\,d\xi\,ds\\
\nonumber
=&q\int_0^M\int_{S^{n-1}}\widetilde{\mathcal{R}}_m^* \left(({\rm R}_mf)^{q-1}\right)\cdot\varrho_{F_s}^m
\cdot \frac{h_L\circ \alpha_{F_s}}{h_{F_s}\circ \alpha_{F_s}}\,d\HH^{n-1}\,ds\\
\label{JM-on-sphere}
=&q\int_0^M\int_{\partial F_s} h_L(u_{F_s}(x))\cdot \|x\|^{m-n}
\widetilde{\mathcal{R}}_m^* \left(({\rm R}_mf)^{q-1}\right)\left(\frac{x}{\|x\|}\right)
\,dx\,ds.
\end{align}
Here if $s\in (0,M)$ and $x\in\partial F_s$, then
$$
h_L(u_{F_s}(x))=\left\{
\begin{array}{rcl}
\frac{h_L(\nabla\varphi(x))}{\|\nabla\varphi(x)\|}&\mbox{ if }&x\in{\rm int}\,D_f\mbox{ and }\varphi\mbox{ is differentiable at }x,\\[1ex]
h_L(u_{D_f}(x))&\mbox{ if }&x\in(\partial'D_f)\cap(\partial'F_s).
\end{array}\right.
$$
It follows from applying the coarea formula \eqref{coarea-BV-eq} and Claim~\ref{exterior-normal} to \eqref{JM-on-sphere} that
\begin{align*}
J(M)=&q\int_{\R^n} h_L(\nabla\varphi(x)) \|x\|^{m-n}f(x) \left(\widetilde{\mathcal{R}}^*_m({\rm R}_mf)^{q-1}\right)\left(\frac{x}{\|x\|}\right)\,dx\\
&+q\int_{\partial D_f} h_L(u_{D_f}(x))\|x\|^{m-n}f(x) \left(\widetilde{\mathcal{R}}^*_m({\rm R}_mf)^{q-1}\right)\left(\frac{x}{\|x\|}\right)\,dx,
\end{align*}
completing the proof of Step~2, and in turn Proposition~\ref{variation-phi+thL} if $q\neq 0$ by \eqref{tildeAdef-Rn} and \eqref{tildeAdef-s}.

If $q=0$ in Proposition~\ref{variation-phi+thL}, then the only change in the argument is that \eqref{variation-phi-thL-fxi-tautxi1} is replaced by
\begin{equation*}
\label{variation-phi-thL-fxi-tautxi0}
\log \left\|f_t|_\xi\right\|_1-\log \left\|f|_\xi\right\|_1=b_{t,\xi}^{-1}\cdot \left(\left\|f_t|_\xi\right\|_1-\left\|f|_\xi\right\|_1\right),
\end{equation*}
completing the proof of Proposition~\ref{variation-phi+thL}.
\end{proof}

\begin{coro}
\label{variation-phi+tchL}
Let  $q\in\R$, $c>0$ and $m\in\{1,\ldots,n-1\}$. Let $f=e^{-\varphi}$ be an upper semicontinuous log-concave function  on $\Rn$ such that $0<\int_{\Rn} f<\infty$, $o\in{\rm int}\,D_f$ and $f(o)=\sup f>0$. Let $L\subset \R^n$ be a compact convex set with $o\in L$. Then $\psi^*=h_L+\log c$ holds for the log-concave function $c\cdot\mathbf{1}_L=e^{-\psi}$, and  $f_t=e^{-\varphi_t}$ for $\varphi_t=(\varphi^*+t\psi^*)^*$  and $t\in\R$ satisfies that
\begin{align}
\label{variation-phi+tchL-eq}
\lim_{t\to 0}\frac{\widetilde{\Psi}_{m,q}(f_t)-\widetilde{\Psi}_{m,q}(f)}t
=&q\int_{\Rn}\psi^*\,d\widetilde{A}_{m,q}^{e}(f,\cdot)+q\int_{S^{n-1}}h_L\,d\widetilde{A}_{m,q}^{s}(f,\cdot),\;q\neq 0; \\
\label{variation-phi+tchL-eq0}
\lim_{t\to 0}\frac{\widetilde{\Psi}_{m,0}(f_t)-\widetilde{\Psi}_{m,0}(f)}t
=&\int_{\Rn}\psi^*\,d\widetilde{A}_{m,0}^{e}(f,\cdot)+\int_{S^{n-1}}h_L\,d\widetilde{A}_{m,0}^{s}(f,\cdot),
\end{align}
where both integrals occurring in \eqref{variation-phi+tchL-eq} and in \eqref{variation-phi+tchL-eq0} are finite. 
\end{coro}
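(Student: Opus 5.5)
The plan is to reduce Corollary~\ref{variation-phi+tchL} to Proposition~\ref{variation-phi+thL} through the scaling identity \eqref{perturb-with-c1L} in Example~\ref{example-c1L}. That identity gives $f_t=c^t\tilde f_t$ with $\tilde f_t=e^{-(\varphi^*+th_L)^*}$. The formula $\psi^*=h_L+\log c$ is already computed in Example~\ref{example-c1L}.

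\emph{Case $q\neq 0$.} By the homogeneity \eqref{Psi-Amq-homogeneity}, $\widetilde{\Psi}_{m,q}(f_t)=c^{qt}\widetilde{\Psi}_{m,q}(\tilde f_t)$. Here $o\in{\rm int}\,D_{\tilde f_t}$ for small $|t|$ by Claim~\ref{exponential-upperbound}, so the functional is defined. We split
\[
\frac{\widetilde{\Psi}_{m,q}(f_t)-\widetilde{\Psi}_{m,q}(f)}t=c^{qt}\,\frac{\widetilde{\Psi}_{m,q}(\tilde f_t)-\widetilde{\Psi}_{m,q}(f)}t+\frac{c^{qt}-1}t\,\widetilde{\Psi}_{m,q}(f).
\]
As $t\to0$, the first term tends to $q\int h_L\,d\widetilde{A}^e_{m,q}(f,\cdot)+q\int h_L\,d\widetilde{A}^s_{m,q}(f,\cdot)$ by Proposition~\ref{variation-phi+thL}, since $c^{qt}\to1$. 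The second term tends to $q\log c\cdot\widetilde{\Psi}_{m,q}(f)$. By \eqref{tildeA-finiteness-RntildePsi} this equals $q\int_{\R^n}\log c\,d\widetilde{A}^e_{m,q}(f,\cdot)$. Adding the two limits gives $q\int(h_L+\log c)\,d\widetilde{A}^e_{m,q}=q\int\psi^*\,d\widetilde{A}^e_{m,q}$ plus the spherical term, which is \eqref{variation-phi+tchL-eq}.

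\emph{Case $q=0$.} By Definition~\ref{Psimqfdef-sectionint}, $\log\|f_t|_\xi\|_1=t\log c+\log\|\tilde f_t|_\xi\|_1$. Since $\nu_m$ is a probability measure, $\widetilde{\Psi}_{m,0}(f_t)=t\log c+\widetilde{\Psi}_{m,0}(\tilde f_t)$. The difference quotient is therefore $\log c$ plus the quotient handled by \eqref{variation-phi+thL-eq2}. By \eqref{tildeA-finiteness-RntildePsi0}, $\log c=\int\log c\,d\widetilde{A}^e_{m,0}(f,\cdot)$, and combining gives \eqref{variation-phi+tchL-eq0}.

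\emph{Finiteness.} The integrals are finite because $|\psi^*(x)|\le\sigma\|x\|+|\log c|$ when $L\subset\sigma B^n$. Proposition~\ref{tildeA-finiteness} then supplies a finite first moment and finite total mass for $\widetilde{A}^e_{m,q}(f,\cdot)$, and finite total mass for $\widetilde{A}^s_{m,q}(f,\cdot)$.

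The only delicate point is the validity of the identity $f_t=c^t\tilde f_t$ for negative $t$. It follows from \eqref{Legendre-shift}, since $(\varphi^*+th_L+t\log c)^*=(\varphi^*+th_L)^*-t\log c$ for every real $t$. The rest is routine bookkeeping.
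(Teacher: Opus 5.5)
Your proof is correct and follows the paper's argument. You reduce to Proposition~\ref{variation-phi+thL} via $f_t=c^t\tilde f_t$ and use $\widetilde{\Psi}_{m,q}(f_t)=c^{qt}\widetilde{\Psi}_{m,q}(\tilde f_t)$, respectively $t\log c+\widetilde{\Psi}_{m,0}(\tilde f_t)$ when $q=0$. You then absorb the $\log c$ term through $\widetilde{A}^e_{m,q}(f,\R^n)=\widetilde{\Psi}_{m,q}(f)$, respectively $=1$.

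The differences are cosmetic. You pair $c^{qt}$ with the difference quotient and $\widetilde{\Psi}_{m,q}(f)$ with $(c^{qt}-1)/t$, whereas the paper pairs $(c^{qt}-1)/t$ with $\widetilde{\Psi}_{m,q}(\tilde f_t)$ and so implicitly uses continuity of $t\mapsto\widetilde{\Psi}_{m,q}(\tilde f_t)$ at $0$. You also spell out the finiteness check, which the paper leaves implicit.
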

\noindent{\bf Remark.} 
Here $h_L$ is the recession function $\bar{\psi}^*$ of $\psi^*$ (cf. \eqref{barpsi-dompsi-bounded}).
\begin{proof}
We have $\psi(x)=-\log c$ if $x\in L$ and $\psi(x)=\infty$ if $x\not\in L$, and hence $\psi^*=h_L+\log c$ and $f_t=c^t\tilde{f}_t$ for $\tilde{f}_t=e^{-(\varphi^*+th_L)^*}$ (cf. Example~\ref{example-c1L}).

First, let $q \neq 0$, thus $\widetilde{\Psi}_{m,q}(f_t)=c^{qt}\widetilde{\Psi}_{m,q}(\tilde{f}_t)$ for $t\in\R$. We deduce from Proposition~\ref{variation-phi+thL} and $\widetilde{A}_{m,q}^{e}(f,\R^n)= \widetilde{\Psi}_{m,q}(f)$ (cf.  \eqref{tildeA-finiteness-RntildePsi} in Proposition~\ref{tildeA-finiteness})  that
\begin{align*}
\lim_{t\to 0}\frac{\widetilde{\Psi}_{m,q}(f_t)-\widetilde{\Psi}_{m,q}(f)}t
=& \lim_{t\to 0}\frac{c^{qt}-1}t \cdot \widetilde{\Psi}_{m,q}(\tilde{f}_t)
+\lim_{t\to 0}\frac{\widetilde{\Psi}_{m,q}(\tilde{f}_t)-\widetilde{\Psi}_{m,q}(f)}t\\
=&q\int_{\Rn}(h_L+\log c)\,d\widetilde{A}_{m,q}^{e}(f,\cdot)+q\int_{S^{n-1}}h_L\,d\widetilde{A}_{m,q}^{s}(f,\cdot)\\
=&q\int_{\Rn}\psi^*\,d\widetilde{A}_{m,q}^{e}(f,\cdot)+q\int_{S^{n-1}}h_L\,d\widetilde{A}_{m,q}^{s}(f,\cdot).
\end{align*}

If $q=0$, then $\widetilde{\Psi}_{m,0}(f_t)=t\log c+\widetilde{\Psi}_{m,0}(\tilde{f}_t)$ for $t\in\R$. We deduce from Proposition~\ref{variation-phi+thL} and $\widetilde{A}_{m,0}^{e}(f,\R^n)= 1$ (cf.  \eqref{tildeA-finiteness-RntildePsi0} in Proposition~\ref{tildeA-finiteness})  that
\begin{align*}
\lim_{t\to 0}\frac{\widetilde{\Psi}_{m,0}(f_t)-\widetilde{\Psi}_{m,0}(f)}t
=& \log c
+\lim_{t\to 0}\frac{\widetilde{\Psi}_{m,0}(\tilde{f}_t)-\widetilde{\Psi}_{m,0}(f)}t\\
=&\int_{\Rn}(h_L+\log c)\,d\widetilde{A}_{m,0}^{e}(f,\cdot)+\int_{S^{n-1}}h_L\,d\widetilde{A}_{m,0}^{s}(f,\cdot),
\end{align*}
completing the proof of Corollary~\ref{variation-phi+tchL}.
\end{proof}
% \begin{proof}[Proof of Proposition~\ref{Amq-pair-measures}]
% \end{proof}
In Proposition~\ref{variation-phi+tzeta}, we use the following consequence of Proposition~2.1 in Rotem \cite{R22}.

\begin{lemma}[Rotem]
\label{limt-variation-diffphi}
Let $\varphi,\zeta:\R^n \rightarrow (-\infty, \infty]$ be lower semicontinuous functions bounded from below such that $\varphi$ is convex and $\varphi^*(o),\zeta(o)<\infty$. If $\varphi$ is differentiable at an $x\in\R^n$, then
\begin{equation}
\label{limt-variation-diffphi-eq}
\lim_{t\to 0^+}\frac{(\varphi^*+t\zeta)^*(x)-\varphi(x)}t=-\zeta(\nabla \varphi(x)).
\end{equation}
If, in addition, if $\zeta$ is continuous and bounded, then
\begin{equation}
\label{limt-variation-diffphi0-eq}
\lim_{t\to 0}\frac{(\varphi^*+t\zeta)^*(x)-\varphi(x)}t=-\zeta(\nabla \varphi(x)).
\end{equation}
\end{lemma}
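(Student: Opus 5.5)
The plan is to squeeze the difference quotient between a lower bound, obtained by testing the supremum defining $(\varphi^*+t\zeta)^*(x)$ at the point $y_0=\nabla\varphi(x)$, and an upper bound, obtained from almost-maximizers $y_t$ that converge to $y_0$. Write $y_0=\nabla\varphi(x)$ and $\Phi_t=(\varphi^*+t\zeta)^*$. Since $\varphi$ is differentiable at $x$, we have $x\in{\rm int}\,{\rm dom}\,\varphi$ and $\partial\varphi(x)=\{y_0\}$. By the Fenchel equality $\varphi(x)+\varphi^*(y)=x\cdot y$ exactly when $y\in\partial\varphi(x)$, so the gap function
$$
G(y)=\varphi(x)+\varphi^*(y)-x\cdot y
$$
is convex, lower semicontinuous, nonnegative, and vanishes only at $y_0$. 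Note that $\varphi^{**}=\varphi$ by \eqref{Legendre-stars}, since $\varphi$ is lower semicontinuous and convex.

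\emph{Lower bound.} Using $y=y_0$ as a competitor in the supremum gives
$$
\Phi_t(x)\geq x\cdot y_0-\varphi^*(y_0)-t\zeta(y_0)=\varphi(x)-t\zeta(y_0).
$$
Hence for $t>0$ we get $\liminf_{t\to0^+}\frac{\Phi_t(x)-\varphi(x)}{t}\geq-\zeta(y_0)$. If $\zeta(y_0)=\infty$ this inequality is vacuous, and the upper bound below then forces the limit to be $-\infty$. So we may assume $\zeta(y_0)<\infty$.

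\emph{Upper bound via almost-maximizers.} Let $b$ satisfy $\zeta\geq-b$. For $t>0$, the quantity $\Phi_t(x)$ is finite: it is bounded above by $\varphi(x)+tb$ and below by the competitor $y=o$, using $\varphi^*(o),\zeta(o)<\infty$. Choose $y_t$ with
$$
x\cdot y_t-\varphi^*(y_t)-t\zeta(y_t)\geq \Phi_t(x)-t^2.
$$
Combining this with the lower bound and with $x\cdot y_t-\varphi^*(y_t)\leq\varphi(x)$ yields two facts.
\begin{itemize}
\item $G(y_t)\leq t(\zeta(y_0)+b)+t^2$.
\item $\Phi_t(x)\leq\varphi(x)-t\zeta(y_t)+t^2$.
\end{itemize}
Since $o\in{\rm int}\,{\rm dom}(\varphi(\cdot+x))$, Claim~\ref{ConvexFunctionsBasic} (applied to the translate, whose conjugate is $\varphi^*(y)-x\cdot y$) shows that $G$ is coercive. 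Thus the sublevel sets $\{G\leq\varepsilon\}$ are compact, nested, and have intersection $\{y_0\}$, so their diameters tend to $0$. Therefore $y_t\to y_0$. Lower semicontinuity of $\zeta$ then gives $\liminf\zeta(y_t)\geq\zeta(y_0)$, hence $\limsup_{t\to0^+}\frac{\Phi_t(x)-\varphi(x)}{t}\leq-\zeta(y_0)$, which proves \eqref{limt-variation-diffphi-eq}.

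\emph{Two-sided limit.} Now let $\zeta$ be continuous with $|\zeta|\leq N$. For $t<0$ the function $\varphi^*+t\zeta$ is still bounded below on ${\rm dom}\,\varphi^*$, up to the constant $|t|N$. The same two steps apply with all inequality directions tracked for negative $t$. The competitor $y_0$ gives $\frac{\Phi_t(x)-\varphi(x)}{t}\leq-\zeta(y_0)$. The almost-maximizers satisfy $G(y_t)\leq2|t|N+t^2$, so again $y_t\to y_0$. Continuity of $\zeta$, used in place of lower semicontinuity, gives $\zeta(y_t)\to\zeta(y_0)$, and hence the matching bound. This proves \eqref{limt-variation-diffphi0-eq}.

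\emph{Main obstacle.} The key step is the convergence $y_t\to y_0$. It requires coercivity of $G$, which comes from $x\in{\rm int}\,{\rm dom}\,\varphi$, together with uniqueness of the subgradient. Both are consequences of the differentiability of $\varphi$ at $x$.
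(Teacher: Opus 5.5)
Your argument is correct whenever $\zeta(\nabla\varphi(x))<\infty$, and your treatment of the two-sided limit for continuous bounded $\zeta$ is fine. The one gap is the case $\zeta(y_0)=+\infty$, which the statement allows since $\zeta$ takes values in $(-\infty,\infty]$.

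You dismiss this case by saying that the upper bound ``forces the limit to be $-\infty$''. But your upper bound rests on $y_t\to y_0$, and you obtained that convergence from $G(y_t)\leq t(\zeta(y_0)+b)+t^2$. That bound is vacuous when $\zeta(y_0)=\infty$, so as written the reduction to $\zeta(y_0)<\infty$ is circular.

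It is easily repaired by contradiction. Suppose $\frac{\Phi_{t_k}(x)-\varphi(x)}{t_k}\geq -C$ along some sequence $t_k\to 0^+$. Then your almost-maximizers satisfy $G(y_{t_k})+t_k\zeta(y_{t_k})\leq Ct_k+t_k^2$. Hence $\zeta(y_{t_k})\leq C+t_k$ and $G(y_{t_k})\leq (C+b)t_k+t_k^2\to 0$. Coercivity of $G$ then gives $y_{t_k}\to y_0$, and lower semicontinuity yields $\zeta(y_0)\leq C$, a contradiction. Finiteness of $\Phi_t(x)$, which you need in order to choose $y_t$, still comes from the competitor $o$, as you noted.

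With that fix, your route differs from the paper's mainly in being self-contained. The paper does not prove the lemma. It quotes Proposition~2.1 of Rotem \cite{R22} for the one-sided limit. It then gets the two-sided limit by applying that same statement to $-\zeta$, which is lower semicontinuous, bounded from below and finite at $o$ when $\zeta$ is continuous and bounded.

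Your direct proof uses only elementary convex analysis and makes the mechanism explicit: the Fenchel gap $G$, its coercivity via Claim~\ref{ConvexFunctionsBasic}, and almost-maximizers converging to $\nabla\varphi(x)$. Rerunning the argument for $t<0$ is equivalent to the paper's $-\zeta$ trick, but longer. Once the one-sided case is established, you could simply invoke that trick.
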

\noindent{\bf Remark.} Proposition~2.1 in Rotem \cite{R22} actually only states \eqref{limt-variation-diffphi-eq}, but as the paper later remarks, if $\zeta$ is continuous and bounded, then applying \eqref{limt-variation-diffphi-eq} to $-\zeta$, as well, yields \eqref{limt-variation-diffphi0-eq}.  

\begin{prop}
\label{variation-phi+tzeta}
Let $q\in\R$ and $m\in\{1,\ldots,n-1\}$. Let $f=e^{-\varphi}$ be an upper semicontinuous log-concave function  on $\Rn$ such that $0<\int_{\Rn} f<\infty$, $o\in{\rm int}\,D_f$ and $f(o)=\sup f>0$. Let $\zeta\in C_c(\R^n)$. Then $f_t=e^{-\varphi_t}$ for $\varphi_t=(\varphi^*+t\zeta)^*$ and $t\in\R$ satisfies that
\begin{align}
\label{variation-phi+tzeta-eq}
\lim_{t\to 0}\frac{\widetilde{\Psi}_{m,q}(f_t)-\widetilde{\Psi}_{m,q}(f)}t
=&q\int_{\Rn}\zeta\,d\widetilde{A}_{m,q}^{e}(f,\cdot),\mbox{ \ \ }q\neq 0;\\ 
\label{variation-phi+tzeta0-eq}
\lim_{t\to 0}\frac{\widetilde{\Psi}_{m,0}(f_t)-\widetilde{\Psi}_{m,0}(f)}t
=&\int_{\Rn}\zeta\,d\widetilde{A}_{m,0}^{e}(f,\cdot),
\end{align}
where the integrals occurring \eqref{variation-phi+tzeta-eq} and \eqref{variation-phi+tzeta0-eq} are finite.
\end{prop}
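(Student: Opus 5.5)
We follow the scheme of Proposition~\ref{variation-phi+thL}, which is much easier here because the perturbation is bounded. Let $N>0$ satisfy $|\zeta|\leq N$. As in the proof of Claim~\ref{exponential-upperbound}, we have $\varphi-|t|N\leq\varphi_t\leq\varphi+|t|N$, so ${\rm int}\,D_{f_t}={\rm int}\,D_f$ and $e^{-N|t|}f\leq f_t\leq e^{N|t|}f$. Take $t_0$ as in Corollary~\ref{Taylorqb} for this $\zeta$. For $0<|t|<t_0$ write
$$
\frac{\widetilde{\Psi}_{m,q}(f_t)-\widetilde{\Psi}_{m,q}(f)}t=q\int_{{\rm G}(n,m)}b_{t,\xi}^{q-1}\int_\xi\frac{f_t(x)-f(x)}t\,d\HH^m(x)\,d\xi
$$
using \eqref{Taylorqb-formula}. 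For $q=0$ we use \eqref{Taylorqb-formula0} and drop the factor $q$.

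Next we identify the pointwise limit of the integrand. By \eqref{limt-variation-diffphi0-eq} in Lemma~\ref{limt-variation-diffphi}, at every $x\in{\rm int}\,D_f$ where $\varphi$ is differentiable we get
$$
\frac{f_t(x)-f(x)}t=f(x)\,\frac{e^{-(\varphi_t(x)-\varphi(x))}-1}t\to \zeta(\nabla\varphi(x))f(x).
$$
We must check that this holds for $\HH^m$-a.e.\ $x\in\xi$ for $\nu_m$-a.e.\ $\xi$. This follows from polar coordinates and Fubini: the non-differentiability set $\Omega\setminus\Omega'$ is $\HH^n$-null, hence null on a.e.\ ray, hence null on a.e.\ $\xi$. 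Off ${\rm cl}\,D_f$ both $f_t$ and $f$ vanish. The boundary $\partial D_f$ is $\HH^n$-null, so it is also negligible on a.e.\ $\xi$. By \eqref{Taylorqb-limit}, $b_{t,\xi}^{q-1}\to \|f|_\xi\|_1^{q-1}$.

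For domination we use $|e^{a}-1|\leq |a|e^{|a|}$ together with $|\varphi_t-\varphi|\leq N|t|$. This gives $|f_t-f|/|t|\leq Ne^{Nt_0}f(x)\leq Ne^{Nt_0}e^{-\alpha\|x\|-\beta}$ by \eqref{log-concave-f-exp-above}, and $b_{t,\xi}^{q-1}\leq A$ by \eqref{Taylorqb-upper}. The function $A N e^{Nt_0}e^{-\alpha\|x\|-\beta}$ is integrable over ${\rm G}(n,m)\times\xi$: its inner integral over $\xi$ is bounded uniformly in $\xi$, and $\nu_m$ is a probability measure. Lebesgue's Dominated Convergence Theorem then yields
$$
\lim_{t\to0}\frac{\widetilde{\Psi}_{m,q}(f_t)-\widetilde{\Psi}_{m,q}(f)}t=q\int_{{\rm G}(n,m)}\int_\xi\|f|_\xi\|_1^{q-1}\zeta(\nabla\varphi(x))f(x)\,dx\,d\xi .
$$

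Finally, $|\zeta|\leq N$ is Borel, so Lemma~\ref{tildeA-e-calculation} applies with $\alpha_0=\beta_0=N$. It identifies the right-hand side with $q\int_{\R^n}\zeta\,d\widetilde{A}^e_{m,q}(f,\cdot)$ and gives finiteness. The case $q=0$ is identical. The only delicate point is the a.e.\ statement on sections $\xi$, which is settled by the Fubini argument above. No boundary term appears because $\zeta$ is bounded: near $\partial D_f$ the perturbation only rescales $f$ by a factor $e^{O(t)}$, and does not move the level sets the way $h_L$ does in Proposition~\ref{variation-phi+thL}.
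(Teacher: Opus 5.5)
Your proposal is correct and follows essentially the same route as the paper: rewrite the difference quotient with the Taylor coefficients $b_{t,\xi}$ from Corollary~\ref{Taylorqb}, obtain the pointwise limit from Lemma~\ref{limt-variation-diffphi} at points of differentiability on almost every section, dominate using $|f_t-f|\leq C N|t| f$, and conclude with Lemma~\ref{tildeA-e-calculation}. The differences are cosmetic: you dominate via the exponential bound \eqref{log-concave-f-exp-above} rather than by $2ANf$ together with $\widetilde{\Psi}_{m,1}(f)<\infty$, and you treat $\partial D_f$ and the complement of ${\rm cl}\,D_f$ explicitly, which the paper leaves implicit.
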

\begin{proof} 
During the argument, we breviate $d\nu_{n,m}(\xi)$ to $d\xi$ when integrating on ${\rm G}(n,m)$. There exists some $N>0$ such that
\begin{equation}
\label{variation-phi-tzeta-zetaN}
|\zeta(x)|\leq N \mbox{ \  for }x\in\R^n,\mbox{ \ and hence }e^{-N|t|}f(x)\leq f_t(x)\leq e^{N|t|}f(x)
\end{equation}
holds (cf. \eqref{Legendre-shift} and \eqref{Legendre-monotone}) for any $x\in\R^n$ and $t\in\R$. We deduce the finiteness of
the integrals occurring in \eqref{variation-phi+tzeta-eq} and in \eqref{variation-phi+tzeta0-eq} from Proposition~\ref{tildeA-finiteness}.

We choose a $t_0\in(0,\frac1{2N}]$ depending on $f$ and  $\zeta$ such that Corollary~\ref{Taylorqb} holds for $t\in(-t_0,t_0)$. According to Corollary~\ref{Taylorqb}, there exist $A>0$ depending on $f$ and $\zeta$ such that if $t\in(-t_0,t_0)$ and $\xi\in{\rm G}(n,m)$, then
\begin{equation}
\label{variation-phi-tzeta-ftxi-q-1}
\left\|f_t|_\xi\right\|_1^{q-1}\leq A.
\end{equation}

First, let $q\neq 0$. Now Corollary~\ref{Taylorqb} yields the existence of $A>0$ depending on $f$ and $\zeta$ such that if  $t\in(-t_0,t_0)$  and $\xi\in{\rm G}(n,m)$, then there exists a $b_{t,\xi}\in\R$ satisfying
\begin{align}
\label{variation-phi-tzeta-fxi-tautxi1}
\left\|f_t|_\xi\right\|_1^q-\left\|f|_\xi\right\|_1^q=&q\cdot b_{t,\xi}^{q-1}\cdot \left(\left\|f_t|_\xi\right\|_1-\left\|f|_\xi\right\|_1\right),\\
\label{variation-phi-tzeta-fxi-btxi-continuous}
b_{t,\xi}\mbox{ is a continuous function}&\mbox{ of $\xi\in{\rm G}(n,m)$ for a fixed $t\in(-t_0,t_0)$,}\\
\label{variation-phi-tzeta-fxi-tautxi2}
b_{t,\xi}^{q-1}\leq &A,\\
\label{variation-phi-thL-fxi-tautxi3}
\lim_{t\to 0}b_{t,\xi}=&({\rm R}_mf).
\end{align}
For fixed $t$ with $0<|t|<t_0$, we deduce from
 \eqref{variation-phi-tzeta-fxi-tautxi1} and  \eqref{fxi-representations-Ball-eq} in Lemma~\ref{fxi-representations} that 
\begin{align}
\nonumber
\frac{\widetilde{\Psi}_{m,q}(f_t)-\widetilde{\Psi}_{m,q}(f)}t=&\int_{G(n,m)}\frac{\|f_t|_{\xi}\|_1^q-\|f|_{\xi}\|_1^q}t\,d\xi\\
\label{variation-phi-tzta-tildePsimq-diff}
=&q\int_{{\rm G}(n,m)}\int_{\xi} b_{t,\xi}^{q-1}  \cdot \frac{f_t(x)-f(x)}t\,dx\,d\xi.
\end{align}
To calculate the limit of the double integral in \eqref{variation-phi-tzta-tildePsimq-diff}, we apply Lebesgue's Dominant Convergence theorem. For the integrable pointwise upper bound, we deduce from \eqref{variation-phi-tzeta-zetaN} and \eqref{variation-phi-tzeta-fxi-tautxi2} that if $t\in(-t_0,t_0)$ (and hence $N|t|\leq 1$), $\xi\in G(n,m)$ and $x\in\xi$, then
$$
b_{t,\xi}^{q-1} \cdot \left|\frac{f_t(x)-f(x)}t\right|\leq 2ANf(x),
$$
where  
$$
\int_{{\rm G}(n,m)}\int_{\xi}2ANf(x)\,dx\,d\xi=
2AN\widetilde{\Psi}_{m,1}(f)<\infty.
$$
On the other hand, $\varphi^*$ is finite in a neighbourhood of the origin as $\varphi$ is a coercive, and hence
\eqref{variation-phi-thL-fxi-tautxi3} and Lemma~\ref{limt-variation-diffphi} imply that if
$\xi\in G(n,m)$ and $\varphi$ is differentiable at an $x\in\xi$, then
$$
\lim_{t\to 0}b_{t,\xi}^{q-1}  \cdot \frac{f_t(x)-f(x)}t=\left\|f|_\xi\right\|_1^{q-1}  \cdot \zeta(\nabla\varphi(x))\cdot f(x).
$$
Since $\varphi$ is $\HH^n$ a.e. differentiable at the points of a Borel set on $\R^n$, the duality formula \eqref{Rm-dualRm-sphere} yields that $\nu_{n,m}$ a.e. $\xi\in {\rm G}(n,m)$ satisfies the property that $\varphi$ is differentiable at $\HH^m$ a.e. point of $\xi$. In turn, we conclude via Lebesgue's Dominant Convergence theorem and Lemma~\ref{tildeA-e-calculation} that
\begin{align*}
\lim_{t\to 0}\frac{\widetilde{\Psi}_{m,q}(f_t)-\widetilde{\Psi}_{m,q}(f)}t=&q\int_{{\rm G}(n,m)}\int_{\xi}
\left\|f|_\xi\right\|_1^{q-1}  \cdot \zeta(\nabla\varphi(x))\cdot f(x)\,dx\,d\xi\\
=&q\int_{\Rn}\zeta\,d\widetilde{A}_{m,q}^{e}(f,\cdot).
\end{align*}

If $q=0$ in Proposition~\ref{variation-phi+tzeta}, then the only change in the argument is that \eqref{variation-phi-tzeta-fxi-tautxi1} is replaced by
\begin{equation*}
\label{variation-phi-thL-fxi-tautxi0}
\log \left\|f_t|_\xi\right\|_1-\log \left\|f|_\xi\right\|_1=b_{t,\xi}^{-1}\cdot \left(\left\|f_t|_\xi\right\|_1-\left\|f|_\xi\right\|_1\right),
\end{equation*}
completing the proof of Proposition~\ref{variation-phi+tzeta}.
\end{proof}

\section{The proof of Theorem~\ref{Amq-pair-measures} }
\label{secTheorem1.1}

In order to simplify some formulas, for a log-concave function $f$ on $\R^n$ with $0<\int_{\Rn} f < \infty$, we set
\begin{equation}
\label{eq:thm11-normalized-functional}
 \widetilde{\Psi}^*_{m,q}(f)=
 \begin{cases}
  \dfrac{1}{q}\,\widetilde{\Psi}_{m,q}(f),&q\ne0,\\
  \widetilde\Psi_{m,0}(f),&q=0,
 \end{cases}
\end{equation}
and  for $q\in\R$ and $s>0$ we define
\[
 \Phi_q(s)=
 \begin{cases}
  s^q/q,&q\ne0,\\
  \log s,&q=0,
 \end{cases}
\]
and hence $\Phi_q$ is increasing for all $q\in\R$. It follows that for $q\in\R$ and
upper semicontinuous log-concave function $f=e^{-\varphi}$ on $\R^n$ with $0<\int_{\Rn} f < \infty$, we have
\begin{equation}
\label{Psi*Phidef}
 \widetilde{\Psi}^*_{m,q}(f)
 =\int_{{\rm G}(n,m)}\Phi_q\bigl(\|f|_\xi\|_1\bigr)\,d\nu_{m}(\xi),
\end{equation}
and  Corollary~\ref{variation-phi+tchL} yields that if $c>0$, $L\subset \R^n$ is a compact convex set with $o\in L$, and $c\mathbf{1}_L=e^{-\psi}$, then $f_t=f\oplus t\cdot (c\mathbf{1}_L)=e^{-(\varphi^*+t\psi^*)^*}$ for $t\ge  0$ satisfies that  
\begin{equation}
\label{variation-phi+tchL-again}
\lim_{t\to 0^+}\frac{\widetilde{\Psi}^*_{m,q}(f_t)-\widetilde{\Psi}^*_{m,q}(f)}t
=\int_{\Rn}\psi^*\,d\widetilde{A}_{m,q}^{e}(f,\cdot)+\int_{S^{n-1}}h_{L}\,d\widetilde{A}_{m,q}^{s}(f,\cdot).
\end{equation}

\begin{proof}[Proof of Theorem~\ref{Amq-pair-measures}]
We recall that $q\in\R$, $m\in\{1,\ldots,n-1\}$, $f=e^{-\varphi}$ and $g=e^{-\psi}$ are
upper semicontinuous log-concave functions  on $\R^n$ such that  $0<\int_{\Rn} f < \infty$, $o\in{\rm int}\,D_f$ and $f(o)=\max f$, while $g(o)>0$ and $g$ has compact support. Let $L={\rm cl}D_g$, that is a compact convex set with $o\in L$.
 In addition, for any integer $k>g(o)^{-1}$, let $L_k=\{x\in L:g(x)\geq \frac1k\}$, which is a compact convex set with $o\in L_k$. Since the set of points $x\in{\rm int}\,D_f$ where $\varphi$ is not differentiable is a Borel set of zero $\HH^n$ measure, we deduce from the duality formula \eqref{Rm-dualRm-sphere} that for $\nu_{m}$ a.e. $\xi\in{\rm G}(n,m)$, 
\begin{equation}
\label{phi-diff-on-xi} 
\varphi\mbox{ is }\HH^m\mbox{ a.e. differentiable on }\xi.
\end{equation}
For $f_t = f\oplus(t\cdot g)=e^{-(\varphi^*+t\psi^*)^*}$, the core claims are that 
\begin{align}
\label{logconv-g-core-upper}
\limsup_{t\to 0^+}\frac{\widetilde{\Psi}^*_{m,q}(f_t)-\widetilde{\Psi}^*_{m,q}(f)}t
\leq &\int_{\Rn}\psi^*\,d\widetilde{A}_{m,q}^{e}(f,\cdot)+\int_{S^{n-1}}h_{L}\,d\widetilde{A}_{m,q}^{s}(f,\cdot),\\
\label{logconv-g-core-lower}
\liminf_{t\to 0^+}\frac{\widetilde{\Psi}^*_{m,q}(f_t)-\widetilde{\Psi}^*_{m,q}(f)}t
\geq &\int_{\Rn}\psi^*\,d\widetilde{A}_{m,q}^{e}(f,\cdot)+\int_{S^{n-1}}h_{L_k}\,d\widetilde{A}_{m,q}^{s}(f,\cdot).
\end{align}
Since $g$ is compactly supported, there exists $C>0$ such that $g\leq C 1_{L}:=e^{-\tilde{\psi}}$, and for $k>g(o)^{-1}$, let  $\frac1k\, 1_{L_k}=e^{-\hat{\psi}_k}\leq g$. We consider 
\begin{align}
\label{tildef-f}
\tilde{f}_t = &f\oplus(t\cdot C 1_{L})=e^{-(\varphi^*+t\tilde{\psi}^*)^*}\geq f_t,\\
\label{hatf-f}
\hat{f}_{k,t} = &f\oplus(t\cdot \mbox{$\frac1k$}\, 1_{L_k})=e^{-(\varphi^*+t\hat{\psi}_k^*)^*}\leq f_t.
\end{align}
We choose $\sigma>0$ such that $L\subset\sigma B^n$, and hence $L_k\subset\sigma B^n$, as well, and we choose $t_0>0$ coming from Corollary~\ref{Taylorqb} that depends on $q$, $\sigma$, $f$, and works for both $c=C$ or $c=\frac1k$. 
Applying the Taylor formula to $\Phi_q$, we deduce that for any $t\in(-t_0,t_0)$ and $\xi\in{\rm G}(n,m)$, there exist $\tilde{b}_{t,\xi}$ and $\hat{b}_{k,t,\xi}$ with $\left\|f_t|_\xi\right\|_1\leq \tilde{b}_{t,\xi}\leq \left\|\tilde{f}_t|_\xi\right\|_1$ and $\left\|\hat{f}_{k,t}|_\xi\right\|_1\leq \hat{b}_{k,t,\xi}\leq \left\|f_t|_\xi\right\|_1$ such that
\begin{align}
\label{tildef-f-b-def}
\Phi_q\left(\left\|\tilde{f}_t|_\xi\right\|_1\right)-\Phi_q\left(\left\|f_t|_\xi\right\|_1\right)=& \tilde{b}_{t,\xi}^{q-1}\cdot \left(\left\|\tilde{f}_t|_\xi\right\|_1-\left\|\tilde{f}|_\xi\right\|_1\right),\\
\label{hatf-f-b-def}
\Phi_q\left(\left\|f_t|_\xi\right\|_1\right)-\Phi_q\left(\left\|\hat{f}_{k,t}|_\xi\right\|_1\right)=& \hat{b}_{k,t,\xi}^{q-1}\cdot \left(\left\|f_t|_\xi\right\|_1-\left\|\hat{f}_{k,t}|_\xi\right\|_1\right).
\end{align}
We deduce that $\tilde{b}_{t,\xi}$ and $\hat{b}_{k,t,\xi}$ are continuous functions of $\xi\in{\rm G}(n,m)$ for a fixed $t\in(-t_0,t_0)$, and if  $\xi\in{\rm G}(n,m)$, then 
Corollary~\ref{Taylorqb} and $\left\|\hat{f}_{k,t}|_\xi\right\|_1\leq \hat{b}_{k,t,\xi} \leq \tilde{b}_{t,\xi}\leq \left\|\tilde{f}_t|_\xi\right\|_1$ yield that
\begin{equation}
\label{Th11-b-limit}
\lim_{t\to 0^+}\tilde{b}_{t,\xi}=\lim_{t\to 0^+}\hat{b}_{k,t,\xi}=\left\|f|_\xi\right\|_1. 
\end{equation}

In order to apply Lemma~\ref{limt-variation-diffphi}, we observe that $\psi^*$, $\tilde{\psi}^*$ and $\hat{\psi}^*_{k}$ are lower semicontinuous,  $\tilde{\psi}^*=h_L+\log C$ and $\hat{\psi}^*_{k}=h_{L_k}+\log \frac1k$ are bounded from below because $o\in L_k\subset L$, and $\psi^*$ is also bounded from below because $\psi(o)<\infty$. We deduce from Lemma~\ref{limt-variation-diffphi} that if $\varphi$ is differentiable at an $x\in\Rn$, then
\begin{align}
\label{Th11-Rotem-limit-psi}
\lim_{t\to 0^+}\frac{f_t(x)-f(x)}t=&\psi^*(\nabla \varphi(x))\cdot f(x),\\
\label{Th11-Rotem-limit-tildepsi}
\lim_{t\to 0^+}\frac{\tilde{f}_t(x)-f(x)}t=&\tilde{\psi}^*(\nabla \varphi(x))\cdot f(x),\\
\label{Th11-Rotem-limit-hatpsi}
\lim_{t\to 0^+}\frac{\hat{f}_{k,t}(x)-f(x)}t=&\hat{\psi}_{k}^*(\nabla \varphi(x))\cdot f(x).
\end{align}
As $o\in L_k\subset L\subset \sigma B^n$, writing $\beta_0=\max\{|\log C|,|\log k|\}$, the condition $\hat{\psi}^*_{k}\leq \psi\leq \tilde{\psi}$ following from $\frac1k\mathbf{1}_{L_k}\leq g\leq C\mathbf{1}_L$ yields that
\begin{equation}
\label{Th11-psi-etc-upper}
|\hat{\psi}_{k}^*(x)|,\;|\psi(x)|,\;|\tilde{\psi}(x)|\leq \sigma\|x\|+\beta_0\mbox{ \ for any }x\in\R^n.
\end{equation}

To prove the core claim \eqref{logconv-g-core-upper}, 
we note that $\tilde{f}_t\geq f_t$ and
 $\widetilde{\Psi}^*_{m,q}(\tilde{f}_t)-\widetilde{\Psi}^*_{m,q}(f_t)\geq 0$ as $\Phi_q$ is monotone increasing (cf. \eqref{Psi*Phidef}), and hence we can apply Fatou's lemma to $\frac{\tilde{f}_t-f_t}t$.
We also note that if $\varphi$ is differentiable at an $x\in\Rn$, then \eqref{Th11-Rotem-limit-psi} and \eqref{Th11-Rotem-limit-tildepsi} yield that 
\begin{equation}
\label{T11-limit-tildef-f-difference}
\lim_{t\rightarrow 0^{+}}\frac{\tilde{f}_t(x)-f_t(x)}{t}=
\left(\tilde{\psi}^*(\nabla \varphi(x))-\psi^*(\nabla \varphi(x))\right)\cdot f(x).
\end{equation}
We deduce from \eqref{Psi*Phidef} and \eqref{tildef-f-b-def}, then from Fatou's lemma, after that from \eqref{phi-diff-on-xi}, 
\eqref{Th11-b-limit} and \eqref{T11-limit-tildef-f-difference}, and finally from Lemma~\ref{tildeA-e-calculation} (cf. \eqref{Th11-psi-etc-upper}) that 
\begin{align*}
\liminf_{t\rightarrow 0^{+}}\frac{\widetilde{\Psi}^*_{m,q}(\tilde{f}_t)-\widetilde{\Psi}^*_{m,q}(f_t)}t=
&\liminf_{t\rightarrow 0^{+}}\int_{{\rm G}(n,m)}\int_{\xi}\tilde{b}_{t,\xi}^{q-1}\cdot\frac{\tilde{f}_t(x)-f_t(x)}{t}dxd\xi \\
\geq& \int_{{\rm G}(n,m)}\int_{\xi} \liminf_{t\rightarrow 0^{+}}\tilde{b}_{t,\xi}^{q-1}\cdot\frac{\tilde{f}_t(x)-f_t(x)}{t}dxd\xi\\
=&\int_{{\rm G}(n,m)}\int_{\xi}
\left\|f|_\xi\right\|_1^{q-1}\left(\tilde{\psi}^*(\nabla\varphi)-\psi^*(\nabla\varphi)\right) f\,d\HH^m\,d\xi\\
=&\int_{\Rn}\left(\tilde{\psi}^*-\psi^*\right)\,d\widetilde{A}_{m,q}^{e}(f,\cdot).
\end{align*}
Therefore, \eqref{variation-phi+tchL-again} implies that
\begin{align*}
\limsup_{t\rightarrow 0^{+}}\frac{\widetilde{\Psi}^*_{m,q}(f_t)-\widetilde{\Psi}^*_{m,q}(f)}t=&\lim_{t\rightarrow 0^{+}}\frac{\widetilde{\Psi}^*_{m,q}(\tilde{f}_t)-\widetilde{\Psi}^*_{m,q}(f)}t\\
&-\liminf_{t\rightarrow 0^{+}}\frac{\widetilde{\Psi}^*_{m,q}(\tilde{f}_t)-\widetilde{\Psi}^*_{m,q}(f_t)}t\\
\leq
& \int_{\Rn}\tilde{\psi}^*\,d\widetilde{A}_{m,q}^{e}(f,\cdot)+\int_{S^{n-1}}h_L\,d\widetilde{A}_{m,q}^{s}(f,\cdot)\\
&-\int_{\Rn}\left(\tilde{\psi}^*-\psi^*\right)\,d\widetilde{A}_{m,q}^{e}(f,\cdot)\\
=&\int_{\Rn}\psi^*\,d\widetilde{A}_{m,q}^{e}(f,\cdot)+\int_{S^{n-1}}h_L\,d\widetilde{A}_{m,q}^{s}(f,\cdot),
\end{align*}
proving \eqref{logconv-g-core-upper}.

To prove the other core claim \eqref{logconv-g-core-lower}, we apply a similar argument.
We deduce from \eqref{Psi*Phidef} and \eqref{hatf-f-b-def}, then from Fatou's lemma, after that from \eqref{phi-diff-on-xi}, \eqref{Th11-b-limit}, \eqref{Th11-Rotem-limit-psi} and \eqref{Th11-Rotem-limit-hatpsi}, and finally from Lemma~\ref{tildeA-e-calculation} (cf. \eqref{Th11-psi-etc-upper}) that 
\begin{align*}
\liminf_{t\rightarrow 0^{+}}\frac{\widetilde{\Psi}^*_{m,q}(f_t)-\widetilde{\Psi}^*_{m,q}(\hat{f}_{k,t})}t
=&\liminf_{t\rightarrow 0^{+}}\int_{{\rm G}(n,m)}\int_{\xi}\hat{b}_{k,t,\xi}^{q-1}\cdot\frac{f_t(x)-\hat{f}_{k,t}(x)}{t}dxd\xi \\
\geq& \int_{{\rm G}(n,m)}\int_{\xi} \liminf_{t\rightarrow 0^{+}}\hat{b}_{k,t,\xi}^{q-1}\cdot\frac{f_t(x)-\hat{f}_{k,t}(x)}{t}dxd\xi\\
=&\int_{{\rm G}(n,m)}\int_{\xi}
\left\|f|_\xi\right\|_1^{q-1}\left(\psi^*(\nabla\varphi)-\hat{\psi}_k^*(\nabla\varphi)\right) f\,d\HH^m\,d\xi\\
=&\int_{\Rn}\left(\psi^*-\hat{\psi}_k^*\right)\,d\widetilde{A}_{m,q}^{e}(f,\cdot).
\end{align*}
Therefore, \eqref{variation-phi+tchL-again} implies that
\begin{align*}
\liminf_{t\rightarrow 0^{+}}\frac{\widetilde{\Psi}^*_{m,q}(f_t)-\widetilde{\Psi}^*_{m,q}(f)}t=&\liminf_{t\rightarrow 0^{+}}\frac{\widetilde{\Psi}^*_{m,q}(f_t)-\widetilde{\Psi}^*_{m,q}(\hat{f}_{k,t})}t\\
&+\lim_{t\rightarrow 0^{+}}\frac{\widetilde{\Psi}^*_{m,q}(\hat{f}_{k,t})-\widetilde{\Psi}^*_{m,q}(f)}t\\
\geq &\int_{\Rn}\left(\psi^*-\hat{\psi}_k^*\right)\,d\widetilde{A}_{m,q}^{e}(f,\cdot)+\\
& +\int_{\Rn}\hat{\psi}_k^*\,d\widetilde{A}_{m,q}^{e}(f,\cdot)+\int_{S^{n-1}}h_{L_k}\,d\widetilde{A}_{m,q}^{s}(f,\cdot)\\
=&\int_{\Rn}\psi^*\,d\widetilde{A}_{m,q}^{e}(f,\cdot)+\int_{S^{n-1}}h_{L_k}\,d\widetilde{A}_{m,q}^{s}(f,\cdot),
\end{align*}
proving \eqref{logconv-g-core-lower}.

Since $\lim_{k\to\infty}L_k=L$, combining \eqref{logconv-g-core-upper} and \eqref{logconv-g-core-lower} yields Theorem~\ref{Amq-pair-measures}.
\end{proof}

\section{Optimization problem and the proof of Theorem~\ref{Amq-even-Minkowski-pair-measures}}
\label{secTheorem1.3}

This section first discusses the related extremal problem, and then solves the related even Minkowski-type problem. An important tool is
Lemma~3.4 in \cite{FR26}.

\begin{lemma}[Falah, Rotem \cite{FR26}]
\label{mu-nu-linear-lower-bound}
Let $\mu$ and $\nu$ be finite Borel measures on $\R^n$ and on $S^{n-1}$,  respectively, such that $\mu$ has finite first moment, no linear hyperplane contains the union of the supports of $\mu$ and $\nu$, and $\mu+\nu$ is centered in the sense that
\begin{equation}
\label{mu-nu-linear-lower-bound-cond}
\int_{\R^n}x\,d\mu(x)+\int_{S^{n-1}}u\,d\nu(u)=o.
\end{equation}
Then there exists $\alpha>0$ depending on $\mu$ and $\nu$ satisfying that for any $\varphi\in {\rm Conv}_c^n$ with $\min\varphi=\varphi(o)$, and for any $x\in\R^n$, we have
\begin{equation*}
\label{mu-nu-linear-lower-bound-eq}
\varphi(x)\geq \alpha\|x\|-\frac1{\mu(\R^n)}\left(\int_{\R^n}\varphi^*\,d\mu(x)+\int_{S^{n-1}}\bar{\varphi}^*\,d\nu(u)\right).
\end{equation*}
 \end{lemma}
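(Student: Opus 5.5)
The plan is to fix $x\in\R^n$ and bound $\varphi^*$ on $\R^n$ and $\bar{\varphi}^*$ on $S^{n-1}$ from below, using only the convexity of $\varphi$ along the segment $[o,x]$. This turns the required inequality into a statement about the measures alone. That statement is then settled by a compactness argument on $S^{n-1}$, which is where the centering condition \eqref{mu-nu-linear-lower-bound-cond} and the hyperplane condition enter. Throughout, $\mu(\R^n)>0$ is assumed (otherwise the statement is void).

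\textbf{Reduction to $\varphi(x)<\infty$.} If $\varphi(x)=\infty$ there is nothing to prove. So let $a=\varphi(x)<\infty$ and $b=\varphi(o)=\min\varphi$, so that $b\leq a$.

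\textbf{Bound on $\varphi^*$.} Convexity gives $\varphi(\lambda x)\leq (1-\lambda)b+\lambda a$ for $\lambda\in[0,1]$. Therefore, for every $y\in\R^n$,
$$
\varphi^*(y)\geq \sup_{\lambda\in[0,1]}\bigl(\lambda\, x\cdot y-\varphi(\lambda x)\bigr)\geq -b+\bigl(x\cdot y-(a-b)\bigr)_+\geq -a+(x\cdot y)_+ .
$$
The last step uses $(s-c)_+\geq s_+-c$ for $c\geq 0$.

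\textbf{Bound on $\bar{\varphi}^*$.} Since $o,x\in{\rm dom}\,\varphi$, we have $\varphi^*(ru)\geq \max\{-b,\;r\,x\cdot u-a\}$ for $r>0$ and $u\in S^{n-1}$. Dividing by $r$ and letting $r\to\infty$ yields $\bar{\varphi}^*(u)\geq (x\cdot u)_+$; this is also visible from \eqref{barpsi-dompsi-bounded}.

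\textbf{Integration.} Integrating both bounds (the integrals make sense because $\mu$ has finite first moment and both measures are finite) gives
$$
\int_{\R^n}\varphi^*\,d\mu+\int_{S^{n-1}}\bar{\varphi}^*\,d\nu\geq -a\,\mu(\R^n)+G(x),\qquad G(x)=\int_{\R^n}(x\cdot y)_+\,d\mu(y)+\int_{S^{n-1}}(x\cdot u)_+\,d\nu(u).
$$
Hence it suffices to show $G(x)\geq \alpha\,\mu(\R^n)\,\|x\|$. Since $G$ is positively $1$-homogeneous, this reduces to proving $\min_{v\in S^{n-1}}G(v)>0$. Then $\alpha=\min_{S^{n-1}} G/\mu(\R^n)$ works, and it depends only on $\mu$ and $\nu$.

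\textbf{Main step: positivity of $G$ on $S^{n-1}$.} First, $G$ is continuous on $S^{n-1}$, since it is Lipschitz with constant $\int\|y\|\,d\mu+\nu(S^{n-1})$; so its minimum is attained. Suppose $G(v)=0$ for some $v\in S^{n-1}$. Then $v\cdot y\leq 0$ for $\mu$-a.e. $y$ and $v\cdot u\leq 0$ for $\nu$-a.e. $u$. Pairing the centering condition \eqref{mu-nu-linear-lower-bound-cond} with $v$ gives
$$
\int_{\R^n} v\cdot y\,d\mu+\int_{S^{n-1}} v\cdot u\,d\nu=0,
$$
an integral of a nonpositive function. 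It follows that $v\cdot y=0$ on ${\rm supp}\,\mu$ and $v\cdot u=0$ on ${\rm supp}\,\nu$, by continuity of the integrand. Thus both supports lie in $v^\bot$, contradicting the hypothesis that no linear hyperplane contains their union. I expect this step to be the main obstacle, but only mildly: everything before it is elementary convexity, and the care needed here is to pass from almost everywhere to the closed supports via continuity of $y\mapsto v\cdot y$.
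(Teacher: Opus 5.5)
Your proof is correct. The paper gives no proof of this lemma; it is quoted from Falah--Rotem, so there is nothing to compare step by step. Your argument is a valid self-contained replacement and follows the standard Cordero-Erausquin--Klartag pattern. A Fenchel--Young lower bound for $\varphi^*$ and $\bar{\varphi}^*$ reduces the claim to $\min_{v\in S^{n-1}}G(v)>0$. You then obtain this positivity correctly from continuity, compactness, the centering condition and the hyperplane condition. This includes the correct passage from ``$\mu$-a.e.'' to ${\rm supp}\,\mu$ via the open null set $\{y:v\cdot y\neq 0\}$.

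Three small comments:
\begin{enumerate}
\item Convexity of $\varphi$ along $[o,x]$ does no work. The map $\lambda\mapsto -b+\lambda\bigl(x\cdot y-(a-b)\bigr)$ is affine, so its supremum over $[0,1]$ is attained at an endpoint. Your bound is therefore exactly $\max\{-b,\,x\cdot y-a\}$, i.e.\ Fenchel--Young at the two points $o$ and $x$. Hence only $\varphi(o)=\min\varphi<\infty$ is used, not coercivity or ${\rm int}\,{\rm dom}\,\varphi\neq\emptyset$.
\item The integrals $\int_{\R^n}\varphi^*\,d\mu$ and $\int_{S^{n-1}}\bar{\varphi}^*\,d\nu$ are well defined simply because $\varphi^*\geq -b$ and $\bar{\varphi}^*\geq 0$. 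If one of them is $+\infty$, the inequality is trivial. The finite first moment is what you actually need: it lets you split off $-a\,\mu(\R^n)$ and makes $G$ finite and Lipschitz.
\item Your aside appealing to \eqref{barpsi-dompsi-bounded} applies only when ${\rm dom}\,\varphi$ is bounded, which is not assumed here. Your direct limit computation giving $\bar{\varphi}^*(u)\geq (x\cdot u)_+$ does not need it.
\end{enumerate}
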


We will also need Claim~3.6 from \cite{BLY26+}.

\begin{lemma}
\label{pancake}
Let $R>0$, $m\in\{1,\ldots,n-1\}$. There exist $\gamma,\varepsilon_0>0$ depending on $n,m,q,R$ such that if 
$F\subset R\,B^n$ is a convex body with $o\in{\rm int}\,F$, and $h_{F}(w), h_{F}(-w) \leq \varepsilon$ hold for some $w\in S^{n-1}$ and $\varepsilon\in(0,\varepsilon_0)$, then
\begin{align}
\label{pancake-eq}
\widetilde{\Psi}_{m,q}(F)\leq& \gamma \varepsilon^{\frac{\min\{q,m\}}2}&&\mbox{if }q>0,\\
\label{pancake-eq0}
%\widetilde{\Psi}_{m,q}(L)\leq& \gamma\log\varepsilon&&\mbox{if }q=0\\
\widetilde{\Psi}_{m,q}(F)\geq& \gamma\varepsilon^{\frac{q}2}&&\mbox{if }q<0,\\
\label{pancake-eq1}
\widetilde{\Psi}_{m,0}(F) \leq & \gamma \log \varepsilon &&\mbox{if }q=0.
\end{align}
\end{lemma}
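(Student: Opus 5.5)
The plan is to bound each section volume ${\rm V}_m(F\cap\xi)$, $\xi\in{\rm G}(n,m)$, by the volume of the corresponding section of a flat ``pancake'', and then integrate over the Grassmannian according to how steeply $\xi$ meets $w$. First I would use the hypotheses $F\subset RB^n$ and $h_F(\pm w)\leq\varepsilon$ to get
$$
F\subset P_\varepsilon:=RB^n\cap\{x:|x\cdot w|\leq \varepsilon\}.
$$
For $\xi\in{\rm G}(n,m)$ I set $a(\xi)=\|w|\xi\|\in[0,1]$, the length of the orthogonal projection of $w$ onto $\xi$. If $a(\xi)>0$, then for $x\in\xi$ we have $x\cdot w=x\cdot (w|\xi)$. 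Hence $P_\varepsilon\cap\xi$ is the $m$-ball $RB^n\cap\xi$ intersected with a slab in $\xi$ of half-width $\varepsilon/a(\xi)$. This yields a constant $c=c(n,m)$ with
$$
{\rm V}_m(F\cap\xi)\leq c\,R^{m-1}\min\{R,\varepsilon/a(\xi)\}.
$$
In the opposite direction, $o\in{\rm int}\,F$ gives ${\rm V}_m(F\cap\xi)>0$ for every $\xi$, so all the expressions below are defined.

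The key probabilistic input is the distribution of $a(\xi)$ under $\nu_m$. By rotation invariance, $a(\xi)^2$ has the law of $\|P_\xi w\|^2$ for a uniformly random $m$-subspace, which equals the law of $\sum_{i\leq m}\theta_i^2$ for $\theta$ uniform on $S^{n-1}$. This is a ${\rm Beta}(m/2,(n-m)/2)$ law, non-degenerate because $m\leq n-1$. Consequently I expect
$$
\nu_m\{\xi:a(\xi)<\delta\}\leq C_0\,\delta^m \quad\mbox{for } \delta\in(0,1),
$$
with $C_0=C_0(n,m)$, and I fix $\varepsilon_0$ small enough that $C_0\varepsilon_0^{m/2}\leq \frac12$. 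I then split ${\rm G}(n,m)$ into $G_1=\{a\geq\varepsilon^{1/2}\}$, of measure at least $\frac12$, and $G_2=\{a<\varepsilon^{1/2}\}$, of measure at most $C_0\varepsilon^{m/2}$. On $G_1$ the section bound gives ${\rm V}_m(F\cap\xi)\leq cR^{m-1}\varepsilon^{1/2}$, and on all of ${\rm G}(n,m)$ it gives ${\rm V}_m(F\cap\xi)\leq cR^m$.

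Each of the three cases then follows from this splitting.
\begin{itemize}
\item For $q>0$, I would integrate the upper bounds: $\widetilde\Psi_{m,q}(F)\leq (cR^{m-1})^q\varepsilon^{q/2}+(cR^m)^qC_0\varepsilon^{m/2}$. Since $\varepsilon<1$, this is at most $\gamma\varepsilon^{\min\{q,m\}/2}$, which is \eqref{pancake-eq}.
\item For $q<0$, the map $s\mapsto s^q$ is decreasing, so I would discard $G_2$ and integrate only over $G_1$: $\widetilde\Psi_{m,q}(F)\geq \frac12 (cR^{m-1})^q\varepsilon^{q/2}$, which is \eqref{pancake-eq0}.
\item For $q=0$, I would integrate the logarithm of the bounds: $\widetilde\Psi_{m,0}(F)\leq \nu_m(G_1)\log(cR^{m-1}\varepsilon^{1/2})+\nu_m(G_2)\log^+(cR^m)$. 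The first term is at most $\frac14\log\varepsilon+C$, and the second is a constant. After shrinking $\varepsilon_0$ so that the constants are absorbed, this gives $\widetilde\Psi_{m,0}(F)\leq\frac18\log\varepsilon$, which is \eqref{pancake-eq1}.
\end{itemize}

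The only genuinely non-routine step is the small-ball estimate $\nu_m\{a<\delta\}\lesssim\delta^m$. I would verify it either from the explicit Beta density of $a(\xi)^2$, which behaves like $s^{m/2-1}$ near $0$, or by integrating in polar coordinates using \eqref{dualRm-sphere}. Everything else reduces to elementary slab-in-ball volume bounds.
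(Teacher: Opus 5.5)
Your proof is correct. There is no proof in the paper to compare it with: the lemma is quoted as Claim~3.6 of \cite{BLY26+} and is not proved in the text, so your argument stands as a self-contained verification.

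The ingredients are all elementary and check out:
\begin{itemize}
\item the containment $F\subset RB^n\cap\{|x\cdot w|\leq\varepsilon\}$;
\item the slab-in-ball bound ${\rm V}_m(F\cap\xi)\leq c\,R^{m-1}\min\{R,\varepsilon/\|w|\xi\|\}$;
\item the ${\rm Beta}(m/2,(n-m)/2)$ law of $\|w|\xi\|^2$ under $\nu_m$.
\end{itemize}
Your threshold $\|w|\xi\|\geq\varepsilon^{1/2}$ produces the two terms $\varepsilon^{q/2}$ and $\varepsilon^{m/2}$. This is exactly the rate $\varepsilon^{\min\{q,m\}/2}$ in \eqref{pancake-eq}.

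Two small points should be made explicit in a write-up.
\begin{itemize}
\item The small-ball bound $\nu_m\{\xi:\|w|\xi\|<\delta\}\leq C_0\delta^m$ must hold uniformly for $\delta\in(0,1)$. When $n-m=1$ the Beta density carries the factor $(1-s)^{-1/2}$, so split at $\delta^2=\frac12$. Below this, the density is at most $\sqrt2\,s^{m/2-1}/B(\frac m2,\frac{n-m}2)$. Above it, the probability is trivially at most $1\leq 2^{m/2}\delta^m$.
\item In the case $q>0$ you use $\varepsilon<1$. This holds automatically because necessarily $C_0\geq1$, but it is cleaner to impose $\varepsilon_0\leq1$ directly.
\end{itemize}

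Your method also has room to spare. A threshold at $\varepsilon^{q/(q+m)}$ improves the exponent for $q>0$ to $qm/(q+m)\geq\min\{q,m\}/2$. For $q<0$, a fixed threshold $\delta_0$ with $C_0\delta_0^m\leq\frac12$ gives the stronger lower bound $c\,\varepsilon^{q}$. Only the divergence of $\frac1q\log\widetilde{\Psi}_{m,q}$ (respectively of $\widetilde{\Psi}_{m,0}$) to $-\infty$ is used in Proposition~\ref{mu-nu-Psimq-optimization}, and each of \eqref{pancake-eq}, \eqref{pancake-eq0}, \eqref{pancake-eq1} as you derive them suffices for that.
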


We are ready to prove the key ingredient towards solving the even functional centro-sectional Minkowski problem, which are divided into two cases ($q\neq 0$ and $q=0$). 

\begin{prop}
\label{mu-nu-Psimq-optimization}
Let $q\in\R$, $m\in\{1,\ldots,n-1\}$.  let $\mu$ and $\nu$ be finite even Borel measures on $\R^n$ and on $S^{n-1}$,  respectively, such that $\mu(\R^n)>0$, $\mu$ has finite first moment, and no linear hyperplane contains the union of the supports of $\mu$ and $\nu$.
For 
\begin{align*}
\mathcal{F}_q:=&\left\{\varphi\in {\rm Conv}_c^n:\frac1q\log\widetilde{\Psi}_{m,q}(e^{-\varphi})\geq 0\mbox{ \ and }\varphi\mbox{ is even}\right\}&&\mbox{if }q\neq 0,\\
\mathcal{F}_0:=&\left\{\varphi\in {\rm Conv}_c^n:\widetilde{\Psi}_{m,0}(e^{-\varphi})\geq 0\mbox{ \ and }\varphi\mbox{ is even}\right\}&&\mbox{if }q= 0,\\
G(\varphi):=& \int_{\R^n}\varphi^*\,d\mu+\int_{S^{n-1}}\bar{\varphi}^*\,d\nu \mbox{ \ \ \ for }\varphi\in \mathcal{F}_q&&\mbox{and }q\in\R,
\end{align*}
there exists a  $\varphi_0\in \mathcal{F}_q$ such that
\begin{equation}
\label{mu-nu-Psimq-optimization-existence}
G(\varphi_0)=\min_{\varphi\in \mathcal{F}_q} G(\varphi).
\end{equation}
 \end{prop}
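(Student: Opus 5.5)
We plan to run the direct method of the calculus of variations on $\mathcal{F}_q$, with Lemma~\ref{mu-nu-linear-lower-bound} supplying compactness and Lemma~\ref{Psimqf-continuous2} supplying closedness of the constraint.

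\emph{Lower bound for $G$ and a minimizing sequence.} Since $\mu,\nu$ are even, the centering condition \eqref{mu-nu-linear-lower-bound-cond} holds automatically. For even $\varphi\in{\rm Conv}_c^n$ we have $\min\varphi=\varphi(o)$, so Lemma~\ref{mu-nu-linear-lower-bound} gives
\[
\varphi(x)\geq \alpha\|x\|-\frac{G(\varphi)}{\mu(\R^n)} .
\]
Evaluating at $x=o$ yields $G(\varphi)\geq -\mu(\R^n)\varphi(o)$. We must also prevent $\varphi(o)\to-\infty$ inside $\mathcal{F}_q$. Suppose $\varphi(o)=-\log f(o)$ is very negative. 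Then $f(o)$ is huge, and the level set $\{\varphi\le \varphi(o)+1\}$ contains $c\,B^n$ with $c$ controlled by the linear lower bound. Since $\varphi^*\ge -\varphi(o)$ everywhere, we get $G(\varphi)\ge -\varphi(o)\mu(\R^n)$ directly, which tends to $+\infty$. So $\inf G>-\infty$ and the value $G$ controls $|\varphi(o)|$ from both sides. The upper side uses the constraint: if $\varphi(o)\to+\infty$, then $f\le e^{-\varphi(o)}$ and the functional $\widetilde\Psi_{m,q}$ is forced toward $0$ (for $q>0$) or $\infty$ (for $q<0$), contradicting membership in $\mathcal{F}_q$ unless the Ball body $K_m(f)$ grows. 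Such growth is in turn controlled by the linear bound.

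Next take a minimizing sequence $\varphi_k\in\mathcal{F}_q$ with $G(\varphi_k)\le G(\varphi_1)$. This gives the uniform bound $\varphi_k(x)\geq\alpha\|x\|+\beta$. We additionally need $\min\varphi_k\le M$. For this we use the constraint $\widetilde\Psi_{m,q}(e^{-\varphi_k})\ge1$ (for $q>0$). Combined with $e^{-\varphi_k}\le e^{-\varphi_k(o)}e^{-\alpha'\|x\|}$-type bounds, it yields an upper bound on $\varphi_k(o)$, and analogously for $q\le0$. Lemma~\ref{epiconvergence-selection} then gives a subsequence $\varphi_k\xrightarrow{\rm epi}\varphi_0\in{\rm Conv}_c(\R^n)$, and $\varphi_0$ is even.

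\emph{Main obstacle: nondegeneracy.} The hardest step is showing ${\rm int}\,{\rm dom}\,\varphi_0\neq\emptyset$, so that $o\in{\rm int}\,{\rm dom}\,\varphi_0$ by evenness and $\varphi_0\in{\rm Conv}_c^n$. Here Lemma~\ref{pancake} is the tool. Suppose the limit domain lies in a hyperplane $w^\perp$. Then for fixed levels $s$, the level sets $F^k_s=\{f_k\ge s\}$ have width $\le2\varepsilon_k\to0$ in direction $w$ and lie in $RB^n$ for $s$ bounded below. Via \eqref{fxi-representations-area-levelset-eq}, Corollary~\ref{Psimqf-Ball-body} and the pancake estimates applied to $K_m(f_k)$ (whose width in direction $w$ also shrinks, by the uniform exponential decay), $\widetilde\Psi_{m,q}(e^{-\varphi_k})$ tends to $0$ for $q>0$, to $\infty$ for $q<0$, and $\widetilde\Psi_{m,0}\to-\infty$ for $q=0$. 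In each case this violates the constraint defining $\mathcal{F}_q$. Making this rigorous means showing that the tails $s$ small, where level sets are large, contribute negligibly; we would handle this using the uniform bound $f_k\le e^{-\alpha\|x\|-\beta}$.

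\emph{Conclusion.} Once $\varphi_0\in{\rm Conv}_c^n$, Lemma~\ref{Psimqf-continuous2} shows the constraint passes to the limit, so $\varphi_0\in\mathcal{F}_q$. Finally we need lower semicontinuity, $G(\varphi_0)\le\liminf G(\varphi_k)$. Epiconvergence of $\varphi_k$ gives epiconvergence of $\varphi_k^*$, hence pointwise convergence on interiors of domains and $\liminf\varphi_k^*\ge\varphi_0^*$. The functions $\varphi_k^*$ are bounded below uniformly by $-\varphi_k(o)$, so Fatou's lemma applies to the $\mu$-term. For the $\nu$-term, $\bar\varphi_k^*=h_{{\rm cl}\,{\rm dom}\,\varphi_k}$, and lower semicontinuity of support functions of domains under epiconvergence ($\liminf h_{{\rm dom}\varphi_k}\ge h_{{\rm dom}\varphi_0}$) together with Fatou handles it. Therefore $\varphi_0$ attains the minimum.
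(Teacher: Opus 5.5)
Your proposal is correct and follows essentially the paper's route. The ingredients are the same: the Falah--Rotem linear lower bound; an upper bound on $\varphi_k(o)$ from the constraint (the paper makes this precise via $f_{(k)}\le\theta_{N_{(k)}}=\min\{e^{-N_{(k)}},e^{-\alpha\|x\|-\beta}\}$, which is the correct form of your ``$e^{-\varphi_k(o)}e^{-\alpha'\|x\|}$-type bound'', since a bound of that literal shape fails when $\varphi_k$ is flat near $o$); epi-compactness; nondegeneracy via Lemma~\ref{pancake} applied to $K_m(f_k)$ together with Corollary~\ref{Psimqf-Ball-body}; and Lemma~\ref{Psimqf-continuous2}. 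The only differences are these: the paper obtains nondegeneracy along the sequence itself, from a uniform inradius for $\{\varphi_{(k')}\le N+1\}$ and the slope estimate $\varphi_{(k')}(su_{k'})\ge N+s/r_{k'}$, which makes $\varrho_{K_m(f_{(k')})}(u_{k'})$ small without your separate small-$s$ tail estimate; and the paper does not spell out the lower semicontinuity $G(\varphi_0)\le\liminf_k G(\varphi_k)$, which your Fatou argument for $\varphi_k^*$ and $h_{{\rm dom}\,\varphi_k}$ supplies.
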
 
\begin{proof}  First we prove the existence of the extremizer $\varphi_0\in \mathcal{F}_q$. For any $q\in\R$, there exists some $r_q>0$ such that 
\begin{equation*}
\label{mu-nu-Psimq-optimization-psirq}
\psi_{(q)}\in \mathcal{F}_q\mbox{ \ for \ }\psi_{(q)}(x)=\left\{
\begin{array}{lcl}
0 &\mbox{ if }&x\in r_qB^n,\\
\infty &\mbox{ if }&x\not \in r_qB^n.
\end{array} \right.
\end{equation*}
We choose a sequence $\varphi_{(k)}\in \mathcal{F}_q$, $k\geq 1$, such that
$$
\inf_{\varphi\in \mathcal{F}_q} G(\varphi)=\lim_{k\to\infty}G(\varphi_{(k)}), 
$$
and hence we may assume that $f_{(k)}:=
e^{-\varphi_{(k)}}$ satisfies 
\begin{equation}
\label{mu-nu-Psimq-optimization-Psimqfk1}
\widetilde{\Psi}_{m,q}(f_{(k)})=
\left\{\begin{array}{lcl}
1&\mbox{ if }&q\neq 0,\\
0&\mbox{ if }&q=0,
\end{array}\right.
\end{equation}
and 
$G(\varphi_{(k)})\leq G(\psi_{(q)})$ for any $k\geq 1$; or in other words,
$$
\int_{\R^n}\varphi_{(k)}^*\,d\mu+\int_{S^{n-1}}\bar{\varphi}_{(k)}^*\,d\nu\leq G(\psi_{(q)}).
$$
As $\mu$, $\nu$ and $\varphi_{(k)}$ are even, the conditions about centering  (cf. \eqref{mu-nu-linear-lower-bound-cond}) and about the minimality at $o$ automatically holds in Lemma~\ref{mu-nu-linear-lower-bound}, and in turn
 Lemma~\ref{mu-nu-linear-lower-bound} yields the existence of an $\alpha>0$ depending on $\mu$ and $\nu$ such that if $k\geq 1$ and $x\in\R^n$, then
\begin{equation}
\label{mu-nu-Psimq-optimization-phik-linear-lower}
\varphi_{(k)}(x)\geq \alpha\|x\|+\beta
\end{equation}
for $\beta=-\frac{G(\psi_{(q)})}{\mu(\R^n)}$. For $k\geq 1$, let
\begin{equation}
\label{mu-nu-Psimq-optimization-phik-Mk} 
N_{(k)}=\varphi_{(k)}(o)=\min \varphi_{(k)}\geq \beta. 
\end{equation}
We deduce from \eqref{mu-nu-Psimq-optimization-phik-linear-lower} and \eqref{mu-nu-Psimq-optimization-phik-Mk} that for any $k\geq 1$ and $x\in\R^n$, we have
\begin{equation}
\label{mu-nu-Psimq-optimization-fk-thetaNk}
f_{(k)}(x)=e^{-\varphi_{(k)}(x)}\leq\theta_{N_{(k)}}(x),
\end{equation}
where for $\ell \geq \beta$ and $R_{\ell}=\frac{\ell-\beta}{\alpha}\geq  0$, the log-concave function $\theta_\ell$ on $\R^n$ is defined by the formula
$$
\theta_\ell(x)=
\left\{\begin{array}{lcl}
e^{-\ell}&\mbox{ if }&\|x\|\leq R_\ell,\\[1ex]
\exp\left(-\alpha\|x\|-\beta\right)&\mbox{ if }&\|x\|\geq R_\ell.
\end{array}\right.
$$
We observe that if $\xi\in{\rm G}(n,m)$, then
$$
\int_\xi \theta_\ell\,d\HH^m=\omega_m e^{-\ell}R_{\ell}^m +m\omega_m
\int_{R_{\ell}}^\infty e^{-\alpha r-\beta}r^{m-1}\,dr:=I_\ell<\infty,
$$
and there exists $\ell_0>0$ such that $\frac{\ell}{2\alpha}\leq R_\ell\leq \frac{2\ell}{\alpha}$ if $\ell\geq \ell_0$,
and hence $\theta_\ell$ satisfies  
\begin{equation}
\label{mu-nu-Psimq-optimization-theta-ell}
\theta_{\ell}\geq \theta_{\ell'}\mbox{ \ if }\ell\leq \ell',\mbox{ \ \ and \ \ } \lim_{\ell\to\infty}I_\ell=0.
\end{equation}
We deduce from \eqref{mu-nu-Psimq-optimization-fk-thetaNk}, \eqref{mu-nu-Psimq-optimization-theta-ell} and from the conditions 
$\frac1q\log\widetilde{\Psi}_{m,q}(f_{(k)})\geq 0$ if $q\neq 0$ and $\widetilde{\Psi}_{m,0}(f_{(k)})\geq 0$  if $q=0$, that there exists an $N>0$ such that $N_{(k)}\leq N$ for every $k\geq 1$; therefore, if $k\geq 1$ and $x\in\R^n$, then
\begin{equation}
\label{mu-nu-Psimq-optimization-fk-phik-bounded}
f_{(k)}(x)\leq\theta_{\beta}(x)\mbox{ \ and } \beta\leq \inf \varphi_{(k)}=\varphi_{(k)}(o)\leq N.
\end{equation}
It follows from Lemma~\ref{epiconvergence-selection} (cf. \eqref{mu-nu-Psimq-optimization-phik-linear-lower}) that there exists a subsequence $\{\varphi_{(k')}\}$ and a $\varphi_0\in {\rm Conv}_c(\R^n)$ such that $\varphi_{(k')}\xrightarrow{\rm epi} \varphi_0$.

In order to show that $\varphi_0\in {\rm Conv}_c^n(\R^n)$ (and hence $o\in{\rm int}\,{\rm dom}\,\varphi_0$), we verify that there exists $r_0>0$ such that
\begin{equation}
\label{mu-nu-Psimq-optimization-r0-non-degenerate}
r_0B^n\subset \{\varphi_{(k')}\leq N+1\}
\end{equation}
for each $\varphi_{(k')}$ involved in the definition of $\varphi_0$ (cf. Definition~\ref{epiconvergence-def}). Here we use in an essential way that each  $\varphi_{(k')}$ is even.

Indirectly, we suppose that \eqref{mu-nu-Psimq-optimization-r0-non-degenerate} does not hold, and seek a contradiction. After possibly taking a further subsequence, we may assume that
\begin{equation*}
\label{mu-nu-Psimq-optimization-rk-degenerate}
\lim_{k'\to\infty}r_{k'}=0\mbox{ \ holds for }
r_{k'}=\max\left\{r\geq 0:rB^n\subset \{\varphi_{(k')}\leq N+1\}\right\}.
\end{equation*}
In particular, there exists $u_{k'}\in S^{n-1}$ for each $k'$ such that $x\cdot u_{k'}\leq r_{k'}$ holds for any $x\in \{\varphi_{(k')}\leq N+1\}$, and as $\varphi_{(k')}(0)\leq N$ (cf. \eqref{mu-nu-Psimq-optimization-fk-phik-bounded}), we have
\begin{equation*}
\label{mu-nu-Psimq-optimization-phik-uk-lower}
\varphi_{(k')}(s u_{k'})\geq N+\frac{s}{r_{k'}}\mbox{ \ holds if }s\geq r_{k'} 
\end{equation*}
by the convexity of $\varphi_{(k')}$.
For $M=e^{-\beta}$,  \eqref{mu-nu-Psimq-optimization-phik-Mk}, \eqref{mu-nu-Psimq-optimization-fk-thetaNk} and \eqref{mu-nu-Psimq-optimization-fk-phik-bounded} yield for any $k'$ that
\begin{align}
\label{mu-nu-Psimq-optimization-f0-NMo}
e^{-N}\leq f_{(k')}(o)\leq & M, \\
\label{mu-nu-Psimq-optimization-f0-NMx}
 f_{(k')}(x)\leq & M  &&\mbox{for }x\in\R^n. 
\end{align}

Next, we consider the corresponding body  $K_m(f_{(k')})$ (cf. \eqref{Ball-body}). It follows from \eqref{mu-nu-Psimq-optimization-fk-phik-bounded} that there exists $R_0>0$ independent of $k'$ such that for any $k'$, we have
\begin{equation}
\label{mu-nu-Psimq-optimization-Ball-body-R}
K_m(f_{(k')})\subset R_0B^n. 
\end{equation}
For the radial function $\varrho_{(k')}$ of $K_m(f_{(k')})$, we deduce that
\begin{align}
\nonumber
\varrho_{(k')}(u_{k'})^m\leq &me^N\int_0^{r_{k'}} s^{m-1}M\,ds+me^N\int_{r_{k'}}^\infty s^{m-1} \exp\left(-\left(N+\frac{s}{r_{k'}}\right)\right)\,ds\\
 \label{mu-nu-Psimq-optimization-Ball-body-rho}
\leq & e^NM\cdot r_{k'}^m +m\int_{0}^\infty s^{m-1} \exp\left(-\frac{s}{r_{k'}}\right)\,ds,
\end{align}
where the right hand side of \eqref{mu-nu-Psimq-optimization-Ball-body-rho} tends to zero as $k'$ tends to infinity because $r_{k'}$ tends to zero. Writing $h_{(k')}$ to denote the support function of $K_m(f_{(k')})$, we conclude the existence of some $w_{k'}\in S^{n-1}$ for each $k'$ such that $h_{(k')}(w_{k'})=h_{(k')}(-w_{k'})$ tends to zero as $k'$ tends to infinity; therefore, combining 
\eqref{mu-nu-Psimq-optimization-f0-NMo}, \eqref{mu-nu-Psimq-optimization-Ball-body-R}, Lemma~\ref{pancake} and Corollary~\ref{Psimqf-Ball-body} yields that
\begin{align*}
\lim_{k'\to\infty}\frac1q\log \widetilde{\Psi}_{m,q}(f_{(k')})=
\lim_{k'\to\infty}\frac1q\log \left(f_{(k')}(o)^q\widetilde{\Psi}_{m,q}\left(K_m(f_{(k')})\right)\right)=&-\infty,&&q\neq 0,\\
\lim_{k'\to\infty} \widetilde{\Psi}_{m,0}(f_{(k')})=
\lim_{k'\to\infty}\left(\log f_{(k')}(o)+\widetilde{\Psi}_{m,0}\left(K_m(f_{(k')})\right)\right)=&-\infty,&&q= 0.
\end{align*}
This fact contradicts that $\varphi_{(k')}\in\mathcal{F}_q$, and proves \eqref{mu-nu-Psimq-optimization-r0-non-degenerate}.

According to Definition~\ref{epiconvergence-def}, \eqref{mu-nu-Psimq-optimization-r0-non-degenerate} yields that
$r_0B^n\subset \{\varphi_0\leq N+1\}$, and hence $\varphi_0\in {\rm Conv}_c^n(\R^n)$. We deduce from
\eqref{mu-nu-Psimq-optimization-Psimqfk1} and Lemma~\ref{Psimqf-continuous2} that $f_0:=
e^{-\varphi_0}$ satisfies
$$
\widetilde{\Psi}_{m,q}(f_0)=
\left\{\begin{array}{lcl}
1&\mbox{ if }&q\neq 0,\\
0&\mbox{ if }&q=0;
\end{array}\right.
$$ 
therefore, we have completed the proof of \eqref{mu-nu-Psimq-optimization-existence}.
\end{proof}

\begin{proof}[Proof of Theorem~\ref{Amq-even-Minkowski-pair-measures}]
The necessary conditions for the functional centro-sectional measures are consequences of Proposition~\ref{tildeA-finiteness} and Lemma~\ref{Amq-pair-measures-support}.

To prove the suffiency part for the existence of functional centro-sectional measures in
 Theorem~\ref{Amq-even-Minkowski-pair-measures}, let $q\in\R$, $m\in\{1,\ldots,n-1\}$. Let $\mu$ and $\nu$ be finite even Borel measures on $\R^n$ and on $S^{n-1}$,  respectively, such that $\mu(\R^n)>0$, $\mu$ has finite first moment, and no linear hyperplane contains the support of both $\mu$ and $\nu$.
Using the notation of Proposition~\ref{mu-nu-Psimq-optimization}, we show that there exist a $\lambda>0$ such that for the extremal $\varphi_0\in\mathcal{F}_q$ provided by Proposition~\ref{mu-nu-Psimq-optimization}, we have
\begin{align}
\label{mu-nu-Psimq-optimization-mu}
\widetilde{A}_{m,q}^{e}\left(\lambda\,e^{-\varphi_0},\cdot\right)=&\mu,\\
\label{mu-nu-Psimq-optimization-nu}
\widetilde{A}_{m,q}^{s}\left(\lambda\,e^{-\varphi_0},\cdot\right)=&\nu.
\end{align}
In order to verify \eqref{mu-nu-Psimq-optimization-mu} and \eqref{mu-nu-Psimq-optimization-nu}, let $L\subset\Rn$  be an $o$-symmetric convex body, and let $\zeta \in C_c(\Rn)$ be an even function.
For either $\eta=h_L$ or $\eta=\zeta$, let $\bar{\eta}:\R^n\to[0,\infty)$ be defined by
$$
\bar{\eta}(x)=\lim_{r\to\infty}\frac{\eta(rx)}{r}=
\left\{\begin{array}{lcl}
h_L(x)&\mbox{ if }&\eta=h_L,\\
0&\mbox{ if }&\eta=\zeta.
\end{array}\right.
$$
We observe that there exist $\alpha_0,\beta_0>0$ such that $\eta(x)\leq \alpha_0\|x\|+\beta_0$ for any $x\in\R^n$, and hence
 $\bar{\eta}(u)\leq \alpha_0$ for any $u\in S^{n-1}$.
For suitable $t_0>0$ (depending on $q,\varphi_0,L,\zeta$) and $t\in (t_0,t_0)$, we consider
$$
\varphi_t=
\left\{\begin{array}{lcl}
(\varphi_0^*+t\eta)^*+\frac1q\,\log \widetilde{\Psi}_{m,q}(e^{-(\varphi^*_0+t\eta)^*})&\mbox{ if }&q\neq 0,\\[1ex]
(\varphi_0^*+t\eta)^*+ \widetilde{\Psi}_{m,0}(e^{-(\varphi^*_0+t\eta)^*})&\mbox{ if }&q=0,
\end{array}\right.
$$
and hence $\varphi_t\in {\rm Conv}_c^n(\R^n)$ is even; moreover, we deduce from \eqref{Psi-Amq-homogeneity}, \eqref{Legendre-shift}, \eqref{Legendre-monotone} and \eqref{Legendre-stars} that 
\begin{align}
%\label{mu-nu-Psimq-optimization-phit-Psimq}
\widetilde{\Psi}_{m,q}(e^{-\varphi_t})=&
\left\{\begin{array}{lcl}
1&\mbox{ if }&q\neq 0,\\
0&\mbox{ if }&q=0,
\end{array}\right.
 \notag\\
\label{mu-nu-Psimq-optimization-phit-eta}
\varphi_t^*\leq &
\left\{\begin{array}{lcl}
\varphi_0^*+t\eta-\frac1q\,\log \widetilde{\Psi}_{m,q}(e^{-(\varphi^*_0+t\eta)^*})&\mbox{ if }&q\neq 0,\\[1ex]
\varphi_0^*+t\eta- \widetilde{\Psi}_{m,0}(e^{-(\varphi^*_0+t\eta)^*})&\mbox{ if }&q=0,
\end{array}\right. \\
%\label{mu-nu-Psimq-optimization-phit-etabar}
\overline{\varphi_t^*}\leq &\overline{\varphi_0^*}+t\bar{\eta}.\notag
\end{align}
In particular, $\varphi_t\in\mathcal{F}_q$ whenever $t\in (-t_0,t_0)$. We consider the continuous function $\aleph_q(t)$ on $(-t_0,t_0)$, where, if $q\neq 0$,
\begin{align*}
\aleph_q(t)=&\int_{\mathbb{R}^n}\varphi_0^*+t\eta-\frac1q\,\log \widetilde{\Psi}_{m,q}(e^{-(\varphi^*_0+t\eta)^*})\,d\mu+\int_{S^{n-1}}\overline{\varphi_0^*}+t\bar{\eta}\,d\nu;\\
\aleph_0(t)=&\int_{\mathbb{R}^n}\varphi_0^*+t\eta- \widetilde{\Psi}_{m,0}(e^{-(\varphi^*_0+t\eta)^*})\,d\mu+\int_{S^{n-1}}\overline{\varphi_0^*}+t\bar{\eta}\,d\nu
\end{align*}
that is finite, since $\mu$ has finite first moment and both $\mu$ and $\nu$ are finite. Moreover, it is differentiable at $t=0$ (cf. Proposition~\ref{variation-phi+thL} and Proposition~\ref{variation-phi+tzeta}). We deduce from \eqref{mu-nu-Psimq-optimization-existence} and \eqref{mu-nu-Psimq-optimization-phit-eta} that if $t\in (-t_0,t_0)$, then
$$
\aleph_q(0)=G(\varphi_0)\leq G(\varphi_t)\leq \aleph_q(t);
$$
therefore, we conclude from $\widetilde{\Psi}_{m,q}(e^{-\varphi_0})=1$ for $q\neq 0$, Proposition~\ref{variation-phi+thL} and Proposition~\ref{variation-phi+tzeta}, and from applying \eqref{Psi-Amq-homogeneity} with $\lambda=\left(\mu(\R^n)\right)^{\frac1q}$ if $q\neq 0$  and $\lambda=1$ for $q=0$ that for any $q\in\R$, we have
\begin{align}
\nonumber
0=\aleph'_q(0)=&\int_{\mathbb{R}^n}\eta\,d\mu+\int_{S^{n-1}}\bar{\eta}\,d\nu\\
\label{mu-nu-Psimq-optimization-aleph-der}
&-\int_{\Rn}\eta\,d\widetilde{A}_{m,q}^{e}(\lambda f_0,\cdot)-\int_{S^{n-1}}\bar{\eta}\,d\widetilde{A}_{m,q}^{s}(\lambda f_0,\cdot),
\end{align}
where all integrals occurring in \eqref{mu-nu-Psimq-optimization-aleph-der} are finite. 
If $\eta=\zeta$ for a  $\zeta \in C_c(\Rn)$, then \eqref{mu-nu-Psimq-optimization-aleph-der} reads as
$$
0=\int_{\mathbb{R}^n}\zeta\,d\mu-\int_{\Rn}\zeta\,d\widetilde{A}_{m,q}^{e}(\lambda f_0,\cdot).
$$
As  $\zeta \in C_c(\Rn)$ is arbitrary, we deduce \eqref{mu-nu-Psimq-optimization-mu}. Next, if $\eta=h_L$ for an $o$-symmetric convex body $L\subset\R^n$ in \eqref{mu-nu-Psimq-optimization-aleph-der}, then we deduce from \eqref{mu-nu-Psimq-optimization-aleph-der} and $\mu=\widetilde{A}_{m,q}^{e}(\lambda f_0,\cdot)$ that
$$
0=\int_{S^{n-1}}h_L\,d\nu-\int_{S^{n-1}}h_L\,d\widetilde{A}_{m,q}^{s}(\lambda f_0,\cdot).
$$
In particular, if $L_1,L_2\subset\R^n$ are convex bodies with $o\in L_1,L_2$, then
$$
\int_{S^{n-1}}(h_{L_1}-h_{L_2})\,d\nu=\int_{S^{n-1}}(h_{L_1}-h_{L_2})\,d\widetilde{A}_{m,q}^{s}(\lambda f_0,\cdot).
$$
Since differences of the form $(h_{L_1}-h_{L_2})$ for such $L_1,L_2$ are dense in $C(S^{n-1})$, we conclude
  \eqref{mu-nu-Psimq-optimization-nu}, completing the proof of Theorem~\ref{Amq-even-Minkowski-pair-measures}.
\end{proof}

\noindent{\bf Acknowledgements.} The authors would like to thank Gaoyong Zhang for helpful discussions. 
K\'aroly J. B\"or\"oczky   is supported by the NKKP Advanced grant 150613, Jinrong Hu is supported by the Austrian Science Fund (FWF)
10.55776/ESP1358925, and Jiaqian Liu is supported by  the National Natural Science Foundation of China (12401252). 
The third author would also like to thank the Alfr\'ed R\'enyi Institute of Mathematics for its warm hospitality during her visit. Part of the research was performed at a workshop at the Erd\H{o}s Center.

\end{document}